\documentclass[11pt]{article}
\usepackage{graphicx} 
\usepackage{lipsum}	
\usepackage{url}              
\usepackage[super]{nth}
\usepackage{longtable}

\usepackage{chemformula}
\usepackage{color} 
\usepackage[version=4]{mhchem}
\usepackage{siunitx}
\usepackage{chemfig}
\usepackage{subcaption}           
\usepackage{amssymb, amsmath}
\usepackage{mathtools}
\usepackage{enumerate} 
\usepackage{amsthm}
\usepackage{mathrsfs}
\usepackage{verbatim}
\newcommand{\iR}{\mathbb{R}}
\newcommand{\iN}{\mathbb{N}}
\newcommand{\iZ}{\mathbb{Z}}

\newcommand{\po}{\mathscr{P}_{1}}
\newcommand{\poo}{\mathscr{P}_{1,1}}
\newcommand{\pn}{\mathscr{P}^N}

\usepackage{bbm}

\usepackage{extarrows}
\usepackage{dsfont}
\usepackage{tikz}

\newcommand{\R}{\mathbb{R}}
\newcommand{\Z}{\mathbb{Z}}
\newcommand{\N}{\mathbb{N}}

\newcommand{\lno}{\left\lVert}
\newcommand{\rno}{\right\rVert}

\newcommand{\lk}{\left (}
\newcommand{\rk}{\right )}
\newcommand{\lkg}{\left \{}
\newcommand{\rkg}{\right \}}
\newcommand{\lv}{\left\lvert}
\newcommand{\rv}{\right\rvert}
\newcommand{\lke}{\left [}
\newcommand{\rke}{\right ]}
\newcommand{\loz}{{\ell_1(\Z)}}
\newcommand{\lozp}{{\ell_1^+(\Z)}}
\newcommand{\wloz}{{\ell_{1,1}(\Z)}}
\newcommand{\wlozp}{{\ell_{1,1}^+(\Z)}}
\newcommand{\rnoo}{\rno_1}
\newcommand{\rnooo}{\rno_{1,1}}

\newcommand{\skz}{\sum_{k\in\Z}}

\newtheorem{theorem}{Theorem}[section]
\newtheorem{proposition}[theorem]{Proposition}
\newtheorem{lemma}[theorem]{Lemma}
\newtheorem{cor}[theorem]{Corollary}
\theoremstyle{definition}
\newtheorem{definition}[theorem]{Definition}
\newtheorem{exa}[theorem]{Example}
\newtheorem{re}[theorem]{Remark}

\usepackage{hyperref}
\hypersetup{
pdftitle={Particle Dynamics Driven by Charge Exchange},
pdfauthor={Adrian Schmautz, Rico Zacher},
pdfkeywords={particle dynamics, charge exchange, detailed balance, equilibrium, relative entropy, stability},
colorlinks=false,
pdfborder=0 0 0	
}

\title{Particle dynamics driven by charge exchange}
\author{
  Adrian Schmautz\thanks{Institut für Angewandte Analysis, Universität Ulm, Helmholtzstraße 18, 89081 Ulm, Germany. \\E-mail: \texttt{adrian.schmautz@uni-ulm.de}}
  \and
  Rico Zacher\thanks{Institut für Angewandte Analysis, Universität Ulm, Helmholtzstraße 18, 89081 Ulm, Germany. \\E-mail: \texttt{rico.zacher@uni-ulm.de}}
}
\date{}
\begin{document}

\maketitle
\abstract{We introduce and analyse a mathematical model describing the dynamics of particles generated by charge-exchange interactions. The model extends the well-established exchange-driven growth model, previously studied in \cite{BNKr,EiSch,Es,EsVel,EDG}, by allowing for particle densities defined on the entire integer lattice.
Despite the many similarities between the two models, substantial differences arise both in their qualitative behaviour and in their mathematical analysis. Under suitable assumptions on the kernel in the collision operator, we establish global well-posedness in the class of nonnegative densities with finite first moment. Moreover, under a detailed balance condition, we investigate the structure of equilibria and analyse their stability by means of entropy methods.}

\medskip
\noindent {\bf AMS subjection classification:} 34A34, 37B25, 47J35, 82C05

\medskip
\noindent {\bf Keywords:} particle dynamics, charge exchange, detailed balance, equilibrium, relative entropy, stability

\section{Introduction}
In this paper we introduce and analyse a model which describes the dynamics of particles arising from charge exchange between particles. We assume that each particle has an integer charge and that two particles can interact with each other by one particle transferring a unit of charge to the other.
Denoting by $X_k$ a particle with charge $k$ ($k\in\Z)$, the interactions can be interpreted as chemical reactions 
\begin{align} \label{chemreact}
X_k+X_{l-1}\ce{ <=>[\text{$K(k,l-1)$}][\text{$K(l,k-1)$}]}X_l+X_{k-1}, \quad k,l\in\Z, 
\end{align}
where $K(k,l)$ is the rate of exchange of a single unit of charge from a particle of charge $k$ to a particle of charge $l$ ($k,l\in\Z$).
Denoting the density of particles with charge $k\in\Z$ at time $t\geq0$ by $f(t,k)$ and assuming mass-action kinetics for \eqref{chemreact} the particle system is described by the equation
\begin{gather}
\begin{aligned}\label{ODE1}
    \partial_t f(t,k)=&\sum_{l\in\Z}\Big(K(l,k-1)f(t,l)f(t,k-1)-K(k,l-1)f(t,k)f(t,l-1)\\&\quad-K(l,k)f(t,l)f(t,k)+K(k+1,l-1)f(t,k+1)f(t,l-1)\Big),
    \end{aligned}
\end{gather}
for $t>0$ and $k\in \iZ$, together with an initial condition
\begin{equation} \label{ODE1IC}
    f(0,k)=f_0(k),\quad k\in \iZ.
\end{equation}

The first and fourth term inside the bracket in \eqref{ODE1} represent the formation of particles with charge $k$ (either a particle with charge $k-1$ acquires a unit of charge or a particle with charge $k+1$ releases a unit of charge). The second and third term describe the loss of particles with charge $k$ (a particle with charge $k$ transfers/imports a unit of charge to/from another particle).

Our model can be viewed as an extension of the exchange-driven growth (EDG) model studied, among others, in \cite{BNKr,EiSch,Es,EsVel,EDG,SiGi}, which describes the dynamics of cluster growth through the exchange of single units called monomers, see also \cite{BdCPS} for a more general exchange-driven system.
In the EDG model, $f(t,k)$ represents the density of clusters of size $k\in \iN_0$ at time $t$. The difference to our model consists in that only nonnegative integers $k$ are considered. As noted in \cite{Es,EDG}, the EDG model itself is a generalisation of a model that is very close to the celebrated Becker-Döring (BD) equations, which were first proposed in \cite{BD} and later modified in \cite{PL}. This Becker-Döring type model corresponds to the chemical reaction network 
\begin{align} \label{chemreact2}
X_{k-1}+X_{1}\ce{ <=>[\text{$a_{k-1}$}][\text{$b_k$}]}X_k+X_{0}, \quad k\in \iN,
\end{align}
while the BD model has no $X_0$ species and considers the reactions
\begin{align} \label{chemreact3}
X_{k-1}+X_{1}\ce{ <=>[\text{$a_{k-1}$}][\text{$b_k$}]}X_k, \quad k\in \iN\setminus\{1\}.
\end{align}
For basic properties and results on the longtime behaviour of the  
Becker-D\"oring equations we refer to \cite{Ball88,Ball,CEL,Nh02,Sl1,Sl2}.

The charge exchange (CE) model \eqref{ODE1} possesses two conserved quantities:
\begin{itemize}
    \item the total mass $\sum_{k\in \iZ}f(t,k)$ (i.e.\ the total number of particles) and
    \item the total charge $\sum_{k\in \iZ}k f(t,k)$, 
\end{itemize}   
see Proposition~\ref{preservationprop}. In order to ensure that both quantities are well-defined for $f(t,\cdot)$ it is natural to choose the base space 
\begin{align*}
    \ell_{1,1}(\Z)\vcentcolon=\Big\{ g\in\loz:\lno g\rnooo\vcentcolon=\skz (1+\lv k\rv)\lv g(k)\rv<\infty\Big\}.
\end{align*}
For $g\in \loz$ we will denote the total mass by $m(g)$, that is,
$m(g)=\sum_{k\in \iZ}g(k)$. The total charge of $g\in  \ell_{1,1}(\Z)$ will be denoted by $q(g)$, that is, $q(g)=\sum_{k\in \iZ}k g(k)$.
Let us further introduce the absolute total charge $|q|(g)$ by
$|q|(g)\vcentcolon=\sum_{k\in \iZ}|k| g(k)$.

Instead of the charge of a particle, $k\in \iZ$ can also be thought of as another quantity that can take on positive and negative multiples of a certain elementary value such as money. In this sense, the CE model - just like the EDG model (see \cite{IKR}) - can be used to describe wealth exchange. Another interesting interpretation is to think of $(X_k,X_{-k})$ ($k\in \iN)$ as pairs of particles and antiparticles of a certain type $k$. 

Even though the CE model appears to be merely a simple extension of EDG from $\iN$ to $\iZ$, there are significant differences that make the analysis of EC considerably more difficult. For example, in the EDG case, the two conserved quantities 
$\sum_{k=0}^\infty f(t,k)$ (total number of particles) and 
$\sum_{k=0}^\infty k f(t,k)$ (total mass) with nonnegative $f$
immediately yield an a priori bound in the natural base space 
$\ell_{1,1}(\iN_0)$. This is no longer the case for CE as we obviously do not
get a bound for $\sum_{k\in \iZ}|k| f(t,k)$. 

At the particle level, this difference between the two models can be clearly illustrated as follows. Suppose we only have two particles and regard $k$ as position (i.e., as spatial variable). 
In the EDG case, it is not possible that one of the particles escapes to $k=\infty$, since if one particle moves a unit to the right, the other one has to make a step to the left. Due to the ``barrier'' at $k=0$, this process of moving away from each other has to stop after finitely many steps. In contrast, in the CE case, both particles can move to infinity, one to $+\infty$ and one to $-\infty$.

To illustrate this point even further, let us consider the CE model in the case of a constant kernel $K$, say $K=1$. Assuming that the total mass is equal to one, that is, $f(t,\cdot)$ is a probability density for all $t\ge 0$, equation \eqref{ODE1} amounts to the linear discrete diffusion equation on $\iZ$
\begin{equation} \label{DiscreteHeat}
\partial_t f(t,k)=f(t,k+1)-2f(t,k)+f(t,k-1).
\end{equation}
A brief formal calculation shows that
\begin{equation} \label{DerivAbsCharge}
\frac{d}{dt}|q|(f(t,\cdot))=\frac{d}{dt}\sum_{k\in \iZ}|k| f(t,k)=2f(t,0)\ge 0,
\end{equation}
that is, the absolute total charge is increasing. By means of the fundamental solution of \eqref{DiscreteHeat} one can also verify that $|q|(f(t,\cdot)\to \infty$ as $t\to \infty$, see the appendix. Under the same assumptions, in the EDG case, the situation is completely different. Here, for every initial density $f_0$ in $\ell_{1,1}(\iN_0)$, the corresponding solution converges
in $\ell_{1,1}(\iN_0)$ to an exponentially decaying equilibrium as $t\to \infty$, cf.\ \cite{EDG}.

We will now describe our main results. Throughout most of the article, we will assume that the kernel $K$ satisfies the following condition.
\begin{itemize}
\item [{\bf ($\mathcal{B}$)}] $K:\Z^2\rightarrow[0,\infty)$ and $\sup_{k,l\in\Z}K(k,l)\leq C_K$ for some $C_K\in (0,\infty)$.
\end{itemize}
Under this assumption, it is not difficult to show the local well-posedness of the initial-value problem \eqref{ODE1},\eqref{ODE1IC} in the spaces $\loz$ and $\ell_{1,1}(\Z)$, see Theorem \ref{LocalWP} below. Since we are mainly interested in densities we will then restrict ourselves to the positive cone of $\loz$ and $\wloz$, respectively, given by
\[
\lozp\vcentcolon=\lkg g\in\loz:g(k)\geq0\text{ for all }k\in\Z\rkg \text{ and}\;\;
    \wlozp\vcentcolon= \wloz\cap \lozp.
\]
For nonnegative initial values $f_0\in X$ with $X\in \{\loz,\wloz\}$, we establish global existence of nonnegative solutions in $C^1([0,\infty);X)$ and show that total mass 
and, if $X=\wloz$, also total charge are preserved, see Corollary  \ref{l1globex} and Theorem \ref{l11globex}. Positivity is shown by means of an abstract quasi-positivity result due to Volkmann \cite{Volkmann}.
Assuming in addition that the kernel is strictly positive in the sense that
\begin{itemize}
\item [{\bf ($\mathcal{P}$)}] $K(k,l)>0$ for all $k,l\in \iZ$,
\end{itemize}
then any solution starting from a nontrivial nonnegative initial value becomes instantaneously strictly positive on $\iZ$, cf.\ Proposition \ref{propfg0}.

For several technical reasons, it is very useful to have results which allow to approximate the solution of \eqref{ODE1},\eqref{ODE1IC} by a solution of a related finite-dimensional problem. Similarly as in the EDG case \cite{EDG},
we are able to prove that on any compact time interval $[0,T]$, any nonnegative $\wloz$-solution can be approximated arbitrarily well by the solution of a corresponding truncated system which lives on $S_N\vcentcolon=\{k\in\Z:|k|\leq N\}$ with sufficiently large $N$, see Theorem \ref{truncapproxc10T}.

Concerning long-time behaviour, in addition to ($\mathcal{P}$), we impose the crucial assumption that the kernel $K$ admits detailed balancing, i.e.\ we assume the following. 
\begin{itemize}
\item [{\bf ($\mathcal{DB}$)}] There exists $g:\,\iZ\to (0,\infty)$ such that  
\[
K(k,l-1)g(k)g(l-1)=K(l,k-1)g(l)g(k-1)\quad \text{for all}\; k,l\in \iZ.
\]
\end{itemize}
The detailed balance condition plays a key role in kinetic theory. In the context of chemical reaction systems, it states that at equilibrium, each reaction is exactly balanced by its reverse. Detailed balance provides a strong structural property that enables the use of entropy methods to prove stability and convergence to equilibrium. This condition has also been used in previous work on the BD equations and the EDG model.

The function $g$ in ($\mathcal{DB}$) is formally an equilibrium of \eqref{ODE1}, but like other existing equilibria, it does not necessarily belong to $\wloz$.  
In order to ensure existence of (detailed balanced) equilibria
in $\wloz$, we require additional assumptions on the kernel $K$. We assume the following here. 
\begin{itemize}
\item [{\bf ($\mathcal{E}$)}]
The limits    
    \begin{align*}
        \Lambda_+\vcentcolon=\lim_{j\rightarrow\infty}\frac{K(j,0)}{K(1,j-1)}\in (0,\infty],\quad 
        \Lambda_-\vcentcolon=\lim_{j\rightarrow-\infty}\frac{K(0,j)}{K(j+1,-1)}\in (0,\infty]
    \end{align*}
    exist and $ \mathbf{I}_K\vcentcolon=(\phi_-,\phi_+)\vcentcolon=\lk\lk \Lambda_-\kappa\rk^{-1},\Lambda_+\kappa\rk\neq\emptyset$, where 
    $\kappa\vcentcolon=\sqrt{\frac{K(1,-1)}{K(0,0)}}$.
\end{itemize}

Note that, by scaling, we may restrict ourselves to (nonnegative) solutions with total mass equal to one. The assumptions ($\mathcal{P}$), ($\mathcal{DB}$) and ($\mathcal{E}$) lead to a one-parameter family $f_\phi$ of (strictly positive) detailed balanced equilibria in $\wloz$ with total mass equal to one and parameter 
$\phi\in  \mathbf{I}_K$. Moreover, the total charge $q(f_\phi)$, considered as a function of $\phi$, is $C^1$ and strictly increasing in
$ \mathbf{I}_K$, see Lemma \ref{ddtqequi}. Given an $f_0\in \wlozp$ with total mass one and total charge $q$ such that
\begin{equation} \label{subcrit}
q\in \mathbf{J}_K\vcentcolon=\big(\lim_{\phi\to \phi_-}q(f_\phi),\lim_{\phi\to \phi_+}q(f_\phi)\big),
\end{equation}
there exists a unique $\phi\in  \mathbf{I}_K$ for which $f_\phi$ has the same total charge as $f_0$. This equilibrium is the natural candidate for the limit as $t\to \infty$ of the solution $f$ starting at $f_0$. In fact, one can show that, in this situation, there are no further equilibria with total mass one and total charge equal to $q(f_0)$. Proving that the solution indeed converges as $t\to \infty$
is a challenging task and seems to require further assumptions on the kernel. For the EDG model, such a convergence result could be established in \cite{EDG} under several additional assumptions on the kernel. In the present paper, we prove several basic properties with regard to long-time behaviour, in particular we show stability of the family
of equilibria $f_\phi$ for $\phi\in  \mathbf{I}_K$, see Theorem \ref{StabTheorem} below. Convergence to equilibrium will be studied in a forthcoming paper.

An important tool for studying the long-time behaviour is relative entropy. As already mentioned, the detailed balance condition ($\mathcal{DB}$) plays a central role in this context. Analogously to the EDG model and the Becker-D\"{o}ring system, we can show that the relative entropy of nonnegative $\wloz$-solutions with respect to any (strictly positive) detailed balance equilibrium of unit mass is nonincreasing. Applying this property to an exponentially decaying equilibrium yields an {\em a priori} bound for nonnegative $\wloz$-solutions in $\wloz$. This estimate constitutes a crucial step towards establishing compactness properties of the positive semiorbit associated with nonnegative solutions. We note that, for the BD equations and the EDG model, the corresponding estimate in ${\ell_{1,1}(\N)}$ and ${\ell_{1,1}(\N_0)}$, respectively, follows immediately from the conservation of total mass. 
Relative entropy, combined with suitable weighted Csisz\'{a}r-Kullback-Pinsker inequalities from \cite{BV}, also serves as the key ingredient in the proof of stability of the equilibria $f_\phi$ for $\phi\in  \mathbf{I}_K$ (Theorem \ref{StabTheorem}).

The situation in which \eqref{subcrit} holds is referred to as the {\em subcritical} case. In this regime, there exists
exactly one equilibrium in $\wloz$ with the same total mass and total charge. If $q>\lim_{\phi\to \phi_+}q(f_\phi)$ 
or $q<\lim_{\phi\to \phi_-}q(f_\phi)$ (which in particular means that at least one of the two limits is finite), then the system is in the {\em supercritical} regime. This case is more delicate analytically. In analogy with the Becker–D\"{o}ring equations and the EDG model -- where, in the supercritical regime, mass can be lost in the limit $t\to \infty$ -- we expect that some amount of charge can be likewise lost or gained at $t\to \infty$ and that supercritical solutions converge to a critical equilibrium in a weak sense. This is an interesting open problem and the subject of further investigation.

One of the aims of this work is also to demonstrate that certain results can be achieved through alternative methods.
For example, local well-posedness and preservation of positivity are directly proven for the infinite-dimensional problem \eqref{ODE1},\eqref{ODE1IC}, without passing through
a finite-dimensional truncation. Our arguments rely instead
on abstract functional analytic results. 

The paper is organised as follows. Section 2 is devoted to well-posedness, positivity, conserved quantities and scaling properties. In Section 3 we derive lower bounds for solutions and establish strict positivity. Section 4 addresses approximation by truncation. Section 5 discusses the detailed balance condition and derives the family $f_\phi$; in addition, we provide a characterisation of ($\mathcal{DB}$) in terms of an extended Becker-Döring condition. In Section 6, we establish various fundamental properties of the relative entropy with respect to detailed balance equilibria, in particular showing that it serves as a Lyapunov function. Section 7 is devoted to the stability of the equilibria $f_\phi$. Finally, the appendix collects
various auxiliary results.
%**********************************************************
\section{Well-posedness}
%************************************************************
Throughout this section we always assume {\bf($\mathcal{B}$)}.

For $f:\,\iZ\to \iR$ and $g\in\loz$, we define the function $Q(f,g):\,\iZ\to \iR$ by
\begin{equation} \label{QDEF}
\begin{split}
     Q(f,g)(k)&\vcentcolon=\sum_{l\in\Z}\Big(K(l,k-1)g(l)f(k-1)-K(k,l-1)f(k)g(l-1)\\&\quad\quad-K(l,k)g(l)f(k)+K(k+1,l-1)f(k+1)g(l-1)\Big),\;k\in \iZ.
\end{split}
\end{equation}
We also set $Q(f)\vcentcolon=Q(f,f)$ for $f\in\loz$. Observe that $Q(f,g)$
is well defined, thanks to condition ($\mathcal{B}$); see also Proposition \ref{basicpropQ}. Equation \eqref{ODE1} can then be rewritten as 
\begin{align}\label{ODE}
    \partial_t f(t,k)&=Q(f(t,\cdot))(k),\quad t>0,\,k\in \iZ.
\end{align}

The following proposition collects basic mapping properties of the operator $Q$.
%************************************************************
\begin{proposition} \label{basicpropQ}
(i) For $f,g\in\loz$ and $c:\,\iZ\to [0,\infty)$
    \begin{align*}
        \sum_{k\in\Z}c(k)\lv Q(f,g)(k)\rv\leq C_K\lno g\rno_1\sum_{k\in\Z}|f(k)|\lke c(k+1)+2c(k)+c(k-1)\rke.
    \end{align*}

\noindent (ii) Let $X\in \{\loz,\wloz\}$. Then for $f\in X$ and $g\in\loz$
    \begin{align} \label{basicprop2}
        \lno Q(f,g)\rno_X&\leq C_XC_K\lno f\rno_X\lno g\rno_1,
    \end{align}
    where $C_{X}=4$ if $X=\ell_1(\Z)$, and $C_{X}=6$ if $X=\wloz$.    

\medskip
\noindent (iii) Let $X\in \{\loz,\wloz\}$. For $f,g\in X$ there holds 
    \begin{align*}
        \lno Q(f)-Q(g)\rno_X&\leq C_{X}C_K\lk\lno f\rno_X+\lno g\rno_X\rk\lno f-g\rno_X.
    \end{align*}     
In particular, $Q:\,X\to X$ is Lipschitz continuous on bounded sets: for every $R>0$ there exists $L(R)>0$ such that for all
$f,g\in X$ with $\lno f \rno_X\le R$, $\lno g \rno_X\le R$ we have
\begin{align*}
        \lno Q(f)-Q(g)\rno_X&\leq L(R)\lno f-g\rno_X,
    \end{align*}
    and the map $R\mapsto L(R)$ is nondecreasing.
\end{proposition}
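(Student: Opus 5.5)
The plan is to prove (i) directly, by estimating each of the four terms in the definition \eqref{QDEF} of $Q(f,g)(k)$ separately, and then to obtain (ii) and (iii) as corollaries of (i) with suitable weights. For (i), use the triangle inequality and ($\mathcal{B}$) to bound each term by $C_K$ times a product of absolute values:
\begin{align*}
|Q(f,g)(k)|\le C_K\sum_{l\in\Z}\big(|g(l)||f(k-1)|+|f(k)||g(l-1)|+|g(l)||f(k)|+|f(k+1)||g(l-1)|\big).
\end{align*}
Summing over $l$ gives
\begin{align*}
|Q(f,g)(k)|\le C_K\lno g\rno_1\big(|f(k-1)|+2|f(k)|+|f(k+1)|\big).
\end{align*}
Then multiply by $c(k)\ge0$, sum over $k$, and shift the index in the first and last terms. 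This yields $\sum_k |f(k)|\,[c(k+1)+2c(k)+c(k-1)]$. All sums have nonnegative terms, so Tonelli justifies the rearrangements; this also shows that $Q(f,g)$ is well defined.

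For (ii), apply (i) with two choices of weight. With $c\equiv1$ the bracket equals $4$, so $C_X=4$ for $X=\loz$. With $c(k)=1+|k|$, use $|k\pm1|\le |k|+1$ to get
\begin{align*}
c(k+1)+2c(k)+c(k-1)\le 4+4|k|+2\le 6(1+|k|).
\end{align*}
This gives $C_X=6$ for $X=\wloz$.

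For (iii), use bilinearity of $Q$: since $Q(f,g)$ is linear in each argument,
\begin{align*}
Q(f)-Q(g)=Q(f-g,f)+Q(g,f-g).
\end{align*}
The first term is bounded by (ii) directly: $C_XC_K\lno f-g\rno_X\lno f\rno_1$. The second term has the difference sitting in the second slot, where (ii) only measures the $\ell_1$ norm, so it is bounded by $C_XC_K\lno g\rno_X\lno f-g\rno_1$. Since $\lno\cdot\rno_1\le\lno\cdot\rno_X$, both terms are at most $C_XC_K(\lno f\rno_X+\lno g\rno_X)\lno f-g\rno_X$, which is the claimed estimate. The local Lipschitz statement then follows with $L(R)=2C_XC_KR$, which is clearly nondecreasing in $R$. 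Taking $g=0$ also shows that $Q$ maps $X$ into $X$.

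I expect no real obstacle here. The only points needing care are the index shifts in the summation, which are justified by absolute convergence (Tonelli), and the splitting in (iii), where one must keep the difference $f-g$ in the slot that is measured in the $X$-norm.
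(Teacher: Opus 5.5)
Your proposal is correct and follows the paper's proof essentially step for step: you use the termwise bound via ($\mathcal{B}$), with rearrangement of nonnegative sums and an index shift, for (i); the weights $c\equiv 1$ and $c(k)=1+|k|$ for (ii); and bilinearity together with $\|\cdot\|_1\le\|\cdot\|_X$ for (iii), where your splitting $Q(f)-Q(g)=Q(f-g,f)+Q(g,f-g)$ simply makes the paper's one-line argument explicit. Your closing caveat about keeping $f-g$ in the $X$-measured slot is unnecessary: your own second term has it in the $\ell_1$-slot, which is harmless precisely because $\|\cdot\|_1\le\|\cdot\|_X$.
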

%***********************************************************
\begin{proof} (i) Let $f,g\in\loz$ and $c:\,\iZ\to [0,\infty)$. Then 
    \begin{align*}
    \sum_{k\in\Z}c(k)\lv Q(f,g)(k)\rv&\leq \sum_{k,l\in\Z}c(k)\lv g(l)\rv\big( K(l,k-1)|f(k-1)|+K(k,l)|f(k)|\\
    &\quad\quad\quad\quad\quad\quad+K(l,k)|f(k)|+K(k+1,l)|f(k+1)|\big)\\
    &\le C_K\lno g\rno_1\sum_{k\in\Z}c_k\big( |f(k+1)|+2|f(k)|+|f(k-1)|\big)\\
    &=C_K\lno g\rno_1\sum_{k\in\Z}|f(k)|\big( c(k+1)+2c(k)+c(k-1)\big),
\end{align*}
where we may rearrange the sums as every summand is nonnegative.

(ii)  We use (i). For $X=\loz$ we choose $c(k)=1$, $k\in\Z$, and obtain \eqref{basicprop2} with $C_\loz=4$. Moreover, the choice $c(k)=1+|k|$ results in \eqref{basicprop2} for $X=\wloz$ with $C_\wloz=6$, as 
\begin{align*}
    c(k+1)+c(k-1)&=1+|k+1|+1+|k-1|\\
    & \leq4+2|k|\leq 4(1+|k|)=4c(k).
\end{align*}

(iii) This follows from (ii), the bilinearity of $Q$ and the
inequality $\lno f\rno_1\le \lno f\rno_{1,1}$ for $f\in\wloz$.
\end{proof}

In what follows we will view \eqref{ODE} as an evolution equation in a Banach space $X$, where we are mainly interested in $X=\loz$ and $X=\wloz$. That is, we will study the initial value problem
\begin{equation}\label{P}
    \begin{cases}
        \dot{f}(t)&=\,Q(f(t)),\quad t>0,\\
        f(0)\!\!&=\,f_0,
    \end{cases}\tag{P}
\end{equation}
where $f_0\in X$ and $f$ depends on time and takes values in $X$.  

We have the following local well-posedness result for general initial data in $\loz$ and $\wloz$, respectively. 
%**********************************************************
\begin{theorem} \label{LocalWP}
Let $X\in \{\loz,\wloz\}$ and $f_0\in X$. Then the following assertions hold true.
\begin{itemize}
    \item [(i)] There exists a unique maximal ($X$-) solution $f=\varphi(\,\cdot\,;f_0)\in C^1([0,t^+(f_0));X)$ of \eqref{P}.
    \item [(ii)] If $t^+(f_0)<\infty$, then $\lim_{t\to t^+(f_0)}\lno f(t)\rno_X=\infty$.
    \item [(iii)] For every $T\in (0,t^+(f_0))$, there exists $\delta=\delta(f_0,T)>0$ such that $t^+(g_0)>T$ for all 
    $g_0\in \bar{B}_X(f_0,\delta)$. Moreover, the map
    \[
    \bar{B}_X(f_0,\delta)\to C^1([0,T];X),\quad g_0\mapsto \varphi(\,\cdot\,;g_0),
    \]
    is Lipschitz continuous.
\end{itemize}
\end{theorem}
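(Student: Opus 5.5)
This is the standard Picard--Lindelöf theory for ODEs in Banach spaces with a locally Lipschitz vector field. The only input needed from the paper is Proposition~\ref{basicpropQ}(iii). It shows that $Q:X\to X$ is Lipschitz on bounded sets, with nondecreasing constant $L(R)$. It also gives $\lno Q(f)\rno_X\le C_XC_K\lno f\rno_X^2$, by taking $g=0$ and using $Q(0)=0$. The plan is to obtain (i)--(iii) from this property alone, so the argument works verbatim for both $X=\loz$ and $X=\wloz$.

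\emph{Local existence and uniqueness.} For $f_0\in X$ set $R=2\lno f_0\rno_X$ (or $R=1$ if $f_0=0$). Choose $\tau>0$ with $\tau\, C_XC_KR^2\le \lno f_0\rno_X$ and $\tau L(R)\le 1/2$. On the closed set $\{f\in C([0,\tau];X):\ \sup_t\lno f(t)\rno_X\le R\}$ define the Picard map $\Phi(f)(t)=f_0+\int_0^tQ(f(s))\,ds$, a Bochner integral of a continuous function. This map is a self-map and a $1/2$-contraction, so it has a unique fixed point. Since $Q\circ f$ is continuous, the fixed point lies in $C^1$. For uniqueness among all $C^1([0,T];X)$ solutions, apply Gronwall to the difference on the set where both solutions are bounded by a common $R$. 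The local step size $\tau$ depends only on an upper bound for $\lno f_0\rno_X$. Gluing local solutions and taking the union of all existence intervals then yields the maximal solution $\varphi(\cdot\,;f_0)$ on $[0,t^+(f_0))$, which proves (i).

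\emph{Blow-up alternative.} Suppose $t^+<\infty$. The first step is to rule out $\lno f(t_n)\rno_X\le M$ along some sequence $t_n\to t^+$. If that happened, the uniform step size $\tau(M)$ would let us restart from $f(t_n)$ with $t_n+\tau(M)>t^+$, contradicting maximality. This gives only $\limsup=\infty$; to get the full limit as stated in (ii), we need more. If $\liminf<\infty$, there would be times where the norm is at most $M$, which we have just excluded. Hence $\liminf=\infty$ as well. Concretely, for every $M$ there is a neighbourhood of $t^+$ on which $\lno f\rno_X>M$, so $\lim_{t\to t^+}\lno f(t)\rno_X=\infty$.

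\emph{Continuous dependence.} Fix $T<t^+(f_0)$ and set $R_0=\max_{[0,T]}\lno f(t)\rno_X$ and $R=R_0+1$. Take $\delta=e^{-L(R)T}/2$ and let $g_0\in\bar B_X(f_0,\delta)$. Define $s^*$ as the supremum of those $s<\min(T,t^+(g_0))$ such that $\lno \varphi(t;g_0)\rno_X\le R$ on $[0,s]$. On $[0,s^*)$, Gronwall gives
\[
\lno \varphi(t;g_0)-\varphi(t;f_0)\rno_X\le e^{L(R)t}\lno g_0-f_0\rno_X\le \tfrac12 .
\]
So $\varphi(t;g_0)$ stays strictly inside the ball of radius $R$, and by (ii) and continuity $s^*$ cannot be less than $T$. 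Hence $t^+(g_0)>T$, because the solution can be continued a little beyond $T$ once it is bounded there. The same Gronwall bound applied to two initial data $g_0,h_0\in\bar B_X(f_0,\delta)$ gives Lipschitz continuity in $C([0,T];X)$ with constant $e^{L(R)T}$. For the $C^1$-norm, note that $\dot\varphi=Q(\varphi)$ and $Q$ is $L(R)$-Lipschitz on the ball, so the derivatives satisfy a Lipschitz bound with constant $L(R)e^{L(R)T}$.

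The main obstacle is the continuation/bootstrap argument in (iii), ensuring that $t^+(g_0)>T$ rather than just a bound on a common interval. Everything else is routine once Proposition~\ref{basicpropQ}(iii) is available.
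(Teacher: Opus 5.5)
Your proposal is correct and is essentially the paper's proof. The paper simply invokes the standard well-posedness theory for semilinear evolution equations with nonlinearities Lipschitz on bounded sets (trivial generator $A=0$, citing Lunardi and Schnaubelt), fed by Proposition~\ref{basicpropQ}(iii), and you have written that theory out explicitly. The one slip is cosmetic: the self-map condition should read $\tau C_XC_KR^2\le R-\lno f_0\rno_X$, since as written it cannot hold when $f_0=0$.
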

%************************************************************
\begin{proof}
The statements follow from the general well-posedness theory for abstract semilinear evolution equations with 
nonlinearities that are Lipschitz continuous on bounded sets, which in our case applies with the trivial semigroup generator $A=0$. We refer to \cite[Section 7.1]{Lun} and \cite[Theorem 3.4]{Schnau}.
\end{proof}
%**********************************************************

We next study various preservation properties of \eqref{ODE}. We say that $f\in\loz$ is \emph{symmetric}, if $f(k)=f(-k)$ for all $k\in\N$. Furthermore, \eqref{ODE} is said to \emph{preserve symmetry} if for every symmetric initial value $f_0\in \loz$, the corresponding solution $f(t)=\varphi(t;f_0)\in \loz$ (provided by Theorem \ref{LocalWP}) is symmetric for all $t\in [0,t^+(f_0))$.
%********************************************************
\begin{proposition} \label{partInt}
    Let $f,g\in \loz$ and $c:\,\iZ\to \iR$. Assume that
\begin{equation} \label{absconvass}
\sum_{k\in\Z}|c(k)f(k+\sigma)|<\infty\quad \mbox{for}\; \sigma\in\{-1,0,1\}.
\end{equation} 
Then there holds
    \begin{align*}
        \sum_{k\in\Z}c(k) Q(f,g)(k)=\sum_{k,l\in\Z}f(k)g(l)&\Big( K(l,k)\big[ c(k+1)-c(k)\big]\\&\quad+K(k,l)\big[ c(k-1)-c(k)\big]\Big).
    \end{align*}
\end{proposition}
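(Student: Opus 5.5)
The plan is to write $\sum_k c(k)Q(f,g)(k)$ as a double series over $(k,l)\in\Z^2$, split it into the four terms from \eqref{QDEF}, and shift the index $k$ in two of them. This moves every term onto the common factor $f(k)g(l)$, after which they combine into the claimed expression.

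The first step is to justify absolute convergence of each of the four double series separately. For instance,
\[
\sum_{k,l}|c(k)|K(l,k-1)|g(l)||f(k-1)|\le C_K\lno g\rno_1\sum_k|c(k)f(k-1)|<\infty
\]
by ($\mathcal{B}$) and \eqref{absconvass} with $\sigma=-1$. The same bound handles the other three terms, using $\sigma=0$ for the second and third and $\sigma=1$ for the fourth. In the second and fourth terms $g$ appears as $g(l-1)$, which is harmless: re-indexing $l$ does not change $\lno g\rno_1$. This estimate is essentially the computation in Proposition \ref{basicpropQ}(i) with $c$ replaced by $|c|$, except that here we keep $|c(k)f(k+\sigma)|$ rather than moving the shift onto $c$. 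Because each of the four double series converges absolutely, we may split $\sum_k c(k)Q(f,g)(k)$ into four double sums and freely reorder and reindex each of them (Fubini for series).

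Next, reindex each term to the form $f(k)g(l)\times(\dots)$:
\begin{itemize}
\item Term 1, $\sum c(k)K(l,k-1)g(l)f(k-1)$: substitute $k\mapsto k+1$ to get $\sum c(k+1)K(l,k)f(k)g(l)$.
\item Term 3 is already $-\sum c(k)K(l,k)f(k)g(l)$.
\item Term 2, $-\sum c(k)K(k,l-1)f(k)g(l-1)$: substitute $l\mapsto l+1$ to get $-\sum c(k)K(k,l)f(k)g(l)$.
\item Term 4, $\sum c(k)K(k+1,l-1)f(k+1)g(l-1)$: substitute $k\mapsto k-1$ and $l\mapsto l+1$ to get $\sum c(k-1)K(k,l)f(k)g(l)$.
\end{itemize}
Adding terms 1 and 3 gives $K(l,k)[c(k+1)-c(k)]$, and adding terms 2 and 4 gives $K(k,l)[c(k-1)-c(k)]$, which is the claimed identity.

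The only delicate point is the convergence bookkeeping. Splitting the sum into four pieces is legitimate only because each piece converges absolutely on its own, and that is exactly what hypothesis \eqref{absconvass} provides. The final combined series also converges absolutely, since it is a sum of absolutely convergent series. No cancellation between terms needs to be exploited before the sums are separated, so the argument is routine once this check is made.
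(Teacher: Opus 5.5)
Your proof is correct and is the same as the paper's: you use ($\mathcal{B}$), $g\in\loz$ and \eqref{absconvass} to get absolute convergence of each of the four double sums separately, then rearrange and shift indices. You write out the bounds and the four substitutions explicitly, which the paper leaves implicit.
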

%********************************************************
\begin{proof}
Inserting the definition of $Q$ in $\sum_{k\in\Z}c(k) Q(f,g)(k)$, assumption \eqref{absconvass}, $g\in \loz$, and ($\mathcal{B}$) ensure the absolute convergence of each of the four double sums which arise when applying the triangle inequality to the big bracket in \eqref{QDEF}. This allows us to rearrange the terms appropriately, which, combined with a few index shifts, easily leads to the assertion.   
\end{proof} 
%*************************************************************
%************************************************************
\begin{proposition}\label{preservationprop}
Let $X\in \{\loz,\wloz\}$, $f_0\in X$, and let $f$ be the corresponding $X$-solution of \eqref{P} on $J\vcentcolon=[0,t^+(f_0))$. Set $X^+=\{g\in X:\,g(k)\ge 0,\,k\in \Z\}$. Then the following statements hold true.
     \begin{itemize}
        \item [(i)] Positivity is preserved: If $f_0\in X^+$, then $f(t)\in X^+$ for all $t\in J$.
         \item [(ii)] Total mass is preserved: $m(f(t))=m(f_0)$ for all $t\in J$.
         \item [(iii)] If $X=\wloz$, then total charge is preserved:  $q(f(t))=q(f_0)$ for all $t\in J$.
        \item [(iv)] If $K(k,l)=K(-l,-k)$ for all $k,l\in\Z$, then symmetry is preserved. 
     \end{itemize}
\end{proposition}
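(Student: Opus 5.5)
We treat the four assertions separately. Items (ii) and (iii) follow from Proposition~\ref{partInt}, item (iv) from uniqueness in Theorem~\ref{LocalWP}, and item (i) from a quasi-positivity criterion in the spirit of Volkmann~\cite{Volkmann}.

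For (ii), we apply Proposition~\ref{partInt} with $c\equiv 1$, $f=g=f(t)\in\loz$. Condition \eqref{absconvass} holds trivially, and both brackets $c(k\pm1)-c(k)$ vanish, so $m(Q(f(t)))=0$. Since $m:\loz\to\iR$ is a bounded linear functional and $f\in C^1(J;\loz)$, we get
\[
\tfrac{d}{dt}m(f(t))=m(\dot f(t))=m(Q(f(t)))=0,
\]
hence $m(f(t))=m(f_0)$.

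For (iii), we take $c(k)=k$ and $f(t)\in\wloz$. Condition \eqref{absconvass} holds because $\sum_k|k||f(k+\sigma)|\le\lno f\rnooo$ for $\sigma\in\{-1,0,1\}$. The brackets become $+1$ and $-1$, so
\[
q(Q(f))=\sum_{k,l}f(k)f(l)\bigl(K(l,k)-K(k,l)\bigr).
\]
This double sum is absolutely convergent by ($\mathcal B$), and swapping $k\leftrightarrow l$ shows it equals its own negative, so it is zero. Since $q$ is a bounded linear functional on $\wloz$, the same differentiation argument gives conservation of total charge.

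For (iv), we let $R f(k):=f(-k)$, which is an isometry on both $X=\loz$ and $X=\wloz$. We check that $Q(Rf)=R\,Q(f)$ under the hypothesis $K(k,l)=K(-l,-k)$. To do so, we write out $Q(Rf)(k)$, substitute $l\to -l$ or $l\to -l+1$ as appropriate in each of the four terms, and use the kernel symmetry. The gain term $K(l,k-1)f(-l)f(-k+1)$ should match the term $K(-k+1,-l)f(-k+1)f(-l)$ of $Q(f)(-k)$, and the other three terms pair up in the same way; this index bookkeeping is routine. Once the identity holds, $Rf$ is again an $X$-solution, with initial value $Rf_0=f_0$. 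Uniqueness in Theorem~\ref{LocalWP} then gives $Rf=f$.

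For (i), which is the main obstacle, we use that $X^+$ is a closed convex cone. Volkmann's theorem then says it suffices that $Q$ is locally Lipschitz, which Proposition~\ref{basicpropQ}(iii) provides, and that $Q$ is quasi-positive on $X^+$. Quasi-positivity here means: for $f\in X^+$ and every bounded linear functional $\ell\ge0$ on $X$ with $\ell(f)=0$, one has $\ell(Q(f))\ge0$. Since $X$ is a sequence space with dual $\ell_\infty$ (weighted by $1+|k|$ for $\wloz$), such an $\ell$ is given by a nonnegative weight $w$ supported on the zero set $Z=\{k: f(k)=0\}$. For $k\in Z$, the two loss terms in $Q(f)(k)$ contain the factor $f(k)=0$, so $Q(f)(k)$ reduces to the two gain terms, which are nonnegative. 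Hence $\ell(Q(f))=\sum_{k\in Z}w(k)Q(f)(k)\ge0$.

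If this form of the abstract result is not available, the fallback is a direct argument. Write $Q(f)(k)=G(f)(k)-f(k)L(f)(k)$, where
\[
L(f)(k)=\sum_l\bigl(K(k,l-1)f(l-1)+K(l,k)f(l)\bigr)\le 2C_K\lno f\rno_1 .
\]
For large $\lambda$, the modified field $f\mapsto Q(f)+\lambda f$ is order-preserving near bounded sets. One then runs a Picard iteration for
\[
f(t)=e^{-\lambda t}f_0+\int_0^t e^{-\lambda(t-s)}\bigl(Q(f(s))+\lambda f(s)\bigr)\,ds,
\]
using a positive-part cutoff $f^+$ inside $Q$, and finally uses uniqueness to remove the cutoff.
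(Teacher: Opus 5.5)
Your proposal is correct and follows the paper's route: (ii) and (iii) come from Proposition~\ref{partInt} with $c\equiv1$ and $c(k)=k$, (iv) from the commutation $Q(Rf)=R\,Q(f)$ plus uniqueness, and (i) from Volkmann's invariance theorem combined with quasi-positivity of $Q$. The only differences are cosmetic. In (i) you describe the admissible functionals through the (weighted) $\ell_\infty$ dual and take the cone reduction ($\varphi\ge0$ on $X^+$, $\varphi(x)=0$) for granted. The paper proves that reduction explicitly and then expands $\varphi(Q(x))=\sum_k Q(x)(k)\varphi(e_k)$. Your fallback Picard argument is not needed.
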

%*****************************************************************
\begin{proof}
 (i) For $k\in\Z$, let $e_k\in X$ be given by $e_k(l)=\delta_{lk}$ (Kronecker symbol), $l\in \iZ$. Observe that $Q$ is quasi-positive, that is, for all $g\in X^+$ and $k\in \iZ$ we have that $g(k)=0$ implies $Q(g)(k)\geq0$. Hence, the claim follows from Proposition~\ref{quasipositivitypositivitypreserving} and Proposition \ref{basicpropQ} (iii). Note that $\sum_{k\in\Z}y_k e_k=y$ for all $y\in X$. 

(ii) Applying Proposition \ref{partInt} with $c(k)=1$,
we see that $\sum_{k\in\Z}Q(g)(k)=0$ for all $g\in \loz$, and thus
    \begin{align*}
        \frac{d}{dt}m(f(t))=\frac{d}{dt}\sum_{k\in\Z}f(t,k)=\sum_{k\in\Z}\partial_t f(t,k)=\sum_{k\in\Z}Q(f(t))(k)=0,\quad t\in J,
    \end{align*}
    as $f\in C^1(J;\loz)$ is a solution of \eqref{P}.
    Here we may interchange summation and differentiation by Lebesgue's theorem on differentiation of parameter integrals.

(iii) Employing Proposition \ref{partInt} with $c(k)=k$, $k\in \iZ$, we see that for every $g\in \wloz$
\[
\sum_{k\in\Z}kQ(g)(k)=\sum_{k,l\in\Z}g(k)g(l)\big( K(l,k)-K(k,l)\big)=0.
\]
Consequently,
    \begin{align*}
        \frac{d}{dt}q(f(t))=\frac{d}{dt}\sum_{k\in\Z}kf(t,k)=\sum_{k\in\Z}k\partial_t f(t,k)=\sum_{k\in\Z}kQ(f(t))(k)=0,\quad t\in J,
    \end{align*}
by the solution property of $f$, now in the space $X=\wloz$.    

(iv) Assume that $K(k,l)=K(-l,-k)$ for all $k,l\in\Z$ and let $h\in \loz$. Setting $g(k)\vcentcolon=h(-k)$, $k\in \iZ$, we claim that 
\begin{equation} \label{symmrule}
Q(h)(-k)=Q(g)(k),\quad k\in \iZ.
\end{equation}
Indeed, we have for $k\in \iZ$
\begin{align*}
        Q(h)(-k)
            &=\sum_{l\in\Z}\Big(K(l,-k-1)h(l)h(-k-1)-K(-k,l-1)h(-k)h(l-1)\\&\quad\quad-K(l,-k)h(l)h(-k)+K(-k+1,l-1)h(-k+1)h(l-1)\Big)\\
            &=\sum_{l\in\Z}\Big(K(l,-k-1)g(-l)g(k+1)-K(-k,l-1)g(k)g(-l+1)\\&\quad\quad-K(l,-k)g(-l)g(k)+K(-k+1,l-1)g(k-1)g(-l+1)\Big)\\
            &=\sum_{l\in\Z}\Big(K(k+1,-l)g(-l)g(k+1)-K(-l+1,k)g(k)g(-l+1)\\&\quad\quad-K(k,-l)g(-l)g(k)+K(-l+1,k-1)g(k-1)g(-l+1)\Big)\\
            &=Q(g)(k).
        \end{align*}
Suppose now that $f_0\in X$ is symmetric. We show that $g(t,k)\vcentcolon=f(t,-k)$, $t\in J$, $k\in \iZ$, is also a solution of \eqref{P}. Clearly,
$g(0,k)=f(0,-k)=f_0(-k)=f_0(k)$, by symmetry of $f_0$. Further,
\begin{align*}
    \partial_t g(t,k)=\partial_t f(t,-k)=Q(f(t))(-k)=Q(g(t))(k),\quad t\in J,\,k\in \iZ,
\end{align*}
where we use \eqref{symmrule}. By uniqueness, it follows that $g=f$.
Hence $f$ has the claimed symmetry property.
\end{proof}

Our next objective is to prove global existence for nonnegative initial data. For $X=\loz$, we immediately obtain this result by
combining Theorem \ref{LocalWP} and Proposition \ref{preservationprop}.
%*******************************************************
\begin{cor} \label{l1globex}
    Let $f_0\in \lozp$. Then the corresponding ($\loz$-) solution $f$ of \eqref{P} exists for all $t\ge 0$, is nonnegative, and $m(f(t))=m(f_0)$, $t\in [0,\infty)$. 
\end{cor}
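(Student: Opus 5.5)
The plan is to combine the local theory of Theorem \ref{LocalWP} with the preservation results of Proposition \ref{preservationprop}, taking $X=\loz$ throughout. Let $f_0\in\lozp$ and let $f=\varphi(\cdot\,;f_0)\in C^1([0,t^+(f_0));\loz)$ be the maximal solution from Theorem \ref{LocalWP}(i). It suffices to show $t^+(f_0)=\infty$; nonnegativity and mass conservation then hold on $[0,\infty)$ by the same results.

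First, by Proposition \ref{preservationprop}(i) with $X=\loz$, we have $f(t)\in\lozp$ for all $t\in[0,t^+(f_0))$. Second, by Proposition \ref{preservationprop}(ii), $m(f(t))=m(f_0)$ on the same interval. The key observation is that on the positive cone the $\loz$-norm coincides with the total mass:
\[
\lno f(t)\rno_1=\skz |f(t,k)|=\skz f(t,k)=m(f(t))=m(f_0)=\lno f_0\rno_1 .
\]
Hence $\lno f(t)\rno_1$ stays bounded, and in fact constant, on $[0,t^+(f_0))$.

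Now argue by contradiction. If $t^+(f_0)<\infty$, the blow-up alternative in Theorem \ref{LocalWP}(ii) would give $\lno f(t)\rno_1\to\infty$ as $t\to t^+(f_0)$, contradicting the uniform bound just obtained. Therefore $t^+(f_0)=\infty$, and the statements on nonnegativity and mass conservation extend to all $t\ge0$.

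There is essentially no obstacle. The one point to check is that positivity in Proposition \ref{preservationprop}(i) is available on the whole maximal interval, so that $|f|=f$ holds before any possible blow-up time. This is exactly how that proposition is stated, so no separate argument is needed.
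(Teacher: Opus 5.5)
Your proposal is correct and follows the paper's own argument: positivity and mass conservation from Proposition \ref{preservationprop} give $\lno f(t)\rno_1=m(f_0)$, which rules out blow-up, and Theorem \ref{LocalWP}(ii) then yields $t^+(f_0)=\infty$. Your extra remark that positivity is available on the whole maximal interval is exactly the point needed for the identity $\lno f(t)\rno_1=m(f(t))$.
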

%********************************************************
\begin{proof}
    Preservation of positivity and of total mass ensure that $f(t)$ does not blow up in $\loz$ in finite time. Theorem \ref{LocalWP}(ii) then implies existence for all $t\ge 0$.  
\end{proof}

Since for $X=\wloz$, the preservation properties do not provide an a priori bound in $X$, we need further consideration to get a suitable estimate for the absolute total charge of $f$.
%*************************************************************
\begin{lemma}\label{ddtabsqineq}
    Let $f_0\in\wlozp$ and $f$ be the corresponding ($\wloz$-) solution to \eqref{P} on $[0,t^+(f_0))$. Then
\begin{align} \label{l11estimate}
        \lno f(t)\rno_{1,1}\leq\lno f_0\rnooo+2C_Km(f_0)^2t,\quad t\in [0,t^+(f_0)).
    \end{align}
\end{lemma}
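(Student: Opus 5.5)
Since $f(t)\ge 0$ by Proposition~\ref{preservationprop}(i), we have $\lno f(t)\rnooo = m(f(t)) + |q|(f(t)) = m(f_0) + |q|(f(t))$ by mass conservation (Proposition~\ref{preservationprop}(ii)). It therefore suffices to show
\[
\frac{d}{dt}|q|(f(t)) \le 2C_K m(f_0)^2, \qquad t\in[0,t^+(f_0)).
\]
Integrating this inequality from $0$ to $t$ gives \eqref{l11estimate}, because $|q|(f_0)\le \lno f_0\rnooo - m(f_0)$; in fact the equality $\lno f_0\rnooo = m(f_0)+|q|(f_0)$ gives the result directly.

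\textbf{Step 1: differentiate the absolute charge.} Since $f\in C^1([0,t^+);\wloz)$ and the functional $h\mapsto\sum_k |k|h(k)$ is bounded and linear on $\wloz$, we get
\[
\frac{d}{dt}|q|(f(t))=\sum_k |k|\,Q(f(t))(k).
\]
We apply Proposition~\ref{partInt} with $c(k)=|k|$. Assumption \eqref{absconvass} holds because $f(t)\in\wloz$ and $|k|\le |k\pm1|+1$. This yields
\[
\frac{d}{dt}|q|(f)=\sum_{k,l}f(k)f(l)\Big(K(l,k)\big[|k+1|-|k|\big]+K(k,l)\big[|k-1|-|k|\big]\Big).
\]

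\textbf{Step 2: bound the brackets.} We have $|k+1|-|k|\le 1$ and $|k-1|-|k|\le 1$ for all $k$. The first is negative ($=-1$) for $k\le -1$, and the second is $-1$ for $k\ge 1$. Using nonnegativity of $f$ and $K\le C_K$, the crude bound gives
\[
\frac{d}{dt}|q|(f)\le \sum_{k,l}f(k)f(l)\big(K(l,k)+K(k,l)\big)\le 2C_K\, m(f)^2 .
\]
This is exactly the required constant, so dropping the negative contributions costs nothing. Mass conservation then replaces $m(f(t))$ by $m(f_0)$.

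\textbf{Step 3: justify and integrate.} Since the right-hand side of the derivative bound is constant, integration over $[0,t]$ yields \eqref{l11estimate}. The only potential subtlety is the legitimacy of the interchange in Step 1 and of the rearrangement in Proposition~\ref{partInt}. Both follow from $f(t)\in\wloz$, which holds on the whole existence interval, and from ($\mathcal{B}$). I expect no serious obstacle: the main point is simply choosing $c(k)=|k|$ and observing the one-sided bound $|k\pm1|-|k|\le 1$.
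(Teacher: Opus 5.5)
Your proof is correct and follows the paper's argument: Proposition~\ref{partInt} with $c(k)=|k|$, the bound $|k\pm1|-|k|\le 1$ together with ($\mathcal{B}$), positivity and mass conservation, and then integration in time. The only cosmetic difference is that you use a one-sided upper bound on $\frac{d}{dt}|q|(f(t))$, whereas the paper bounds the absolute value of $\frac{d}{dt}\lno f(t)\rnooo$ via $\big||k\pm1|-|k|\big|\le 1$; both give the same constant $2C_Km(f_0)^2$.
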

%*****************************************************
\begin{proof}
    By Proposition \ref{partInt} with $c(k)=|k|$ and using preservation of positivity and total mass we find that for $t\in J\vcentcolon=[0,t^+(f_0))$
    \begin{align*}
        \Big|\frac{d}{dt} &\lno f(t)\rno_{1,1}\Big|=\Big|\sum_{k\in\Z}|k|Q(f(t))(k)\Big|\\&=\Big|\sum_{k,l\in\Z}f(t,k)f(t,l)\Big(K(l,k)(|k+1|-|k|)+K(k,l)(|k-1|-|k|)\Big)\Big|\\
        &\leq \sum_{k,l\in\Z}f(t,k)f(t,l)\Big(K(l,k)\big||k+1|-|k|\big|+K(k,l)\big||k-1|-|k|\big|\Big)\\
        &\leq \sum_{k,l\in\Z}2C_K f(t,k)f(t,l)=2C_Km(f_0)^2,
    \end{align*}
    where we may differentiate under the sum since $f\in C^1(J;\wloz)$ solves \eqref{P}. With this bound on the time derivative 
    of $\lno f(t)\rno_{1,1}$, the assertion
    \eqref{l11estimate} follows directly by integration.
\end{proof}
%**********************************************************
Lemma \ref{ddtabsqineq} shows that for nonnegative initial data in $\wloz$, the associated solution $f$ cannot blow up in $\wloz$ in finite time. So, together with Theorem \ref{LocalWP}, we obtain the following main result of this section.
%*******************************************************
\begin{theorem} \label{l11globex}
    Let $f_0\in \wlozp$. Then \eqref{P} possesses a unique $\wloz$-solution $f$, which exists for all $t\ge 0$, is nonnegative, and satisfies $m(f(t))=m(f_0)$ and $q(f(t))=q(f_0)$, $t\in [0,\infty)$. 
\end{theorem}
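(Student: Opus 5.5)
The proof combines the local theory of Theorem \ref{LocalWP} with the a priori estimate of Lemma \ref{ddtabsqineq} and the preservation properties of Proposition \ref{preservationprop}.

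Fix $f_0\in\wlozp$ and take $X=\wloz$ in Theorem \ref{LocalWP}(i). This gives a unique maximal solution $f=\varphi(\,\cdot\,;f_0)\in C^1([0,t^+(f_0));\wloz)$ of \eqref{P}; uniqueness in $\wloz$ is part of that statement. By Proposition \ref{preservationprop}(i) with $X=\wloz$, $f(t)\in\wlozp$ for all $t\in[0,t^+(f_0))$. Parts (ii) and (iii) of the same proposition give $m(f(t))=m(f_0)$ and $q(f(t))=q(f_0)$ on this interval.

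The key step is to rule out $t^+(f_0)<\infty$. Suppose it were finite. By Theorem \ref{LocalWP}(ii) we would have $\lno f(t)\rnooo\to\infty$ as $t\to t^+(f_0)$. But Lemma \ref{ddtabsqineq}, which applies because $f$ is nonnegative, gives
\[
\lno f(t)\rnooo\le \lno f_0\rnooo+2C_Km(f_0)^2t\le \lno f_0\rnooo+2C_Km(f_0)^2t^+(f_0)<\infty
\]
for all $t<t^+(f_0)$. This contradicts the blow-up, so $t^+(f_0)=\infty$. The nonnegativity and the two conservation laws then hold on all of $[0,\infty)$.

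All the substantive work was done in the earlier results, so I expect no real obstacle. The one point to check is that the estimate \eqref{l11estimate} is used on $[0,t^+(f_0))$, where the solution is known to exist in $\wloz$ and to be nonnegative; this is exactly the hypothesis of Lemma \ref{ddtabsqineq}. It also justifies differentiating $\lno f(t)\rnooo$ termwise, as is done in the proof of that lemma.
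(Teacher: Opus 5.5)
Your proof is correct and follows the paper's argument exactly: Theorem \ref{LocalWP} for the unique maximal solution, Proposition \ref{preservationprop} for nonnegativity and conservation of mass and charge, and the linear-in-time bound of Lemma \ref{ddtabsqineq} to rule out the blow-up alternative in Theorem \ref{LocalWP}(ii). Nothing is missing.
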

%********************************************************

At the end of the section we look at the scaling properties of equation \eqref{ODE}.
%*************************************************************
\begin{re} \label{scaling}
Suppose that $f$ is a solution of \eqref{ODE} on the interval $[0,T]$. For $\lambda>0$, set $K_\lambda(k,l)\vcentcolon=\lambda K(k,l)$, $k,l\in \iZ$, and denote the collision operator associated with $K_\lambda$ by $Q_\lambda$. For $\gamma,\lambda>0$ define
\[
g(t,k)=\frac{1}{\lambda}f(\gamma t,k),\quad t\in \big[0,\frac{T}{\gamma}\big],\,k\in \iZ.
\]
Then it is easy to check that
\[
\partial_t g(t,k)=Q_{\gamma \lambda}(g(t,\cdot))(k),\quad t\in \big[0,\frac{T}{\gamma}\big],\,k\in \iZ.
\]
In particular, if $\gamma \lambda=1$, then $g$ is again a solution of \eqref{ODE}.  
\end{re}
%*************************************************************
\section{Lower bounds and strict positivity}
%************************************************************
The purpose of this section is to establish lower bounds for solutions, which in particular imply instantaneous strict positivity.
%**************************************************************
\begin{proposition}\label{propfdec}
    Assume {\bf ($\mathcal{B}$)}. Let $t_0\geq0$, $f_0\in \lozp$ and $f$ be the corresponding (nonnegative) solution to \eqref{P}. Then
    \begin{align*}
        f(t,k)\geq e^{-2C_Km(f_0)(t-t_0)}f(t_0,k),\quad t\ge t_0,\,k\in \Z.
    \end{align*}
\end{proposition}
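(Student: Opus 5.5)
The plan is to split $Q(f)(k)$ into a nonnegative gain part and a loss part that is linear in $f(k)$, and then apply a Grönwall-type argument to each component separately. By Corollary~\ref{l1globex}, the solution $f$ exists globally, is nonnegative, and satisfies $m(f(t))=m(f_0)$ for all $t\ge 0$. From the definition \eqref{QDEF} we write
\begin{equation*}
Q(f(t))(k)=G(t,k)-a(t,k)f(t,k),
\end{equation*}
where
\begin{equation*}
G(t,k)=\sum_{l\in\Z}\big(K(l,k-1)f(t,l)f(t,k-1)+K(k+1,l-1)f(t,k+1)f(t,l-1)\big)\ge 0
\end{equation*}
by nonnegativity of $K$ and $f$, and
\begin{equation*}
a(t,k)=\sum_{l\in\Z}\big(K(k,l-1)f(t,l-1)+K(l,k)f(t,l)\big).
\end{equation*}
Using ($\mathcal{B}$), nonnegativity and conservation of mass, we get $0\le a(t,k)\le 2C_K m(f(t))=2C_Km(f_0)$ for all $t\ge0$ and $k\in\Z$.

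Next we fix $k$ and look at the scalar function $t\mapsto f(t,k)$. It is $C^1$, because evaluation at $k$ is a bounded linear functional on $\loz$ and $f\in C^1([0,\infty);\loz)$. Its derivative satisfies
\begin{equation*}
\partial_t f(t,k)=G(t,k)-a(t,k)f(t,k)\ge -2C_Km(f_0)f(t,k),
\end{equation*}
where we used $G\ge 0$, $f(t,k)\ge 0$ and the bound on $a$. Setting $\lambda=2C_Km(f_0)$, this gives $\partial_t\big(e^{\lambda(t-t_0)}f(t,k)\big)\ge 0$, so $e^{\lambda(t-t_0)}f(t,k)$ is nondecreasing on $[t_0,\infty)$. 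Integrating from $t_0$ to $t$ yields the claim.

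The only point needing a little care is that $a(t,k)$ is finite and the splitting of the series is legitimate. Both hold because each of the four series in \eqref{QDEF} converges absolutely, since $f(t)\in\loz$ and $K$ is bounded. No step is a real obstacle.
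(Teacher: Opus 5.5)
Your proof is correct and is essentially the paper's argument. The paper also uses nonnegativity, mass conservation and ($\mathcal{B}$) to get $\partial_t f(t,k)\ge -2C_Km(f_0)f(t,k)$ and then concludes by Gronwall; your gain/loss splitting just spells out the one-line estimate the paper leaves implicit.
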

\begin{proof}
    Mass conservation and assumption ($\mathcal{B}$) imply
    \begin{align*}
        \partial_t f(t,k)=Q(f(t))(k)\geq -2C_Km(f_0)f(t,k)
    \end{align*}
    for every $t\geq0$ and all $k\in\Z$, which, by Gronwall's lemma, yields the assertion.
\end{proof}
This means that if $f(t,k)>0$ for some $t\geq0$, then $f(s,k)>0$ for all $s\geq t$.
%******************************************************************
\begin{proposition}\label{propfg0}
Assume {\bf ($\mathcal{B}$)}. Let $f_0\in \lozp$ with $m(f_0)>0$ and $f$ be the corresponding solution to \eqref{P}. Then the following statements hold true.
\begin{enumerate}[(i)]
    \item\label{propfg0i} If condition {\bf ($\mathcal{P}$)} holds, then $f(t,k)>0$ for all $t>0$ and all $k\in\Z$.
    \item\label{propfg0ii} Assume in addition that $K(k,l)\geq c_K$ for all $k,l\in\Z$ with some $c_K>0$. Let $(a_n)_{n\in\N}\subset(0,\infty)$ such that $\sigma\vcentcolon=\sum_{j=1}^\infty a_j<\infty$, $\sigma_n\vcentcolon=\sum_{j=1}^n a_j$, $n\in \N$, $\sigma_0\vcentcolon=0$, $t_0>0$ and let $f_0(k)>0$ for some fixed $k\in \Z$. Then
        \begin{align}\label{fgeq1eq}
            f(t,k+l)\geq \lk c_K m(f_0)\rk^{|l|}f_0(k)e^{-2C_Km(f_0)t}\prod_{j=1}^{|l|}\lk 2C_K+\frac{\sigma}{t_0a_{j}}\rk^{-1}
        \end{align}
        for all $l\in\Z$ and all $t\geq t_0\frac{\sigma_{|l|}}{\sigma}$. 
        If in addition $(a_n)_{n\in\N}$ is monotonically nonincreasing, then in particular one has
        \begin{align}\label{fgeq2eq}
            f(t,k+l)\geq \lk c_K m(f_0)\rk^{|l|}f_0(k)e^{-2C_Km(f_0)t}\lk 2C_K+\frac{\sigma}{t_0a_{|l|}}\rk^{-|l|}
        \end{align}
        for all $l\in\Z$ and all $t\geq t_0\frac{\sigma_{|l|}}{\sigma}$.
\end{enumerate}
\end{proposition}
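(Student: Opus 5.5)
The plan is to compare each component of the solution with a scalar linear differential inequality and to propagate positivity one lattice site at a time. Throughout, write $m\vcentcolon=m(f_0)>0$ and recall from Corollary \ref{l1globex} that $f$ is nonnegative, global, and $m(f(t))=m$.

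\textbf{Splitting $Q$ into gain and loss.} In \eqref{QDEF} the two negative terms are bounded below by $-2C_Km f(t,k)$, exactly as in Proposition \ref{propfdec}. The two positive terms are nonnegative, so each can be bounded below by keeping only part of it. For the site $k+1$, I keep the first gain term, which gives $\sum_l K(l,k)f(t,l)f(t,k)$. For the site $k-1$, I keep the fourth gain term, which after the index shift gives $\sum_l K(k,l-1)f(t,k)f(t,l-1)$. This yields, for every $k$,
\begin{equation*}
\partial_t f(t,k\pm1)+2C_Km\,f(t,k\pm1)\ \ge\ f(t,k)\,G_\pm(t,k),
\end{equation*}
where $G_+(t,k)=\sum_l K(l,k)f(t,l)$ and $G_-(t,k)=\sum_l K(k,l)f(t,l)$. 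Multiplying by $e^{2C_Kmt}$ and integrating over $[s,t]$ (each component is $C^1$ in $t$) gives
\begin{equation*}
f(t,k\pm1)\ \ge\ \int_s^t e^{-2C_Km(t-\tau)}f(\tau,k)G_\pm(\tau,k)\,d\tau .
\end{equation*}

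\textbf{Part (i).} Choose $k_0$ with $f_0(k_0)>0$. By Proposition \ref{propfdec}, $f(\tau,k_0)\ge e^{-2C_Km\tau}f_0(k_0)>0$ for all $\tau\ge0$. Then $G_\pm(\tau,k)\ge K(k_0,k)f(\tau,k_0)$, respectively $G_\pm(\tau,k)\ge K(k,k_0)f(\tau,k_0)$, and these are strictly positive and continuous in $\tau$ by ($\mathcal{P}$). I then argue by induction on $|l|$ that $f(t,k_0+l)>0$ for all $t>0$. Assume it holds for $k=k_0+l$. The integrand above with $s=0$ is continuous, nonnegative and strictly positive on $(0,t]$, so $f(t,k\pm1)>0$ for every $t>0$. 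The base case $l=0$ follows from Proposition \ref{propfdec} with $t_0=0$.

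\textbf{Part (ii).} Now $G_\pm\ge c_Km$, by the uniform lower bound on $K$ and mass conservation. I prove \eqref{fgeq1eq} by induction on $n=|l|$, treating $l\ge0$ and $l\le0$ symmetrically, in the form
\begin{equation*}
f(t,k+l)\ge A_n e^{-2C_Kmt}\quad\text{for } t\ge t_0\sigma_n/\sigma,
\end{equation*}
with $A_0=f_0(k)$. The base case $n=0$ is Proposition \ref{propfdec} with $t_0=0$.

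For the induction step, let $t\ge t_0\sigma_n/\sigma$ and take $s=t-\delta_n$ with $\delta_n\vcentcolon=t_0a_n/\sigma$. Then every $\tau\in[s,t]$ satisfies $\tau\ge t_0\sigma_{n-1}/\sigma$, so the induction hypothesis applies at the neighbouring site closer to $k$. In the integral the exponentials combine to $e^{-2C_Kmt}$, giving $A_n\ge c_Km\,\delta_n A_{n-1}$. Since $\delta_n\ge(2C_K+1/\delta_n)^{-1}=(2C_K+\sigma/(t_0a_n))^{-1}$, this implies \eqref{fgeq1eq}.

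If $(a_n)$ is nonincreasing, then $a_j\ge a_{|l|}$ for $j\le|l|$, so each factor in the product is bounded below by the $j=|l|$ factor, which gives \eqref{fgeq2eq}.

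\textbf{Main obstacle.} The main point to get right is the time bookkeeping in the induction of part (ii). The window $[t-\delta_n,t]$ must lie entirely in the range where the previous bound is valid, which is exactly why the thresholds $t_0\sigma_n/\sigma$ are used. One must also check that the exponential weights combine to $e^{-2C_Kmt}$ rather than accumulating extra loss.

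The second thing to check carefully is the index shift that identifies the gain terms feeding $k-1$ from $k$.

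A final caveat: my argument produces the factor $\delta_n$, which is slightly stronger than the stated one. If the authors instead integrate from a fixed starting time, they would get a factor like $(2C_K+\sigma/(t_0a_j))^{-1}$; either way the stated bound follows.
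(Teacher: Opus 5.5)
Your proof is correct. Part (ii) takes a different route from the paper.

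\textbf{The paper's route.} The paper integrates $\partial_t f=Q(f)$ directly over $[t_0\sigma_{l-1}/\sigma,\,t_0\sigma_l/\sigma]$, with no integrating factor, so it has to control the loss term separately. It does this by a dichotomy:
\begin{itemize}
\item either the target bound already holds at some time $t_1<t_0\sigma_l/\sigma$, and Proposition~\ref{propfdec} propagates it forward;
\item or the bound fails on the whole interval, and that failure is then used as an \emph{upper} bound for $f(\tau,k+l)$ in the loss term.
\end{itemize}
The factor $(2C_K+\sigma/(t_0a_l))^{-1}$ is exactly what results from balancing the gain against this hypothetical upper bound. The estimate is obtained only at the threshold time $t_0\sigma_l/\sigma$ and then propagated.

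\textbf{What your route buys.} Your Duhamel formulation with the weight $e^{2C_Km t}$ absorbs the loss term completely, and the exponentials cancel. As a result, no case distinction or upper bound is needed, and the estimate holds at every $t\ge t_0\sigma_n/\sigma$ at once. You also get the sharper factor $t_0a_n/\sigma$, which dominates the stated one.

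Your version is also insensitive to the mass. The paper bounds the loss by $2C_K f(\tau,k+l)$, which tacitly uses $m(f_0)\le 1$; the general bound is $2C_Km(f_0)f(\tau,k+l)$, as in Proposition~\ref{propfdec}. In your argument the loss constant disappears, so the stated inequality follows for arbitrary $m(f_0)>0$.

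\textbf{Part (i).} The paper only remarks that a similar but simpler argument works. Your induction, using $G_+(\tau,k)\ge K(k_0,k)f(\tau,k_0)$ and $G_-(\tau,k)\ge K(k,k_0)f(\tau,k_0)$, is a valid way to carry this out.
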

%***********************************************************************
\begin{proof}
We only prove \eqref{propfg0ii} as \eqref{propfg0i} can be obtained by a similar but simpler argument.
    For $l=0$ \eqref{fgeq1eq} is satisfied for all $t\geq0$ by Proposition~\ref{propfdec}. 
    Now let $l\in\N$ and assume that \eqref{fgeq1eq} holds for $l-1$ (for all $t\geq t_0\frac{\sigma_{l-1}}{\sigma}$). Then either there exists $t_1\in\lke 0,t_0\frac{\sigma_{l}}{\sigma}\rk$ such that \eqref{fgeq1eq} holds for $l$ and $t_1$, which by Proposition~\ref{propfdec} implies that \eqref{fgeq1eq} holds for $l$ and all $t\geq t_1$, or there is no such $t_1$. If the latter is the case, then
    \begin{align}\label{flles}
    f(t,k+l)< \lk c_K m(f_0)\rk^{|l|}f_0(k)e^{-2C_Km(f_0)t}\prod_{j=1}^{|l|}\lk 2C_K+\frac{\sigma}{t_0a_{j}}\rk^{-1}
        \end{align}
        for all $t\in \lke 0,t_0\frac{\sigma_{l}}{\sigma}\rk$. We will show that, in this case, \eqref{fgeq1eq} holds for $l$ and $t=t_0\frac{\sigma_{l}}{\sigma}$ and thus for all $t\geq t_0\frac{\sigma_{l}}{\sigma}$. We have
    \begin{align*}
        f(t_0\sigma_{l}&/\sigma,k+l)=\int_{t_0\sigma_{l-1}/\sigma}^{t_0\sigma_{l}/\sigma} Q(f)(\tau,k+l)\,d\tau+f(t_0\sigma_{l-1}/\sigma,k+l)\\ 
        &\geq \int_{t_0\sigma_{l-1}/\sigma}^{t_0\sigma_{l}/\sigma} \Big(c_Km(f_0)f(\tau,k+l-1)-2C_Kf(\tau,k+l)\Big)\,d\tau\\
        &\hspace{-0.9cm}\overset{\eqref{fgeq1eq}\text{ for  }l-1,\eqref{flles}}{\geq} \int_{t_0\sigma_{l-1}/\sigma}^{t_0\sigma_{l}/\sigma}\Big[ c_Km(f_0)\frac{\lk c_Km(f_0)\rk^{l-1}f_0(k)}{
        e^{2C_Km(f_0)\tau}}\prod_{j=1}^{l-1}\lk 2C_K+\frac{\sigma}{t_0a_{j}}\rk^{-1}\\
        &\quad-2C_K\lk c_Km(f_0)\rk^{l}f_0(k)e^{-2C_Km(f_0)\tau}\prod_{j=1}^{l}\lk 2C_K+\frac{\sigma}{t_0a_{j}}\rk^{-1}\Big]\,d\tau\\
        &\geq \lk c_Km(f_0)\rk^{l}f_0(k)\prod_{j=1}^{l-1}\lk 2C_K+\frac{\sigma}{t_0a_{j}}\rk^{-1}     \lk 1-\frac{2C_K}{2C_K+\frac{\sigma}{t_0a_l}}\rk\\
        &\quad \cdot\int_{t_0\sigma_{l-1}/\sigma}^{t_0\sigma_{l}/\sigma}e^{-2C_Km(f_0)\tau}\,d\tau\\
        &\geq \lk c_Km(f_0)\rk^{l}f_0(k)\prod_{j=1}^{l-1}\lk 2C_K+\frac{\sigma}{t_0a_{j}}\rk^{-1}     \lk 1-\frac{2C_K}{2C_K+\frac{\sigma}{t_0a_l}}\rk\\
        &\quad  \cdot t_0\frac{a_l}{\sigma}e^{-2C_Km(f_0)t_0\sigma_{l}/\sigma}\\
        &=\lk c_Km(f_0)\rk^{l}f_0(k)\prod_{j=1}^{l}\lk 2C_K+\frac{\sigma}{t_0a_{j}}\rk^{-1}e^{-2C_Km(f_0)t_0\sigma_{l}/\sigma},
    \end{align*}
    which is what we wanted. The case $l<0$ is proven analogously.
    
    In order to derive \eqref{fgeq2eq} it suffices to show that 
    \begin{align*}
        \lk 2C_K+\frac{s}{t_0a_{j}}\rk^{-1}\geq \lk 2C_K+\frac{s}{t_0a_{|l|}}\rk^{-1}
    \end{align*}
    for all $j\in\{1,\dots,|l|\}$. But this is a direct consequence of the monotonicity of $(a_n)_{n\in\N}$.
\end{proof}

\begin{re}\label{reequipos}
Assuming that {\bf ($\mathcal{P}$)} holds, an important consequence of Proposition~\ref{propfg0} \eqref{propfg0i} is that any equilibrium
$f_*\in\lozp$ of \eqref{P} with positive mass satisfies $f_*(k)>0$ for all $k\in\Z$.
\end{re}
%*******************************************************************
%*************************************************************
\section{Approximation by truncation}
%**************************************************************
In this section, we show that, on any compact time interval $[0,T]$, every nonnegative $\wloz$-solution of \eqref{P} can be approximated arbitrarily well by solutions to related finite-dimensional problems.
This property is useful for several technical reasons, in particular, it will be employed in Section \ref{EntropySec} to establish the monotonicity of relative entropy. 

The basic idea of the approximation is to consider truncated versions of \eqref{P}, a technique already used in the EDG case \cite{EDG}. In contrast to \cite[Section 2.1]{EDG}, our convergence result, Theorem \ref{truncapproxc10T} below, yields convergence of the entire sequence,
without the need to extract a subsequence. This is possible in our setting because we work with bounded kernels.
%********************************************************************

For $N\in\N$, we set $S_N\vcentcolon=\{k\in\Z:|k|\leq N\}$. Given a function $g:\,\Z\to \R$, we define its truncation $g^N:\,\Z\to \R$ by $g^N(k)=g(k)$ for all $k\in S_N$ and $g^N(k)=0$ for all $k\in \Z\setminus S_N$.
Moreover, for $N\in\N$ and a kernel $K:\,\Z^2\rightarrow[0,\infty)$, we define its truncation $K^N:\,\Z^2\rightarrow[0,\infty)$ by
\begin{align*}
            K^N(k,l)\vcentcolon=\begin{cases}
            K(k,l), &\text{ if }k\in\{-N+1,\dots,N\}, l\in\{-N,\dots,N-1\}\\
            0, &\text{ otherwise.}
        \end{cases}
        \end{align*}
We denote by $Q^N$ the collision operator associated with $K^N$ and also set $Q^N(f)=Q^N(f,f)$.         
%*****************************************************************

Given $f_0\in \wlozp$ we consider the truncated systems  
\begin{equation}\label{IVPN}
    \begin{cases}
        \dot{f}(t)&=\,Q^N(f(t)),\quad t>0,\\
        f(0)&=\,f_0^N,
    \end{cases}\tag{PN}
\end{equation}
for $N\in \N$. We can apply Theorem \ref{l11globex} to the (bounded) kernel $K^N$, which shows that \eqref{IVPN} has a unique $\wloz$-solution $f_N$ on $[0,\infty)$, which is nonnegative and satisfies $m(f_N(t))=m(f_0^N)$ and $q(f_N(t))=q(f_0^N)$ for all $t\in [0,\infty)$. Observe that, by definition of $Q^N$,  $f_N(t,k)=0$ for all
$t\ge 0$ and $k\in \Z\setminus S_N$. Hence \eqref{IVPN} may also be regarded as a system of $2N+1$ ODEs.
%****************************************************************
\begin{theorem}\label{truncapproxc10T}
    Assume {\bf ($\mathcal{B}$)}. Let $f_0\in\wlozp$ and $f$ be the corresponding solution to \eqref{P}. For $N\in \N$ let $f_N$ be the solution to \eqref{IVPN} with initial value $f_0^N$. Then for every $T>0$, 
    \begin{align*}
        \lim_{N\rightarrow\infty}\lno f_N-f\rno_{C^1([0,T],\wloz)}=0.
    \end{align*}
\end{theorem}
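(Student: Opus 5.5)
We estimate $e_N(t)\vcentcolon=f(t)-f_N(t)$ in $\wloz$ by a Gronwall argument. Convergence in $C([0,T];\wloz)$ is the main point; the $C^1$-statement then follows from the equations. By Theorem~\ref{l11globex} and Lemma~\ref{ddtabsqineq}, $f$ and $f_N$ are nonnegative on $[0,T]$. Both also satisfy uniform bounds, since $m(f_0^N)\le m(f_0)$ and $\lno f_0^N\rnooo\le\lno f_0\rnooo$:
\[
\sup_{t\in[0,T]}\lno f(t)\rnooo+\sup_{N}\sup_{t\in[0,T]}\lno f_N(t)\rnooo\le R\vcentcolon=2\big(\lno f_0\rnooo+2C_Km(f_0)^2T\big).
\]
Note that the truncated kernel $K^N$ satisfies ($\mathcal{B}$) with the same constant $C_K$. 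Hence Proposition~\ref{basicpropQ} applies to $Q^N$ with constants independent of $N$.

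Next we decompose
\[
Q(f)-Q^N(f_N)=\big(Q^N(f)-Q^N(f_N)\big)+\big(Q(f)-Q^N(f)\big).
\]
By Proposition~\ref{basicpropQ}(iii) for $K^N$, the first term is bounded in $\wloz$ by $2C_{\wloz}C_KR\,\lno f-f_N\rnooo$. Integrating $\dot e_N=Q(f)-Q^N(f_N)$ and using $\lno f_0-f_0^N\rnooo\to0$, Gronwall's lemma gives
\[
\sup_{[0,T]}\lno e_N\rnooo\le e^{12C_KRT}\Big(\lno f_0-f_0^N\rnooo+\int_0^T\lno (Q-Q^N)(f(\tau))\rnooo\,d\tau\Big).
\]

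It therefore remains to show that the remainder integral tends to $0$; this is the main step. Write $Q(f)-Q^N(f)=Q_{K-K^N}(f,f)$, where $Q_{K-K^N}$ denotes the collision operator with kernel $K-K^N\ge0$, which vanishes unless $k\notin\{-N+1,\dots,N\}$ or $l\notin\{-N,\dots,N-1\}$. Following the proof of Proposition~\ref{basicpropQ}(i) with $c(k)=1+|k|$, each of the four terms is a double sum of $(1+|k|)f(\cdot)f(\cdot)$ over pairs in which at least one index lies outside $S_{N-2}$, say. Hence we obtain
\[
\lno (Q-Q^N)(f(\tau))\rnooo\le C\,C_K\Big(\lno f(\tau)\rnooo\sum_{|j|\ge N-2}|f(\tau,j)|+m(f_0)\sum_{|j|\ge N-2}(1+|j|)|f(\tau,j)|\Big),
\]
with an absolute constant $C$. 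The tail sums tend to $0$ pointwise in $\tau$ because $f(\tau)\in\wloz$, and they are uniformly bounded by a multiple of $R^2$. Dominated convergence in $\tau$ then gives convergence of the integral. Alternatively, uniform smallness of the tails on $[0,T]$ follows from compactness of $f([0,T])$ in $\wloz$, since $f$ is continuous in time.

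Finally, for the $C^1$ norm we use
\[
\lno\dot f-\dot f_N\rnooo\le \lno Q^N(f)-Q^N(f_N)\rnooo+\lno (Q-Q^N)(f)\rnooo.
\]
The first term is at most $12C_KR\sup_{[0,T]}\lno e_N\rnooo\to0$. The second tends to $0$ uniformly on $[0,T]$, using the compactness/uniform tail argument of the previous step. This yields the asserted convergence in $C^1([0,T];\wloz)$.
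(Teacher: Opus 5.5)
Your proof is correct, but it organises the error differently from the paper. You split
$Q(f)-Q^N(f_N)=\big(Q^N(f)-Q^N(f_N)\big)+(Q-Q^N)(f)$. This combines stability of the truncated operators, which is uniform in $N$ because $K^N\le K$, with consistency evaluated along the exact solution $f$. The consistency error is then controlled by the tails of $f$ itself (its $\ell_1$- and $\ell_{1,1}$-tails beyond $|j|\ge N-1$). These vanish for each $\tau$ simply because $f(\tau)\in\wloz$. So $C([0,T];\wloz)$-convergence follows by dominated convergence, and compactness of $f([0,T])$ (or Dini's theorem) is needed only for the uniform $C^1$-part.

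The paper instead writes $Q^N(f_N)-Q(f)=\big(Q^N(f_N)-Q(f_N)\big)+\big(Q(f_N)-Q(f)\big)$, so it evaluates the consistency error at the truncated solution. Since $f_N(t)$ is supported in $S_N$, the difference $Q^N(f_N)-Q(f_N)$ involves only the boundary values $f_N(t,\pm N)$, which gives the sharp estimate \eqref{QdiffEst} with the factor $(6+4N)$. The price is twofold. The paper must replace $f_N(t,\pm N)$ by $f(t,\pm N)$, absorbing the difference into the Gronwall term. It must also show $(1+N)\big(f(t,N)+f(t,-N)\big)\to0$ uniformly in $t$, for which it invokes Proposition~\ref{valleepoussin} with a superlinear weight.

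Your route is the more standard ``stability plus consistency'' argument and avoids both the boundary-flux computation and the superlinear weight. The paper's route gives a more structural statement: truncation perturbs the dynamics only through a flux at $\pm N$, expressed in terms of the truncated solution.

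A minor point: the factor $2$ in $2C_{\wloz}C_KR$ is unnecessary, since $\lno f\rnooo+\lno f_N\rnooo\le R$ already, but it is harmless.
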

%*************************************************************
\begin{proof}
The assertion is evident for $f_0=0$ since then $f(t)=f_N(t)$ for all $t\ge 0$ and $N\in \N$. For $f_0\in\wlozp\setminus\{0\}$, by scaling, we may assume that $m(f_0)=1$. This implies $m(f_N(t))=m(f_0^N)\le 1$  for all $t\ge 0$ and $N\in \N$.

Let $N\in\N$ and $g:\,\Z\to [0,\infty)$ with $g(k)=0$ for all $k\in \Z\setminus S_N$. We claim that
\begin{align} \label{QdiffEst}
        \lno Q^N(g)-Q(g)\rnooo\leq (6+4N)C_K m(g)\big( g(-N)+g(N)\big).
    \end{align}
Indeed, we have
    \begin{align*}
        \lv K^N(i,j)-K(i,j)\rv g(i)g(j)=\begin{cases}
            K(i,j)g(i)g(j), &\text{ if }(i=-N \text{ and }j\in S_N)\\&\quad\text{ or }(j=N \text{ and } i\in S_N)\\
            0, &\text{ otherwise},
        \end{cases}
    \end{align*}
    and thus
    \begin{align*}
        &\lno Q^N(g)-Q(g)\rnooo\\&\quad\leq \sum_{k,l\in\Z}(1+|k|)\Big[ \lv K^N(l,k-1)-K(l,k-1) \rv g(l)g(k-1) \\&\quad\quad\quad\quad\quad+\lv K^N(k,l-1)-K(k,l-1) \rv g(k)g(l-1)\\&\quad\quad\quad\quad\quad+\lv K^N(l,k)-K(l,k) \rv g(l)g(k)\\ &\quad\quad\quad\quad\quad+\lv K^N(k+1,l-1)-K(k+1,l-1) \rv g(k+1)g(l-1) \Big]\\
        &\quad\leq \sum_{k,l\in\Z}(3+2|k|)\Big[\lv K^N(l,k)-K(l,k) \rv g(l)g(k)\\
        &\quad\quad\quad\quad\quad+\lv K^N(k,l)-K(k,l) \rv g(k)g(l)\Big]\\
        &\quad\leq 2(3+2N)\sum_{k,l\in S_N}\lv K^N(k,l)-K(k,l) \rv g(k)g(l)\\
        &\quad\leq (6+4N)\Big[ \sum_{l\in S_N}K(-N,l)g(-N)g(l)+\sum_{k\in S_N} K(k,N)g(k)g(N)\Big]\\
        &\quad\leq (6+4N)C_K m(g)\big( g(-N)+g(N)\big).
    \end{align*}
    
Let now $t\ge 0$. Applying estimate \eqref{QdiffEst} to $g=f_N(t)$ and using Proposition \ref{basicpropQ}, we obtain
    \begin{align}
        &\lno Q^N(f_N(t))-Q(f(t))\rnooo\nonumber\\&\quad\leq \lno Q^N(f_N(t))-Q(f_N(t))\rnooo+\lno Q(f_N(t))-Q(f(t))\rnooo\nonumber\\
        &\quad\leq (6+4N)C_K \lke f_N(t,-N)+f_N(t,N)\rke\nonumber\\&\quad\quad+6C_K\lk \lno f_N(t)\rnooo+\lno f(t)\rnooo\rk\lno f_N(t)-f(t)\rnooo\nonumber\\
        &\quad\leq (6+4N)C_K\Big[ f(t,-N)+f(t,N)+\lv f_N(t,-N)-f(t,-N)\rv\nonumber\\
        &\quad\quad\quad\quad\quad\quad\quad\quad+\lv f_N(t,N)-f(t,N)\rv\Big]\nonumber\\&\quad\quad+6C_K\lk \lno f_N(t)\rnooo+\lno f(t)\rnooo\rk\lno f_N(t)-f(t)\rnooo\nonumber\\
        &\quad\leq (6+4N)C_K\big(f(t,-N)+f(t,N)\big)\nonumber\\&\quad\quad+6C_K\lk \lno f_N(t)\rnooo+\lno f(t)\rnooo +1\rk\lno f_N(t)-f(t)\rnooo. \label{Qdiffest2}
    \end{align}

    Now, let $T>0$ be fixed. Applying Lemma~\ref{ddtabsqineq} to \eqref{P} and \eqref{IVPN} and recalling that the total mass of $f(t)$ and $f_N(t)$ is bounded by one for all $t\ge 0$, we obtain that
    \begin{align}
        \lno f_N(t)\rnooo&\leq \lno f_0\rnooo+2C_Kt,\label{Abound1}\\
        \lno f(t)\rnooo&\leq \lno f_0\rnooo+2C_Kt, \label{Abound2}
    \end{align}
    for all $t\geq0$.
    Moreover, since $f$ is continuous, $f([0,T])$ is compact in $\wloz$. Thus, by Proposition~\ref{valleepoussin}, there is a superlinear sequence $x:\,\N_0\to (0,\infty)$ such that $$C_x\vcentcolon=\sup_{t\in[0,T]}\sum_{k\in\Z} x(|k|)f(t,k)<\infty.$$ This implies 
    $$f(t,N)+f(t,-N)\leq \frac{C_x}{x(N)},\quad t\in [0,T],\,N\in \N.$$ Plugging \eqref{Abound1}, \eqref{Abound2} and the last estimate into \eqref{Qdiffest2} we obtain that, for $t\in [0,T]$, 
    \begin{align}
        &\lno Q^N(f_N(t))-Q(f(t))\rnooo\nonumber\\
        &\quad\leq 6C_K\lk(1+N)\frac{C_x}{x(N)}+\lk 2\lno f_0\rnooo+4C_KT+1\rk\lno f_N(t)-f(t)\rnooo\rk.\label{QdiffEst3}
    \end{align}
    Using \eqref{QdiffEst3} and the integral formulations of \eqref{P} and \eqref{IVPN}, we estimate as follows for $t\in [0,T]$.
    \begin{align*}
        &\lno f_N(t)-f(t)\rnooo\leq \sum_{k\in\Z}(1+|k|)|f_N(t,k)-f(t,k)|\\&\quad=\sum_{k\in\Z}(1+|k|)\lv f_0^N(k)-f_0(k)+\int_0^t \big[Q^N(f_N(s))(k)-Q(f(s))(k)\big]\,ds\rv\\&\quad\leq \lno f_0^N-f_0\rnooo+\int_0^t\lno Q^N(f_N(s))-Q(f(s))\rnooo\,ds\\
        &\quad\leq \lno f_0^N-f_0\rnooo+6C_K(1+N)\frac{C_x}{x(N)}T\\
        &\quad\quad+6C_K\lk 2\lno f_0\rnooo+4C_KT+1\rk\int_0^t\lno f_N(s)-f(s)\rnooo\,ds.
    \end{align*}
    Setting $\eta=6C_K\lk 2\lno f_0\rnooo+4C_KT+1\rk$, it follows by the Gronwall lemma that
    \begin{align*}
        \lno f_N(t)-f(t)\rnooo \leq \lk \lno f_0^N-f_0\rnooo+6C_K(1+N)\frac{C_x}{x(N)}T\rk e^{\eta T}.
    \end{align*}
    Since $\lno f_0^N-f_0\rnooo\to 0$ and 
    $(1+N)/x(N)\rightarrow0$ as $N\rightarrow\infty$, we infer that 
    \begin{align}
        \sup_{t\in[0,T]}\lno f_N(t)-f(t)\rnooo\rightarrow0\quad \text{as}\;\, N\rightarrow\infty.\label{UnifLim}
    \end{align}

Finally, using the evolution equations for $f_N$ and $f$ and applying the estimates \eqref{QdiffEst3}, \eqref{UnifLim} we find that
\begin{align*}
\sup_{t\in[0,T]}\lno \dot{f}_N(t)-\dot{f}(t)\rnooo=\sup_{t\in[0,T]}\lno Q^N(f_N(t))-Q(f(t))\rnooo \to 0\quad\text{as}\;\,N\to \infty,  
\end{align*}
which completes the proof.
\end{proof}
%***********************************************
\begin{cor}\label{truncapproxmass1}
    Assume {\bf ($\mathcal{B}$)}. Let $f_0\in\wlozp$ with $m(f_0)=1$ and $f$ be the corresponding solution to \eqref{P}. Fix $N_0\in \N$ such that $m(f_0^{N_0})>0$. For $N\in \N$ with $N\ge N_0$, let $\tilde{f}_N$ be the solution to \eqref{IVPN} with initial value $\tilde{f}_0^N\vcentcolon=f_0^N/m(f_0^N)$. Then for every $T>0$, 
    \begin{align*}
        \lim_{N\rightarrow\infty}\lno \tilde{f}_N-f\rno_{C^1([0,T],\wloz)}=0.
    \end{align*}
\end{cor}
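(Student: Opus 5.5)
The plan is to reduce the corollary to Theorem~\ref{truncapproxc10T} by a scaling argument together with local Lipschitz continuity. Set $\lambda_N\vcentcolon=m(f_0^N)\in(0,1]$ for $N\ge N_0$, and note that $\lambda_N\to m(f_0)=1$ as $N\to\infty$, since $\lno f_0^N-f_0\rno_1\le\lno f_0^N-f_0\rnooo\to 0$. Let $f_N$ be the solution of \eqref{IVPN} with initial value $f_0^N$, as in Theorem~\ref{truncapproxc10T}. The truncated collision operator $Q^N$ is bilinear, so $Q^N(\lambda^{-1}g)=\lambda^{-2}Q^N(g)$. Following Remark~\ref{scaling}, the rescaled function $g_N(t)\vcentcolon=\lambda_N^{-1}f_N(t/\lambda_N)$ therefore solves $\dot g_N=Q^N(g_N)$ with $g_N(0)=\tilde f_0^N$. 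By uniqueness (Theorem~\ref{l11globex} applied to the kernel $K^N$), $\tilde f_N(t)=\lambda_N^{-1}f_N(t/\lambda_N)$ for all $t\ge 0$.

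Fix $T>0$ and split
\[
\tilde f_N(t)-f(t)=\lambda_N^{-1}\bigl(f_N(t/\lambda_N)-f(t/\lambda_N)\bigr)+(\lambda_N^{-1}-1)f(t/\lambda_N)+\bigl(f(t/\lambda_N)-f(t)\bigr).
\]
For $t\in[0,T]$ we have $t/\lambda_N\in[0,T/\lambda_{N_0}]$, and $\lambda_N^{-1}\le\lambda_{N_0}^{-1}$.
\begin{itemize}
\item The first term tends to zero uniformly on $[0,T]$ by Theorem~\ref{truncapproxc10T}, applied on the fixed interval $[0,T/\lambda_{N_0}]$.
\item The second term tends to zero because $f$ is bounded on that compact interval.
\item The third term tends to zero because $f$ is uniformly continuous on $[0,T/\lambda_{N_0}]$ and $|t/\lambda_N-t|\le T(\lambda_N^{-1}-1)\to0$.
\end{itemize}
This gives uniform convergence in $\wloz$.

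For the $C^1$ part, differentiate the identity for $\tilde f_N$ to get $\dot{\tilde f}_N(t)=\lambda_N^{-2}\dot f_N(t/\lambda_N)$, and treat it with the same three-term decomposition. The derivative difference $\dot f_N-\dot f$ is controlled by the $C^1$ statement of Theorem~\ref{truncapproxc10T}. The factor $\lambda_N^{-2}-1$ tends to zero. The remaining term is handled by uniform continuity of $\dot f=Q(f)$ on compact time intervals, which holds because $f\in C^1$.

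As an alternative, one could avoid scaling and instead use the local Lipschitz estimate of Proposition~\ref{basicpropQ}(iii) with a Gronwall argument comparing $\tilde f_N$ and $f_N$. However, this would need a uniform bound on $\tilde f_N$ in $\wloz$, which follows from Lemma~\ref{ddtabsqineq} since $m(\tilde f_N)=1$. The scaling route seems cleaner.

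The main obstacle is only bookkeeping: the time horizon changes with $N$. This is resolved by applying Theorem~\ref{truncapproxc10T} once on the $N$-independent interval $[0,T/\lambda_{N_0}]$, which is possible since $\lambda_N\ge\lambda_{N_0}>0$ because $m(f_0^N)$ is nondecreasing in $N$ for nonnegative $f_0$.
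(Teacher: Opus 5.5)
Your proof is correct and follows the paper's route: identify $\tilde f_N(t)=\lambda_N^{-1}f_N(t/\lambda_N)$ via Remark~\ref{scaling} and uniqueness, then invoke Theorem~\ref{truncapproxc10T}. The only difference is bookkeeping. The paper compares $\tilde f_N$ with $f_N$ on $[0,T]$ and asserts $\|\tilde f_N-f_N\|_{C^1([0,T],\ell_{1,1}(\mathbb{Z}))}\to 0$ because $m(f_0^N)\to 1$, which tacitly uses equicontinuity of $f_N$ and $\dot f_N$ uniformly in $N$. Your three-term split against $f$ on the fixed interval $[0,T/\lambda_{N_0}]$ needs only uniform continuity of $f$ and $\dot f$, so it makes that step explicit.
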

%*************************************************
\begin{proof}
    For $N\ge N_0$, set $\gamma_N=m(f_0^{N})^{-1}$ and let $f_N$ be the solution of \eqref{IVPN} with initial value $f_0^N$. By scaling (see Remark \ref{scaling}) and uniqueness,
    \[
    \tilde{f}_N(t,\cdot)=\gamma_N f_N(\gamma_N t,\cdot),\quad t\ge 0.
    \]
    Since $\gamma_N\to 1$, it follows that 
    $\lim_{N\rightarrow\infty}\lno \tilde{f}_N-f_N\rno_{C^1([0,T],\wloz)}=0$ for all $T>0$. Combined with Theorem \ref{truncapproxc10T}, this yields the claim.  
\end{proof}

%*************************************************************
\section{Detailed balance and equilibria}
%**********************************************************
Throughout this section we assume ($\mathcal{P}$). However,
condition ($\mathcal{B}$) is not required.

As already mentioned in the introduction, detailed balance plays an important role in kinetic equations and chemical reaction systems. We recall that condition
($\mathcal{DB}$) states that the kernel $K:\Z^2\rightarrow(0,\infty)$ admits detailed balancing, which means that there exists a function $g:\,\iZ\to (0,\infty)$ such that
    \begin{equation}\label{db}
        K(k,l-1)g(k)g(l-1)=K(l,k-1)g(l)g(k-1)\quad \mbox{for all}\;k,l\in \iZ.
    \end{equation}
In this case, $g$ is said to satisfy the {\em detailed balance condition}.

%*************************************************************
\begin{re} \label{DBRemark}
(i) If $g\in \loz$ satisfies the detailed balance condition and all series in the definition of $Q(g)$ converge, then
$Q(g)=0$, that is, $g$ is an equilibrium of \eqref{ODE}. Indeed, for every $k\in \iZ$, we have
\begin{align*}
    Q(g)(k)&=\sum_{l\in\Z}\underbrace{\big(K(l,k-1)g(l)g(k-1)-K(k,l-1)g(k)g(l-1)\big)}_{=0}\\&\quad\quad\quad+\sum_{l\in\Z}\underbrace{\big(-K(l,k)g(l)g(k)+K(k+1,l-1)g(k+1)g(l-1)\big)}_{=0}=0.
\end{align*}
(ii) Note that if $g:\,\iZ\to \iR$ satisfies \eqref{db} and 
there is $i\in\iZ$ such that $g(i)=0$, then $g=0$ on $\iZ$. This can be seen as follows. First, by \eqref{db}, we have
\[
K(i+1,i+1)g(i+1)^2=K(i+2,i)g(i+2)g(i)=0.
\]
Since $K>0$, it follows that $g(i+1)=0$. Analogously, one obtains
that $g(i-1)=0$. The claim then follows by induction. This is also consistent with Remark~\ref{reequipos}.
\end{re}
%**************************************************************

We will now examine the structure of the functions satisfying the detailed balance condition in more detail.

Suppose $g:\,\iZ\to (0,\infty)$ satisfies \eqref{db}. 
Applying \eqref{db} with $k=0,l=1$ gives
\begin{align}\label{g0}
    g(0)=\kappa \sqrt{g(1)g(-1)}\;\quad\; \mbox{with}\;\;
    \kappa\vcentcolon=\sqrt{\frac{K(1,-1)}{K(0,0)}}.
\end{align}
Next, solving \eqref{db} for $g(k)$ yields
\begin{align}\label{dbceqsearch1}
    g(k)=\frac{K(l,k-1)}{K(k,l-1)}\frac{g(l)g(k-1)}{g(l-1)}.
\end{align}
In particular, for $l=1$ we obtain
\begin{align} \label{posrec}
    g(k)=\frac{K(1,k-1)}{K(k,0)}\frac{g(1)g(k-1)}{g(0)}.
\end{align}
Analogously, solving \eqref{db} for $g(k-1)$ and replacing $k-1$ by $k$, the case $l=0$ yields
\begin{align} \label{negrec}
    g(k)=\frac{K(k+1,-1)}{K(0,k)}\frac{g(k+1)g(-1)}{g(0)}.
\end{align}
Setting
\begin{align}
    \Tilde{\psi}(k)\vcentcolon=\begin{cases}
        \prod_{j=1}^k\frac{K(1,j-1)}{K(j,0)},\, &k>0\\
        1, &k=0\\
        \prod_{j=k}^{-1}\frac{K(j+1,-1)}{K(0,j)},\, &k<0,
    \end{cases}
\end{align}
and iteratively applying \eqref{posrec} and \eqref{negrec} we obtain
\begin{align} \label{gk1}
    g(k)&=\Tilde{\psi}(k)\lk\frac{g(1)}{g(0)}\rk^k g(0), \quad k\geq0,\\
    g(k)&=\Tilde{\psi}(k)\lk\frac{g(-1)}{g(0)}\rk^{|k|}g(0),\quad k\leq0.
\end{align}
Using \eqref{g0} these relations are equivalent to
\begin{align}\label{gk2}
    g(k)&=\Tilde{\psi}(k)\kappa^{-|k|}\Big(\frac{g(1)}{g(-1)}\Big)^\frac{k}{2} g(0), \quad k\in \iZ,
\end{align}
where we can also express $g(0)$ in terms of $g(1)$ and $g(-1)$ employing \eqref{g0}.

In order to better understand when a positive function $g$ subject to
\eqref{gk2} belongs to $\loz$ and $\wloz$, respectively, and to ensure the existence of detailed balanced equilibria in these spaces we impose the assumption {\bf ($\mathcal{E}$)} (cf.\ the introduction), which states that
the limits    
    \begin{align*}
        \Lambda_+\vcentcolon=\lim_{j\rightarrow\infty}\frac{K(j,0)}{K(1,j-1)}\in (0,\infty],\quad 
        \Lambda_-\vcentcolon=\lim_{j\rightarrow-\infty}\frac{K(0,j)}{K(j+1,-1)}\in (0,\infty]
    \end{align*}
    exist and $ \mathbf{I}_K\vcentcolon=\lk\lk \Lambda_-\kappa\rk^{-1},\Lambda_+\kappa\rk\neq\emptyset$.

Observe that 
\begin{align} \label{psitildeinfty}
    \lim_{k\rightarrow\infty} \Tilde{\psi}(k)^{1/k}=\Lambda_+^{-1} \;\text{ and }\;
    \lim_{k\rightarrow-\infty} \Tilde{\psi}(k)^{1/|k|}=\Lambda_-^{-1},
\end{align}
where $\frac{1}{\infty}\vcentcolon=0.$

Thus, for $g$ as above, we obtain that if $\sqrt{\frac{g(1)}{g(-1)}}\in  \mathbf{I}_K$, then $g\in\wloz$, and if $\sqrt{\frac{g(1)}{g(-1)}}\not\in \overline{ \mathbf{I}_K}$, then $g\not\in\loz$. 

\begin{definition}\label{deffphi}
    Set $\psi(k)\vcentcolon=\Tilde{\psi}(k)\kappa^{-|k|}$, $k\in \iZ$. Define $Z:\overline{\mathbf{I}_K} \cap(0,\infty)\rightarrow[0,\infty]$ by
    \[
    Z(\phi)\vcentcolon=\sum_{l\in\Z}\psi(l)\phi^l
    \]
    and the map $\Phi: \mathbf{I}_K\rightarrow\wloz,\phi\mapsto f_\phi$, where $f_\phi$ is given by
    \begin{align*}
        f_\phi(k)\vcentcolon=\frac{\psi(k)\phi^k}{Z(\phi)},\quad k\in \iZ.
    \end{align*}
\end{definition}
%***********************************************************
\begin{re}\label{reequiwell}
    (i) By the preceding considerations, $f_\phi$ is indeed well-defined and and an element of $\wloz$ with unit mass for all $\phi\in  \mathbf{I}_K$.

    (ii) Note that if $\Lambda_+,\Lambda_-\in (0,\infty)$, then $f_\phi$ is also well-defined and an element of $\loz$ with unit mass for $\phi\in \{\lk \Lambda_-\kappa\rk^{-1},\Lambda_+\kappa\}$ provided that $Z(\phi)<\infty$.

     (iii) Let $\Lambda_+\in(0,\infty)$ and $\phi\in  \mathbf{I}_K$. Then for every $\varepsilon>0$ with $(1+\varepsilon)(\kappa\Lambda_+)^{-1}\phi<1$ (which is always possible, since $\phi<\kappa\Lambda_+$) there exists $C>0$ such that 
     \begin{align*}
         &f_\phi(k)\leq C\lk(1+\varepsilon)(\kappa\Lambda_+)^{-1}\phi\rk^k \text{ for all } k\in\N_0 \text{ and }\\
         &\sum_{k\in\N_0} C\lk(1+\varepsilon)(\kappa\Lambda_+)^{-1}\phi\rk^k<\infty
     \end{align*} This is a direct consequence of \eqref{psitildeinfty}. Moreover, for every $\varepsilon\in(0,1)$ there exists $c>0$ such that $$f_\phi(k)\geq c\lk(1-\varepsilon)(\kappa\Lambda_+)^{-1}\phi\rk^k \text{ for all }k\in\N_0.$$
     Analogue statements hold for $\Lambda_-$.
\end{re}
%************************************************************
\begin{proposition}
Assume {\bf($\mathcal{P}$)}. The following statements are equivalent.
    \begin{itemize}
        \item [(i)] $K$ admits detailed balancing.
        \item [(ii)] $\psi$ satisfies \eqref{db}.
        \end{itemize}
If in addition {\bf($\mathcal{E}$)} is satisfied, then these statements are further equivalent to
\begin{itemize}
        \item [(iii)] For all $\phi\in  \mathbf{I}_K$ the function $f_\phi(k)$ satisfies \eqref{db}.
    \end{itemize}
\end{proposition}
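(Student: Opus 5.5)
The plan rests on one observation: the detailed balance relation \eqref{db} is invariant under two operations on $g$. First, \eqref{db} is homogeneous of degree two in $g$, so it is invariant under multiplication by a positive constant. Second, it is invariant under $g(k)\mapsto g(k)\phi^k$ for any $\phi>0$. Indeed, the left side of \eqref{db} picks up the factor $\phi^{k+l-1}$ and the right side picks up $\phi^{l+k-1}$, which is the same. With these two invariances the proposition reduces to the explicit representation \eqref{gk2}.

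\textbf{(ii)$\Rightarrow$(i).} This is immediate, since $\psi(k)=\tilde\psi(k)\kappa^{-|k|}>0$ by ($\mathcal{P}$), so $\psi$ itself is a positive function satisfying \eqref{db}.

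\textbf{(i)$\Rightarrow$(ii).} Let $g>0$ satisfy \eqref{db}. The computation preceding Definition~\ref{deffphi} (equations \eqref{g0}–\eqref{gk2}) uses only \eqref{db} and positivity. It yields $g(k)=\psi(k)\phi_0^k g(0)$ for all $k\in\Z$, where $\phi_0=\sqrt{g(1)/g(-1)}$. Hence
\[
\psi(k)=g(0)^{-1}\phi_0^{-k}g(k).
\]
Applying the two invariances (scaling by $g(0)^{-1}$ and the exponential tilt with parameter $\phi_0^{-1}$) shows that $\psi$ satisfies \eqref{db}.

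\textbf{Equivalence with (iii) under ($\mathcal{E}$).} If (ii) holds and $\phi\in\mathbf{I}_K$, then $f_\phi(k)=Z(\phi)^{-1}\psi(k)\phi^k$ with $0<Z(\phi)<\infty$ by Remark~\ref{reequiwell}(i). The same invariances then show that $f_\phi$ satisfies \eqref{db}, so (iii) holds. Conversely, assume (iii). Since ($\mathcal{E}$) guarantees $\mathbf{I}_K\neq\emptyset$, we can pick any $\phi\in\mathbf{I}_K$. Then $f_\phi$ is a strictly positive function satisfying \eqref{db}, which is exactly (i).

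\textbf{Main point to check.} The only step needing care is that the derivation of \eqref{gk2} is legitimate for an arbitrary positive solution $g$ of \eqref{db}. Divisions occur by values of $K$ and of $g$, and these are nonzero by ($\mathcal{P}$) and positivity of $g$. The iteration of \eqref{posrec} and \eqref{negrec} reproduces $\tilde\psi$ for $k>0$ and for $k<0$ respectively. Finally, the conversion from \eqref{gk1} to \eqref{gk2} uses \eqref{g0}, which gives $g(1)/g(0)=\kappa^{-1}\sqrt{g(1)/g(-1)}$ and $g(-1)/g(0)=\kappa^{-1}\sqrt{g(-1)/g(1)}$. These are routine checks. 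Everything else is the two-line verification of the invariances.
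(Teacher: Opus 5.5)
Your proof is correct and follows the paper's argument. Both reduce (i)$\Rightarrow$(ii) to the representation \eqref{gk2}. Both then observe that the constant $g(0)$ and the factor $\eta^k$ cancel in \eqref{db}, which is what your ``two invariances'' express. Both obtain (iii) by the same cancellation of $\phi^k Z(\phi)^{-1}$, and the reverse implications from strict positivity of $\psi$ and $f_\phi$. Your explicit remark that (iii)$\Rightarrow$(i) uses $\mathbf{I}_K\neq\emptyset$ from ($\mathcal{E}$) makes precise a point the paper leaves implicit.
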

%*************************************************************
\begin{proof} Note first that $\psi$ and $f_\phi$ ($\phi\in  \mathbf{I}_K$) take only values in $(0,\infty)$. Thus each of two conditions (ii) and (iii) (under ($\mathcal{E}$)) implies (i). 

Now assume that (i) holds. Then from our considerations above we know that there exists $g:\,\iZ\to (0,\infty)$ that satisfies \eqref{db} and is of the form
$g(k)=\psi(k)\eta^k g(0)$, $k\in \iZ$, with some $\eta>0$; see \eqref{gk2}. By normalization, we can assume that $g(0)=1$.
Since $g$ satisfies \eqref{db}, we have
\begin{align} \label{chain}
    \frac{K(k,l-1)}{K(l,k-1)}=\frac{g(l)g(k-1)}{g(k)g(l-1)}=
    \frac{\psi(l)\eta^l \psi(k-1)\eta^{k-1}}{\psi(k)\eta^k \psi(l-1)\eta^{l-1}}=\frac{\psi(l) \psi(k-1)}{\psi(k) \psi(l-1)}
\end{align}
for all $k,l\in \iZ$, which shows (ii). If ($\mathcal{E}$) holds, then statement (iii) follows directly from (ii) and the structure of $f_\phi$ by extending the last fraction in \eqref{chain} with $\phi^{l+k-1}Z(\phi)^{-2}$. 
\end{proof}
%**************************************************************
 \begin{definition}
        The kernel $K$ is said to satisfy the \emph{extended Becker-Döring assumption} if
        \begin{gather*}
            \begin{aligned}
            \frac{K(k,l-1)}{K(l,k-1)}&=\frac{K(k,0)K(1,l-1)}{K(l,0)K(1,k-1)} &&\text{ for all }k,l\geq1,\\
            \frac{K(k,l-1)}{K(l,k-1)}&=\frac{K(0,l-1)}{K(0,k-1)}\frac{K(k,-1)}{K(l,-1)}&&\text{ for all }k,l\leq0,\\
            \frac{K(k,l-1)}{K(l,k-1)}&=\frac{K(k,0)}{K(1,k-1)}\frac{K(0,l-1)}{K(l,-1)}\frac{K(1,-1)}{K(0,0)}&&\text{ for all }k\geq1,l\leq0\text{ and }\\
            \frac{K(k,l-1)}{K(l,k-1)}&=\frac{K(1,l-1)}{K(l,0)}\frac{K(k,-1)}{K(0,k-1)}\frac{K(0,0)}{K(1,-1)}&&\text{ for all }k\leq0,l\geq1.
        \end{aligned}
        \end{gather*}
    \end{definition}
%**************************************************************
\begin{theorem}
    Assume {\bf ($\mathcal{P}$)}. $K$ admits detailed balancing if and only if $K$ satisfies the extended Becker-Döring assumption.
\end{theorem}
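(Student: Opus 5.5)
By the preceding Proposition, under ($\mathcal{P}$) detailed balancing of $K$ is equivalent to $\psi$ satisfying \eqref{db}. Since $\psi(k)=\tilde\psi(k)\kappa^{-|k|}$ and, by \eqref{chain}, a factor of the form $\eta^k$ cancels in the ratio, I will work with the identity
\[
\frac{K(k,l-1)}{K(l,k-1)}=\frac{\psi(l)\psi(k-1)}{\psi(k)\psi(l-1)},\quad k,l\in\Z. \tag{$\ast$}
\]
This identity is equivalent to $\psi$ satisfying \eqref{db}, because all quantities are positive. The task therefore reduces to showing that $(\ast)$ is exactly the extended Becker--D\"oring assumption. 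The plan is to compute the single-step quotients $\psi(k)/\psi(k-1)$ explicitly and compare.

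First compute these quotients from the definition of $\tilde\psi$. For $k\ge1$ we have $\psi(k)/\psi(k-1)=\kappa^{-1}K(1,k-1)/K(k,0)$. For $k\le0$ we have $\psi(k-1)/\psi(k)=\kappa^{-1}K(k,-1)/K(0,k-1)$, obtained from the factor $j=k-1$ in the negative product, which is $K(k,-1)/K(0,k-1)$. Hence
\[
\frac{\psi(k-1)}{\psi(k)}=\begin{cases}\kappa\,\dfrac{K(k,0)}{K(1,k-1)},& k\ge1,\\[2mm] \kappa^{-1}\dfrac{K(k,-1)}{K(0,k-1)},& k\le0.\end{cases}
\]
The right-hand side of $(\ast)$ equals $\frac{\psi(k-1)}{\psi(k)}\big/\frac{\psi(l-1)}{\psi(l)}$. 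Substituting in the four sign cases for $(k,l)$ gives the following. For $k,l\ge1$ the powers of $\kappa$ cancel, and one obtains the first line of the extended Becker--D\"oring assumption. For $k,l\le0$ one obtains the second line. For $k\ge1,\ l\le0$ one gets a factor $\kappa^2=K(1,-1)/K(0,0)$ together with $\frac{K(k,0)}{K(1,k-1)}\frac{K(0,l-1)}{K(l,-1)}$, which is the third line. For $k\le0,\ l\ge1$ one gets $\kappa^{-2}$ and the fourth line. Thus $(\ast)$ for all $k,l\in\Z$ is literally equivalent to the four displayed identities. This proves both directions at once, via the preceding Proposition.

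The only point requiring care is the boundary indices. One must check that the case split $k\ge1$ versus $k\le0$ matches the definition of $\tilde\psi$, in particular at $k=0$ and $k=1$, where $\tilde\psi(0)=1$. It is also worth checking that the formulas are consistent there; for example, at $k=1$ the first formula gives $\kappa K(1,0)/K(1,0)=\kappa$. Everything else is bookkeeping with positive quantities, so I expect no real obstacle beyond these index checks.
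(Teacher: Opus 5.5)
Your proposal is correct and matches the paper's proof: the paper also writes $K(k,l-1)/K(l,k-1)$ in terms of $\tilde\psi$ and $\kappa$, using the explicit form of a detailed-balance function $g$ (where the powers of $g(1)/g(-1)$ cancel), and obtains the converse by running the same computation with $\psi$ in place of $g$. Your single-step quotients $\psi(k-1)/\psi(k)$ are right, including at the boundary indices $k=0,1$. Routing both directions through the preceding Proposition simply writes out in full the four sign cases that the paper leaves as ``analogous''.
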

%*******************************************************************************
\begin{proof}
    First, assume $K$ admits detailed balancing. Then there exists $g:\Z\rightarrow(0,\infty)$ that satisfies \eqref{db}. Hence, by the considerations at the beginning of this section, it follows that $g$ is given by \eqref{gk2}. We only show the assertion for the case $k\geq1$ and $l\leq0$, as the other cases can be treated analogously. We have
    \begin{align*}
        &K(k,l-1)g(k)g(l-1)=K(l,k-1)g(l)g(k-1)\\
        &\iff \frac{K(k,l-1)}{K(l,k-1)}=\frac{\Tilde{\psi}(k-1)\kappa^{-|k-1|}\Big(\frac{g(1)}{g(-1)}\Big)^\frac{k-1}{2}}{\Tilde{\psi}(k)\kappa^{-|k|}\Big(\frac{g(1)}{g(-1)}\Big)^\frac{k}{2}}\frac{\Tilde{\psi}(l)\kappa^{-|l|}\Big(\frac{g(1)}{g(-1)}\Big)^\frac{l}{2}}{\Tilde{\psi}(l-1)\kappa^{-|l-1|}\Big(\frac{g(1)}{g(-1)}\Big)^\frac{l-1}{2}}\\
        &\iff \frac{K(k,l-1)}{K(l,k-1)}=\frac{\Tilde{\psi}(k-1)\kappa}{\Tilde{\psi}(k)\Big(\frac{g(1)}{g(-1)}\Big)^\frac{1}{2}}\frac{\Tilde{\psi}(l)\kappa}{\Tilde{\psi}(l-1)\Big(\frac{g(1)}{g(-1)}\Big)^{-\frac{1}{2}}}\\
        &\iff\frac{K(k,l-1)}{K(l,k-1)}=\frac{\Tilde{\psi}(k-1)}{\Tilde{\psi}(k)}\frac{\Tilde{\psi}(l)}{\Tilde{\psi}(l-1)}\kappa^2\\
        &\iff \frac{K(k,l-1)}{K(l,k-1)}=\frac{K(0,l-1)}{K(l,-1)}\frac{K(k,0)}{K(1,k-1)}
        \frac{K(1,-1)}{K(0,0)},
    \end{align*}
    which is what we wanted.

    Now, assume $K$ satisfies the extended Becker-Döring assumption. Then $\psi$ satisfies \eqref{db} and thus $K$ admits detailed balancing. This can be seen by similar calculations as in the first part of this proof by substituting $g$ with $\psi$.
\end{proof}
%********************************************************************

\begin{re}
 We note that the equation for $k,\,l\geq 1$ in the extended Becker-Döring assumption coincides with the Becker-Döring assumption for the EDG model in \cite{EDG}. 
 \end{re}
 %******************************************************************
 \begin{re}
 Let $K_1,K_2:\Z^2\rightarrow(0,\infty)$. If $K_1$ and $K_2$ admit detailed balancing, then $K_1\cdot K_2$ also admits detailed balancing. This can be seen by the extended Becker-Döring assumption or by definition: if $g_1$ satisfies the detailed balance condition with respect to $K_1$ and $g_2$ satisfies the detailed balance condition with respect to $K_2$, then $g_1\cdot g_2$ satisfies the detailed balance condition with respect to $K_1\cdot K_2$. Moreover, if $K$ enjoys the detailed balance property (say with function $g$), then also $K^{-1}$ does so (consider $g^{-1}$). In particular, this means that the set of all detailed balance kernels $K:\Z^2\rightarrow(0,\infty)$ forms an abelian group, with neutral element $K_e\equiv1$. It is straightforward to verify that $\kappa_{K_1\cdot K_2}=\kappa_{K_1}\cdot\kappa_{K_2}$. The same holds for $\Lambda_\pm,\Tilde{\psi}$ and $\psi$. Hence the interval boundaries of $\mathbf{I}_{K_1\cdot K_2}$ are the products of the respective interval boundaries of $\mathbf{I}_{K_1}$ and $\mathbf{I}_{K_2}$.
\end{re}

The next lemma states that total charge is strictly increasing along the family $(f_\phi)_{\phi\in \mathbf{I}_K}$. Its proof is analogous to the argument in \cite[(1.14)]{EDG}.
%********************************************************************
\begin{lemma}\label{ddtqequi}
     Assume ($\mathcal{P}$) and {($\mathcal{E}$)}. Then the function $q(\Phi(\,\cdot\,)): \mathbf{I}_K\rightarrow\R$ is continuously differentiable and $\frac{d}{d\phi}q\lk \Phi(\phi)\rk>0$ for all $\phi\in  \mathbf{I}_K$.
\end{lemma}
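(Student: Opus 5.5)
The proof views $q(\Phi(\phi))$ as a logarithmic derivative of the partition function. For $\phi\in\mathbf{I}_K$ we have
\[
q(\Phi(\phi))=\frac{1}{Z(\phi)}\sum_{k\in\Z}k\psi(k)\phi^k=\frac{\phi Z'(\phi)}{Z(\phi)},
\]
provided $Z$ can be differentiated termwise. The plan is therefore: first, show that $Z$ is $C^2$ on $\mathbf{I}_K$, with derivatives obtained by termwise differentiation. Second, compute the derivative of $q\circ\Phi$ and identify it as a variance, which is positive.

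\textbf{Step 1: regularity of $Z$.} Write $Z(\phi)=\sum_{k\ge0}\psi(k)\phi^k+\sum_{k<0}\psi(k)\phi^{k}$. The first sum is a power series in $\phi$. The second is a power series in $1/\phi$.

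By \eqref{psitildeinfty}, $\lim_{k\to\infty}\psi(k)^{1/k}=(\kappa\Lambda_+)^{-1}$. Hence the first series has radius of convergence $\kappa\Lambda_+=\phi_+$, which is $\infty$ if $\Lambda_+=\infty$. Similarly, $\lim_{k\to-\infty}\psi(k)^{1/|k|}=(\kappa\Lambda_-)^{-1}$, so the second series converges for $1/\phi<\kappa\Lambda_-$, that is, for $\phi>\phi_-$.

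Thus on each compact subinterval $[a,b]\subset\mathbf{I}_K$ both series, together with their termwise derivatives of any order, converge uniformly. This is the standard property of power series inside the radius of convergence; equivalently, one can use the geometric bounds of Remark~\ref{reequiwell}(iii), since $k^2$ times a geometric sequence is summable. Consequently $Z\in C^\infty(\mathbf{I}_K)$, $Z>0$, and the numerator series $\sum_k k\psi(k)\phi^k$ is $C^\infty$ as well. So $q\circ\Phi=\phi Z'/Z$ is $C^1$.

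\textbf{Step 2: sign of the derivative.} Set
\[
M_j(\phi)=\sum_{k\in\Z}k^j\psi(k)\phi^k, \qquad j=0,1,2,
\]
so that $M_0=Z$. Differentiating termwise gives $\frac{d}{d\phi}M_1=\phi^{-1}M_2$. Then
\[
\frac{d}{d\phi}\,\frac{M_1}{M_0}=\frac{1}{\phi}\left(\frac{M_2}{M_0}-\Big(\frac{M_1}{M_0}\Big)^2\right)=\frac{1}{\phi}\sum_{k\in\Z}\big(k-q(f_\phi)\big)^2f_\phi(k).
\]
This is $\phi^{-1}$ times the variance of the probability density $f_\phi$ on $\Z$. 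Since $f_\phi(k)>0$ for all $k$, the density is not concentrated at a single point, so the variance is strictly positive. Hence $\frac{d}{d\phi}q(\Phi(\phi))>0$.

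\textbf{Main obstacle.} The only delicate point is justifying termwise differentiation uniformly near the endpoints, including the cases $\Lambda_\pm=\infty$. This is handled by working on compact subintervals $[a,b]\subset\mathbf{I}_K$ and choosing $\varepsilon$ as in Remark~\ref{reequiwell}(iii) so that $b(1+\varepsilon)<\phi_+$ and $a^{-1}(1+\varepsilon)<\phi_-^{-1}$ (the latter condition being void when $\Lambda_-=\infty$). On such an interval the Weierstrass M-test with majorants $Ck^2r^{|k|}$, $r<1$, applies.
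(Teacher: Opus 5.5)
Your proof is correct and follows the paper's argument: the paper likewise writes $\phi\frac{d}{d\phi}q(\Phi(\phi))=M_2/M_0-(M_1/M_0)^2$ and gets strict positivity from Jensen's inequality, which is exactly your observation that this is the variance of the fully supported density $f_\phi$. Your Step~1 adds the justification of termwise differentiation, which the paper leaves implicit. One small point there: Remark~\ref{reequiwell}(iii) is stated only for finite $\Lambda_\pm$, so when $\Lambda_+=\infty$ or $\Lambda_-=\infty$ you should rely on your radius-of-convergence argument rather than on that remark.
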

\begin{proof}
We have
    \begin{align*}
        \phi\,\frac{d}{d\phi}\,q(\Phi(\phi))&=\phi\,\frac{d}{d\phi}\,\frac{\sum_{k\in\Z} k\psi(k)\phi^k}{\sum_{k\in\Z}\psi(k)\phi^k}\\
        &=\frac{\sum_{k\in\Z} k^2\psi(k)\phi^k}{\sum_{k\in\Z}\psi(k)\phi^k}-\lk\frac{\sum_{k\in\Z} k\psi(k)\phi^k}{\sum_{k\in\Z}\psi(k)\phi^k}\rk^2>0,
    \end{align*}
    where the last step is a consequence of Jensen's inequality. 
\end{proof}
%**********************************************************************************
\begin{exa}\label{Kexag}
    Let $(x_k)_{k\in\Z},(z_k)_{k\in\Z}\subset (0,\infty)^{\Z}$, $c>0$ and let $S:\Z^2\rightarrow(0,\infty)$ satisfy $S(k,l-1)=S(l,k-1)$ for all $k,l\in\Z$. Then
    \begin{align}
        K(k,l)\vcentcolon=\begin{cases}
            c S(k,l)x_kz_l, &\text{ if }l<0<k\\
            S(k,l)x_kz_l, &\text{ otherwise},
        \end{cases}
    \end{align}
    satisfies the extended Becker-Döring assumption. See also \cite[Example 1.6]{EDG}. 
\end{exa}
%**************************************************************************************
We will now study the following special case in more detail.
%****************************************************************************************
\begin{exa}    
    Let $a,b>0$ and
    \begin{align}
        K_{a,b}(k,l)\vcentcolon=\begin{cases}
            a+b, &\text{ if }l<0<k\\
            a, &\text{ otherwise}.
        \end{cases}
    \end{align}
First, observe that when $x_k=z_k=\sqrt{a}$ for all $k\in\Z$ and $c=(a+b)/a$, and $S\equiv 1$, the kernel takes the form given in Example~\ref{Kexag}. Moreover, we have $\Tilde{\psi}(l)=1$, $l\in\Z$, $\kappa=\sqrt{\frac{a+b}{a}}$, and thus $\psi(l)=\sqrt{\frac{a}{a+b}}^{|l|}$, $l\in\Z$, and $ \mathbf{I}_K=\lk\sqrt{\frac{a}{a+b}},\sqrt{\frac{a+b}{a}}\rk=(\kappa^{-1},\kappa)$. By Lemma~\ref{ddtqequi}, the function 
$q(\Phi(\,\cdot\,)):\, \mathbf{I}_K\rightarrow  \mathbf{J}_K$
    is bijective.
    For $\phi\in \mathbf{I}_K$ we have $f_\phi(k)={\sqrt{\frac{a}{a+b}}^{|k|}\phi^k}{Z(\phi)^{-1}}$, $k\in \Z$, with
   $$Z(\phi)=\frac{1}{1-\kappa^{-1}\phi^{-1}}+\frac{1}{1-\kappa^{-1}\phi}-1,$$
    and thus
    \begin{align*}
        q(f_\phi)=\lk\frac{\kappa^{-1}\phi}{\lk1-\kappa^{-1}\phi\rk^2}-\frac{\kappa^{-1}\phi^{-1}}{\lk1-\kappa^{-1}\phi^{-1}\rk^2}\rk\lk\frac{1}{1-\kappa^{-1}\phi}+\frac{1}{1-\kappa^{-1}\phi^{-1}}-1\rk^{-1}.
    \end{align*}
    Observe that $\lim_{\phi\rightarrow \kappa-}q(f_\phi)=\infty$ and
    $\lim_{\phi\rightarrow \frac{1}{\kappa}+}q(f_\phi)=-\infty$. Hence, $ \mathbf{J}_K=\R$.
\end{exa}

%*************************************************************************************
\begin{exa}\label{ExaFinMaxCharge}
    The following example is inspired by Example~1.6 in \cite{EDG} (see also Section 1.1 in \cite{Nh02}). Let $\gamma\in (0,1)$, $c>1$ and 
    \begin{align*}
        y_k\vcentcolon=\begin{cases}
            1+1/k^\gamma,&\text{ if }k\in\N\\
            1, &\text{ otherwise}.
        \end{cases}
    \end{align*}
    Consider the kernel given by
    \begin{align}
        K(k,l)\vcentcolon=\begin{cases}
            cy_ky_{-l},&\text{ if }l<0<k\\
            y_ky_{-l}, &\text{ otherwise}.
            \end{cases}
    \end{align}
    We have $\kappa=2\sqrt{c}$, $\Lambda_\pm=\frac{1}{2}$, $ \mathbf{I}_K=(\sqrt{c}^{-1},\sqrt{c})$ and
    \begin{align*}
        \psi(k)=\begin{cases}
            \frac{1}{2\sqrt{c}^k}\prod_{j=2}^k \lk 1+\frac{1}{j^\gamma}\rk^{-1}, & k>0\\
            1, &k=0\\
            \frac{\sqrt{c}^k}{2}\prod_{j=2}^{-k}\lk1+\frac{1}{j^\gamma}\rk^{-1},& k<0,
        \end{cases}
    \end{align*}
with the convention that $\prod_{j=2}^1 a_j\vcentcolon=1$. Note that $\mathbf{J}_K$ is bounded. Consequently, for sufficiently large positive or negative total charges, there is no detailed balance equilibrium of unit mass with such a total charge.
We give an argument that $\mathbf{J}_K$ is bounded from above, the argument for the lower bound is similar. In order to prove this boundedness, we need to show that
\begin{align*}
    \lim_{\phi\rightarrow\phi_+}q(f_\phi)=\lim_{\phi\rightarrow\sqrt{c}}\frac{\skz k\psi(k)\phi^k}{\skz \psi(k)\phi^k}<\infty.
\end{align*}
Let $\Tilde{\phi}\in \mathbf{I}_K$ be fixed. Then $\skz \psi(k)\phi^k\geq\sum_{k\in\N}\psi(k)\Tilde{\phi}^k>0$ for all $\phi\in [\Tilde{\phi},\sqrt{c})$. Thus it suffices to prove that 
\begin{align*}
    \lim_{\phi\rightarrow\sqrt{c}}\skz k\psi(k)\phi^k<\infty.
\end{align*}
Let $\phi\in (1,\sqrt{c})$. Using that $\log x\ge \frac{1}{2}(x-1)$ for all $x\in [1,2]$, we have
\begin{align*}
 &\skz k\psi(k)\phi^k\leq\sum_{k\in\N}k\psi(k)\phi^k=
 \sum_{k\in\N} \frac{k}{2\sqrt{c}^k}\prod_{j=2}^k \lk 1+\frac{1}{j^\gamma}\rk^{-1} \phi^k \\
 & \le \frac{1}{2}+\frac{1}{2}\sum_{k=2}^\infty k\exp\Big(-\sum_{j=2}^k \log\Big( 1+\frac{1}{j^\gamma}\Big)
 \Big)\le \frac{1}{2}+\frac{1}{2}\sum_{k=2}^\infty k\exp\Big(-\frac{1}{2}\sum_{j=2}^k 
 \frac{1}{j^\gamma}\Big)<\infty,
\end{align*}
since $\gamma\in (0,1)$ implies that $\sum_{j=2}^k 
 \frac{1}{j^\gamma}\sim c(\gamma)k^{1-\gamma}$ for $k\to \infty$.
\end{exa}

\begin{exa}
    We revisit the case $K\equiv1$. As already mentioned in the introduction, for every initial value $f_0\in
    \wlozp$ with $m(f)=1$, the absolute total charge of the corresponding solution to \eqref{P} is increasing
    and tends to infinity as $t\to \infty$. This means that there cannot be equilibria of \eqref{P} in $\wlozp$ except for the zero function. This is consistent with the results of this section. Indeed, we have $ \mathbf{I}_K=\emptyset$, since $\kappa=1$ and $\Lambda_\pm=1$. Hence {\bf$\mathcal{E}$} is not satisfied. Moreover, $\Tilde{\psi}(k)=1$ for all $k\in\Z$. This means that the nonnegative  functions that satisfy the detailed balance condition are simply exponential functions, which is also the case in the EDG model, cf.\ \cite{EDG}. But in contrast to the EDG model, the only function in $\lozp$ that satisfies the detailed balance condition \eqref{db} is $g\equiv 0$.
\end{exa}
%************************************************************
\section{Relative entropy} \label{EntropySec}
%*************************************************************
%***************************************************************
In this section, we always assume {\bf($\mathcal{B}$), ($\mathcal{P}$), ($\mathcal{DB}$)} and {\bf ($\mathcal{E}$)}.
We restrict ourselves to nonnegative functions with unit mass. This simplifies the presentation of the results and, in view of the scaling properties (and a suitable generalisation of relative entropy, see for example \cite{BGL}), entails no loss of generality. We set
%*************************************************************
\begin{align*}
    \po&\vcentcolon=\lkg f\in\lozp:m(f)=1\rkg, \quad
    \poo \vcentcolon=\po \cap \wlozp,\;\;\text{and}\\
    \pn&\vcentcolon=\lkg f\in \po: f(k)=0\;\text{for all}\;k\in \Z\setminus S_N\rkg,\quad N\in \N.
\end{align*}
%********************************************************
We begin by recalling the definition of relative entropy in the present setting.
%*********************************************
\begin{definition}\label{relentdef}
    Let $f,g\in\po$ and $g(k)>0$ for every $k\in\Z$. Then the \emph{relative entropy} of $f$ with respect to $g$ is defined by
     \begin{align*}
        H(f|g)\vcentcolon=\sum_{k\in\Z} f(k)\log\lk\frac{f(k)}{g(k)}\rk,
    \end{align*}
where, as usual, we adopt the convention $0\log(0)=0$.
\end{definition}
%***********************************************************************
\begin{lemma}\label{lement1}
Let $f,g\in\po$ and $g(k)>0$ for every $k\in\Z$. Then the following assertions hold.
\begin{enumerate}[(i)]
    \item The relative entropy is well-defined and takes values in $[0,\infty]$.
    \item $H(f|g)=0$ if and only if $f=g$.
    \item\label{lement1iii} If $f\in\poo$, $\Lambda_-,\Lambda_+<\infty$ and $\phi\in  \mathbf{I}_K$, then  $0\leq H(f|f_\phi)<\infty$.
\end{enumerate}
\end{lemma}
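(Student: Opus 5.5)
The natural tool is the convex function $\Theta(x)\vcentcolon=x\log x-x+1\ge 0$ on $[0,\infty)$, with $\Theta(x)=0$ only for $x=1$. For $f,g\in\po$ with $g>0$ we write, formally,
\[
f(k)\log\frac{f(k)}{g(k)}=g(k)\,\Theta\Big(\frac{f(k)}{g(k)}\Big)+f(k)-g(k).
\]
Summing this identity gives the whole lemma.

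For (i), the terms $g(k)\Theta(f(k)/g(k))$ are nonnegative, so their sum is a well-defined element of $[0,\infty]$. The terms $f(k)-g(k)$ form an absolutely summable series, because $f,g\in\loz$, and its sum is $m(f)-m(g)=0$. Hence the negative parts of the summands $f(k)\log(f(k)/g(k))$ are bounded by $g(k)$ and are summable, since $x\log x\ge x-1$ gives $f\log(f/g)\ge f-g\ge -g$. So the series defining $H(f|g)$ has a well-defined value in $(-\infty,\infty]$, independent of the order of summation, and
\[
H(f|g)=\sum_{k\in\Z}g(k)\,\Theta\big(f(k)/g(k)\big)\in[0,\infty].
\]
For (ii), if $H(f|g)=0$ then every nonnegative term $g(k)\Theta(f(k)/g(k))$ vanishes. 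Since $g(k)>0$, this forces $f(k)/g(k)=1$, so $f=g$. The converse is immediate.

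For (iii), fix $\phi\in\mathbf{I}_K$ and $f\in\poo$. Using $\log f_\phi(k)=\log\psi(k)+k\log\phi-\log Z(\phi)$, we split
\[
f(k)\log\frac{f(k)}{f_\phi(k)}=f(k)\log f(k)-f(k)\log f_\phi(k).
\]
\emph{First term.} $\sum_k f(k)\log f(k)\le 0$, since $f(k)\le 1$. We also need it to be finite from below, i.e.\ $\sum_k f(k)\log(1/f(k))<\infty$. Split indices according to whether $f(k)\ge e^{-|k|}$ or not. In the first case $f(k)\log(1/f(k))\le |k|f(k)$, which is summable because $f\in\wloz$. In the second case we use $x\log(1/x)\le C\sqrt{x}$, so the term is at most $Ce^{-|k|/2}$, which is also summable.

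\emph{Second term.} We need $\sum_k f(k)\,|\log f_\phi(k)|<\infty$, and it suffices to show $|\log f_\phi(k)|\le C(1+|k|)$. This is where $\Lambda_\pm<\infty$ enters. By Remark~\ref{reequiwell}(iii), for $k\ge 0$ we have
\[
c\,r_1^k\le f_\phi(k)\le C\,r_2^k
\]
with constants $r_1,r_2\in(0,\infty)$. Here $r_1>0$ requires $\Lambda_+<\infty$, since then $(1-\varepsilon)(\kappa\Lambda_+)^{-1}\phi>0$. The analogous bounds hold for $k\le 0$ using $\Lambda_-$. Hence $|\log f_\phi(k)|\le C'(1+|k|)$.

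Combining the two terms, $H(f|f_\phi)<\infty$, and nonnegativity follows from (i). The main (mild) obstacle is the lower bound on $f_\phi$, i.e.\ the linear growth of $|\log f_\phi|$. If $\Lambda_+=\infty$, then $f_\phi$ may decay superexponentially and the entropy can be infinite, which is why the hypothesis $\Lambda_\pm<\infty$ is needed.
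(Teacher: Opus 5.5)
Your proof is correct and follows the paper's argument for (iii). Both split $H(f|f_\phi)$ into $\sum_k f(k)\log f(k)$ and $-\sum_k f(k)\log f_\phi(k)$, and the key point in both is the linear bound $|\log f_\phi(k)|\le C(1+|k|)$, which comes from $\tilde\psi(k)^{1/|k|}\to\Lambda_\pm^{-1}\in(0,\infty)$. The paper cites the literature for (i)–(ii) and for the finiteness of $\sum_k f(k)\log f(k)$; you instead give the standard self-contained arguments (the $\Theta$-identity, and the splitting at $e^{-|k|}$), and both are valid.
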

%*******************************************************************
\begin{proof}
        For (i) and (ii), we refer to Lemma 10.1 in \cite{Nag}. To prove (iii), note that
    \begin{align} \label{Hsplit}
        H(f|f_\phi)=&\sum_{k\in\Z}f(k)\log(f(k))-\sum_{k\in\Z}f(k)\log\lk \psi(k)\phi^k\rk+\sum_{k\in\Z}f(k)\log Z(\phi).
    \end{align}
The first term is finite by Lemma~4.2 in \cite{Ball}, while the third term is finite since $f\in\loz$. Concerning the second term, we write
    \begin{align*}
        \sum_{k\in\Z}f(k)\log\lk \psi(k)\phi^k\rk&=\sum_{k=1}^\infty k\left[ f(k)\log\lk \Tilde{\psi}(k)^{1/k}\kappa^{-1}\phi\rk\right.\\&\quad\quad+\left.f(-k)\log\lk \Tilde{\psi}(-k)^{1/k}\kappa^{-1}\phi^{-1}\rk\right]. 
    \end{align*}
    From the convergence properties of $\Tilde{\psi}$ (see \eqref{psitildeinfty}) and the assumptions on $\Lambda_-$, $\Lambda_+$ and $\phi$, it follows that $\log\lk \Tilde{\psi}(k)^{1/k}\kappa^{-1}\phi\rk_{k\in\N}$ and $\log\lk \Tilde{\psi}(-k)^{1/k}\kappa^{-1}\phi^{-1}\rk_{k\in\N}$ are bounded sequences. This, together with $f\in\wloz$, implies that the second term on the right of \eqref{Hsplit} converges in $\R$.
\end{proof}

We next look at continuity properties of relative entropy.

%***********************************************************************
\begin{lemma}\label{Hsemicontcont} 
(i) For every $g\in\po$ with $g(k)>0$ for every $k\in\Z$, the relative entropy $H(\,\cdot\,|g)$ is lower semicontinuous on $\po$ in the $\loz$-norm; in particular, this holds on $\poo$ in the $\wloz$-norm.

\smallskip
\noindent (ii) If $\Lambda_-,\Lambda_+<\infty$ and $\phi\in  \mathbf{I}_K$, then $H(\,\cdot\,|f_\phi)$ is continuous on $\poo$ with respect to the $\wloz$-norm. 
This also holds for $\phi=\phi_+$ ($\phi=\phi_-$) provided that $f_{\phi_+}$ ($f_{\phi_-}$) is well-defined and lies in $\poo$.
\end{lemma}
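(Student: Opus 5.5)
For part (i), I will write $H(f|g)$ as a sum of nonnegative terms. Since $f,g\in\po$, we have $\sum_k (f(k)-g(k))=0$, so
\[
H(f|g)=\sum_{k\in\Z}\Big(f(k)\log\frac{f(k)}{g(k)}-f(k)+g(k)\Big)=\sum_{k\in\Z} g(k)\,\Theta\Big(\frac{f(k)}{g(k)}\Big),
\]
where $\Theta(x)=x\log x-x+1\ge 0$ is continuous on $[0,\infty)$. Note that this identity requires some care: the original series may have terms of both signs. I will justify it via Lemma~\ref{lement1}(i), or by splitting the sum into the part where $f\ge g$ and the part where $f<g$. On the latter part, $|f\log(f/g)|\le g/e+|f-g|$, so that part is absolutely summable. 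Now let $f_n\to f$ in $\loz$ within $\po$. Then $f_n(k)\to f(k)$ for every $k$, and each summand $g(k)\Theta(f_n(k)/g(k))$ converges to $g(k)\Theta(f(k)/g(k))$. Fatou's lemma for series of nonnegative terms then gives $H(f|g)\le\liminf_n H(f_n|g)$. The statement on $\poo$ follows because convergence in the $\wloz$-norm implies convergence in the $\loz$-norm.

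For part (ii), I will use the splitting \eqref{Hsplit}:
\[
H(f|f_\phi)=\sum_k f(k)\log f(k)-\sum_k f(k)\log\big(\psi(k)\phi^k\big)+\log Z(\phi),
\]
valid on $\poo$ by Lemma~\ref{lement1}(iii). The third term is constant. The second term is a linear functional $f\mapsto\sum_k w(k)f(k)$ with $w(k)=\log(\psi(k)\phi^k)$. As in the proof of Lemma~\ref{lement1}(iii), $|w(k)|\le C(1+|k|)$, since $\Lambda_\pm<\infty$ makes the sequences $\log(\tilde\psi(\pm k)^{1/k}\kappa^{-1}\phi^{\pm1})$ bounded. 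This functional is therefore bounded on $\wloz$, and hence continuous.

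The main work is the continuity of the entropy $\sum_k f\log f$ on $\poo$ in the $\wloz$-norm. Let $f_n\to f$ in $\wloz$. I will split the sum into $|k|\le M$ and $|k|>M$. On the finite block, continuity of $x\mapsto x\log x$ on $[0,\infty)$ handles convergence. For the tails, I need a uniform bound $\sup_n\sum_{|k|>M}|f_n(k)\log f_n(k)|\to0$ as $M\to\infty$, which I expect to be the main obstacle. The bound I will aim for is the following. Where $f_n(k)\ge e^{-|k|}$, we have $|f_n\log f_n|\le |k|f_n(k)$, since also $f_n\le1$. Where $f_n(k)<e^{-|k|}$, we have $|f_n\log f_n|\le C\sqrt{f_n(k)}\le Ce^{-|k|/2}$. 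Together,
\[
\sum_{|k|>M}|f_n(k)\log f_n(k)|\le \sum_{|k|>M}|k|f_n(k)+C\sum_{|k|>M}e^{-|k|/2}.
\]
The second sum is small for large $M$. The first is uniformly small because $\wloz$-convergence gives $\sum_{|k|>M}|k|f_n(k)\le\|f_n-f\|_{1,1}+\sum_{|k|>M}|k|f(k)$. This yields continuity of the entropy term, and hence of $H(\cdot|f_\phi)$.

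For the endpoint cases $\phi=\phi_\pm$, assume $f_{\phi_+}$ is well defined and lies in $\poo$. Since $\Lambda_+<\infty$, \eqref{psitildeinfty} gives $\tilde\psi(k)^{1/k}\kappa^{-1}\phi_+\to1$, so $w(k)=o(k)$ as $k\to+\infty$. On the negative side, $\phi_+>\phi_-$ keeps $w(k)=O(|k|)$. So $|w(k)|\le C(1+|k|)$ still holds, and the same argument goes through.
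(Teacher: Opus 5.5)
Your proposal is correct and follows the paper's proof. Part (i) is the same Fatou argument applied to the nonnegative integrand $g\,\Theta(f/g)$, and part (ii) uses the same three-term splitting \eqref{Hsplit}: the middle term is controlled by $|\log(\psi(k)\phi^k)|\le C(1+|k|)$ and the last term is constant on $\poo$. The only difference is that you prove continuity of $f\mapsto\sum_k f(k)\log f(k)$ on $\poo$ directly, via your tail estimate that splits according to whether $f_n(k)\ge e^{-|k|}$, whereas the paper simply cites \cite[Lemma 4.2]{Ball} for this step.
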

%********************************************************************
\begin{proof}
    (i) Since $\poo\subset\po$ and convergence in $\wloz$ entails convergence in $\loz$, it suffices to prove the assertion for $\po$.
    Let $(f_n)_{n\in\N}\subset\po$ and $f\in\po$ such that $f_n\rightarrow f$ in $\loz$ as $n\rightarrow\infty$. Setting
    $\Psi(z)=z\lke \log\lk z\rk-1\rke+1$, $z\ge 0$, and noting that $\Psi(z)\ge 0$ on $[0,\infty)$, we have
    \begin{align*}
        H(f|g)&=\sum_{k\in\Z} \Psi\Big(\frac{f(k)}{g(k)}\Big)g(k)=\sum_{k\in\Z}\liminf_{n\rightarrow\infty}\Psi\Big(\frac{f_n(k)}{g(k)}\Big) g(k)\\
        &\leq \liminf_{n\rightarrow\infty}\sum_{k\in\Z}
        \Psi\Big(\frac{f_n(k)}{g(k)}\Big) g(k)
        =\liminf_{n\rightarrow\infty}H(f_n|g),
    \end{align*}
    where we used Fatou's lemma.

(ii) We decompose the relative entropy into three terms as in \eqref{Hsplit}. The first term is continuous by \cite[Lemma 4.2]{Ball}, while the third is a scalar multiple of the $\loz$-norm and hence continuous. For the second term, we argue as in the last part of the proof of Lemma~\ref{lement1} \eqref{lement1iii}, which yields the desired continuity property.
\end{proof}
%********************************************************************
\begin{lemma}\label{lemfinent}
    Let $f\in\poo$. The following statements are equivalent.
    \begin{enumerate}[(i)]
        \item\label{lemfinenti} There exists $\phi\in \mathbf{I}_K$ with $Z(\phi)<\infty$ and $H(f|f_\phi)<\infty$.
        \item\label{lemfinentii} For all $\phi\in \overline{ \mathbf{I}_K}\cap(0,\infty)$ with $Z(\phi)<\infty$, one has $H(f|f_\phi)<\infty$.
        \item\label{lemfinentiii} The sum $\sum_{k\in\Z}f(k)\log\lk \psi(k)\rk$ converges in $\R$.
    \end{enumerate}
\end{lemma}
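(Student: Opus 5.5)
The plan is to use the decomposition \eqref{Hsplit}. For $\phi\in\overline{\mathbf{I}_K}\cap(0,\infty)$ with $Z(\phi)<\infty$ we have $f_\phi(k)=\psi(k)\phi^k/Z(\phi)>0$, and for $f\in\poo$ we can write
\begin{align*}
H(f|f_\phi)=\sum_{k\in\Z}f(k)\log f(k)-\sum_{k\in\Z}f(k)\log\psi(k)-\log(\phi)\,q(f)+\log Z(\phi),
\end{align*}
provided the individual series make sense.

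The first term is finite for $f\in\poo$ by \cite[Lemma 4.2]{Ball}, as already used in the proof of Lemma~\ref{lement1}. More precisely, its negative part is controlled by the first moment, and its positive part is bounded since $f\le1$ gives $f\log f\le 0$. So it is a real number. The third term is finite because $f\in\wloz$, so $q(f)$ is a real number. The fourth term is finite because $Z(\phi)\in(0,\infty)$. Hence, for every admissible $\phi$, the quantity $H(f|f_\phi)$ differs from $-\sum_k f(k)\log\psi(k)$ by a finite real number.

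This identity needs justification, because the splitting of an infinite series is only legitimate when the pieces converge. To handle this, I will split $f(k)\log(f(k)/f_\phi(k))$ termwise and argue as follows. The sequences $f\log f$, $k f(k)\log\phi$ and $f(k)\log Z(\phi)$ are absolutely summable. Therefore $\sum f(k)\log(f(k)/f_\phi(k))$, which always exists in $[0,\infty]$ by Lemma~\ref{lement1}(i), equals $+\infty$ if and only if $\sum f(k)(-\log\psi(k))$ diverges to $+\infty$. It is finite if and only if $\sum f(k)\log\psi(k)$ converges.

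Convergence in \eqref{lemfinentiii} should be read as follows: the series of nonnegative-part terms converges, and the positive part of $\log\psi$ only contributes finitely. For the latter, note that $\psi(k)=\tilde\psi(k)\kappa^{-|k|}$ and, by \eqref{psitildeinfty}, $\tilde\psi(k)^{1/|k|}$ is bounded since $\Lambda_\pm>0$. Hence $\log\psi(k)\le C(1+|k|)$, and $\sum f(k)(\log\psi(k))_+<\infty$ because $f\in\wloz$. So the only possible divergence is to $-\infty$, which makes "converges in $\R$" equivalent to "is not $-\infty$".

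With this, I will prove \eqref{lemfinentiii}$\Rightarrow$\eqref{lemfinentii} directly from the identity. The implication \eqref{lemfinentii}$\Rightarrow$\eqref{lemfinenti} is trivial, since $\mathbf{I}_K\neq\emptyset$ and $Z<\infty$ on $\mathbf{I}_K$ by Remark~\ref{reequiwell}. For \eqref{lemfinenti}$\Rightarrow$\eqref{lemfinentiii}, finiteness of $H(f|f_\phi)$ together with the identity forces $\sum f\log\psi$ to be finite. The main obstacle is the careful bookkeeping of positive and negative parts, which justifies splitting the series without assuming convergence in advance. Everything else follows directly from the earlier estimates.
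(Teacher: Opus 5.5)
Your proposal is correct and follows the paper's proof. Both rest on the decomposition \eqref{Hsplit}, on the finiteness of $\sum_k f(k)\log f(k)$, $q(f)\log\phi$ and $\log Z(\phi)$, and on the trivial implication (ii)$\Rightarrow$(i). Your extra bookkeeping (summable negative parts of $f\log(f/f_\phi)$, and $(\log\psi)_+\le C(1+|k|)$, so the only possible divergence is to $-\infty$) simply makes rigorous the splitting of series that the paper performs without comment.
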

%********************************************************************
\begin{proof}
    \eqref{lemfinenti}$\Rightarrow$\eqref{lemfinentiii}:
    We decompose $H(f|f_\phi)$ into three terms as in \eqref{Hsplit}. The first and third terms are finite.
    If $H(f|f_\phi)<\infty$, then the second term must also converge in $\R$. Writing
    \[
    \sum_{k\in\Z}f(k)\log\lk \psi(k)\phi^k\rk=
    \sum_{k\in\Z}f(k)\log\lk \psi(k)\rk+
    \sum_{k\in\Z}k f(k)\log \phi,
    \]
    and noting that the last sum is finite, we obtain \eqref{lemfinentiii}.
    
\eqref{lemfinentiii}$\Rightarrow$\eqref{lemfinentii}: 
This follows by reversing the argument from the first part.

\eqref{lemfinentii}$\Rightarrow$\eqref{lemfinenti}: Since $ \mathbf{I}_K\neq \emptyset$ and $Z(\phi)<\infty$ for all $\phi\in  \mathbf{I}_K$ by {\bf ($\mathcal{E}$)}, \eqref{lemfinentii} implies \eqref{lemfinenti}.
\end{proof}

%**********************************************************************

\begin{proposition}\label{propweightedbound}
    Let $f,f_*\in \po$ with $f_*(k)>0$ for all $k\in \Z$. Let $h:\Z\rightarrow[0,\infty)$ be such that 
    \[\sum_{k\in \Z}h(k)\exp\Big(-\frac{h(k)}{2}\Big)<\infty.\]
    Assume that there exists $N_0\in \N$ such that 
    \begin{align} \label{fstarbound}
    f_*(k)\leq \exp(-h(k))\quad\text{for all}\;\,k\in\Z\;\,\text{with}\;\,|k|\geq N_0.
    \end{align}
    Then
    \begin{align} \label{weightedsum}
       \skz h(k)f(k)\leq \max_{|k|<N_0}h(k)+2H(f|f_*)+
       \sum_{k\in \Z}h(k)\exp\Big(-\frac{h(k)}{2}\Big)+2.
    \end{align}
\end{proposition}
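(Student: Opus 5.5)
The plan is to split the sum $\sum_k h(k)f(k)$ into the finitely many indices $|k|<N_0$ and the tail $|k|\ge N_0$. The first part is at most $\max_{|k|<N_0}h(k)\sum_{|k|<N_0}f(k)\le \max_{|k|<N_0}h(k)$, since $f\in\po$ has unit mass and $h\ge0$. For the tail we will use the elementary Young/Fenchel-type inequality
\[
ab\le a\log a - a + e^{b},\qquad a\ge 0,\ b\in\R,
\]
with the convention $0\log 0=0$; it follows from convexity, since $e^b$ is the Legendre transform of $a\log a-a$. The idea is to apply it after splitting off the reference density $f_*$.

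Fix $k$ with $|k|\ge N_0$. We write $h(k)f(k)=2\,\frac{f(k)}{f_*(k)}\cdot\frac{h(k)}{2}f_*(k)$ and apply the inequality with $a=f(k)/f_*(k)$ and $b=h(k)/2$. This gives
\[
h(k)f(k)\le 2\Big[f(k)\log\tfrac{f(k)}{f_*(k)}-f(k)+f_*(k)e^{h(k)/2}\Big].
\]
By \eqref{fstarbound}, $f_*(k)e^{h(k)/2}\le e^{-h(k)/2}$. We then sum over $|k|\ge N_0$. The $-2f(k)$ terms contribute at most $0$, and the last terms give $2\sum_k e^{-h(k)/2}$.

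Two technical points remain.

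First, the sum of $f\log(f/f_*)$ over the tail is not obviously bounded by $H(f|f_*)$, since individual summands may be negative. We handle this through the nonnegative integrand $\Psi$ from the proof of Lemma~\ref{Hsemicontcont}:
\[
f\log\tfrac{f}{f_*}-f+f_*=f_*\Psi\big(\tfrac{f}{f_*}\big)\ge 0 .
\]
So we add and subtract $f_*(k)$ and rewrite the tail bound as
\[
2\sum_{|k|\ge N_0}f_*\Psi\big(\tfrac{f}{f_*}\big)+2\sum_{|k|\ge N_0}\Big(f_*(k)e^{h(k)/2}-f_*(k)\Big).
\]
The first sum is at most $2\sum_{k\in\Z}f_*\Psi(f/f_*)=2H(f|f_*)$, because both $f$ and $f_*$ have unit mass. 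The second sum is at most $2\sum_{|k|\ge N_0} e^{-h(k)/2}$. If $H(f|f_*)=\infty$ there is nothing to prove.

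Second, the claimed bound contains $\sum h(k)e^{-h(k)/2}+2$ rather than $2\sum e^{-h(k)/2}$, which is the main obstacle. We will handle it by splitting the indices according to the size of $h$. Where $h(k)\ge 2$ we have $2e^{-h(k)/2}\le h(k)e^{-h(k)/2}$. Where $h(k)<2$ we do not use the Young bound at all, but simply estimate $h(k)f(k)\le 2f(k)$; these terms sum to at most $2$ by unit mass.

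With this case split at $h=2$ inside the tail, and using nonnegativity of $\Psi$ as above so that the entropy terms over the subset are still dominated by $2H(f|f_*)$, summing everything yields \eqref{weightedsum}. The care needed is in justifying that restricting the $\Psi$-sum to a subset of indices only decreases it, which holds since $\Psi\ge0$.
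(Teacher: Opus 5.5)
Your argument is correct and proves the stated bound, but the tail is handled differently from the paper.

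\textbf{The paper's route.} It splits each tail according to whether $f(k)/f_*(k)\ge e^{h(k)/2}$ (the set $M_1$) or not ($M_2$).
\begin{itemize}
\item On $M_1$ it uses $h(k)f(k)\le 2f(k)\log\big(f(k)/f_*(k)\big)$. Passing from $f\log(f/f_*)$ to $f_*\Psi(f/f_*)$ costs $2\sum_{M_1}(f-f_*)\le 2$, and this is where the additive $2$ comes from.
\item On $M_2$, \eqref{fstarbound} gives $f(k)<e^{-h(k)/2}$ directly, which yields the term $h(k)e^{-h(k)/2}$.
\end{itemize}

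\textbf{Your route.} You apply the Fenchel--Young inequality $ab\le a\log a-a+e^{b}$ pointwise. This treats both regimes at once and absorbs the $-f+f_*$ correction exactly into $\Psi$. The price is that it produces $2\sum e^{-h/2}$, which need not be finite under the stated hypothesis. That is why you need the extra split at $h=2$, and in your argument the constant $2$ comes from the indices with $h<2$ rather than from the mass on $M_1$.

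\textbf{What each buys.} Your route gives, as a by-product, the cleaner duality estimate
\[
\sum_{|k|\ge N_0}h(k)f(k)\le 2H(f|f_*)+2\sum_{|k|\ge N_0}e^{-h(k)/2}.
\]
This holds whenever the right-hand side is finite and carries no additive $2$. It already suffices for applications such as Corollary~\ref{l11bound}, where $h(k)\ge\delta|k|$. The paper's route is entirely elementary, with no convex duality, and lands directly on the $h\,e^{-h/2}$ term that appears in the hypothesis.
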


%***********************************************************************

\begin{proof}
    Since $f\in \po$, we have 
    \begin{align} \label{ZerlegungN0}
        \skz h(k)f(k)\leq \max_{|k|<N_0}h(k)+\sum_{k\geq N_0} h(k)f(k)+
        \sum_{k\leq -N_0}h(k)f(k).
    \end{align}
   To estimate the second summand on the right-hand side, we define
    \begin{align*}
        M_1&\vcentcolon=\lkg k\geq N_0:\frac{f(k)}{f_*(k)}\geq e^\frac{h(k)}{2}\rkg \text{ and } 
        M_2\vcentcolon=\lkg k\geq N_0:\frac{f(k)}{f_*(k)}<e^\frac{h(k)}{2}\rkg.
    \end{align*}
    Then $h(k)/2\leq\log\lk\frac{f(k)}{f_*(k)}\rk$ for all $k \in M_1$, and $f(k)<\exp(-h(k)/2)$ for all $k\in M_2$, where we use \eqref{fstarbound}. Hence
    \begin{align*}
        \sum_{k\geq N_0}h(k)f(k)\leq2\sum_{k\in M_1}f(k)\log\lk\frac{f(k)}{f_*(k)}\rk+\sum_{k\in M_2}
        h(k)\exp\Big(-\frac{h(k)}{2}\Big).
    \end{align*}
    Moreover,
    \begin{align*}
         \sum_{k\in M_1}f(k)\log\lk\frac{f(k)}{f_*(k)}\rk &\leq \sum_{k\in M_1}\lke\frac{f(k)}{f_*(k)}\lk \log\lk\frac{f(k)}{f_*(k)}\rk-1\rk+1\rke f_*(k)\\
        &\quad\quad+\sum_{k\in M_1}f(k)-\sum_{k\in M_1}f_*(k).
    \end{align*}
    
    The third term on the right-hand side of \eqref{ZerlegungN0} is estimated analogously. Combining all estimates yields \eqref{weightedsum}.
\end{proof}
%*********************************************************************
We now turn to monotonicity of relative entropy along solutions of \eqref{P} with respect to detailed balance equilibria. 
%********************************************************************
\begin{definition}
For $k,l\in\Z$ and $g\in\loz$, we define
    \begin{align*}
        j_{l,k-1}[g] &\vcentcolon=K(l,k-1)g(l)g(k-1),\\
        J_{k-1}[g]& \vcentcolon=\sum_{l\in\Z}\lk j_{l,k-1}[g]-j_{k,l-1}[g]\rk.
    \end{align*}
    Supposing in addition that $g$ takes values in $(0,\infty)$, we further define
    \begin{align*}
        &W(g)\vcentcolon=\frac{1}{2} \sum_{k,l\in\Z} \big( j_{l+1,k}[g]-j_{k+1,l}[g]\big)\lk\log\lk \frac{j_{l+1,k}[g]}{j_{k+1,l}[g]}\rk\rk,\\
        W^N(g)&\vcentcolon=\frac{1}{2} \sum_{k,l=-N}^{N-1} \lk j_{l+1,k}[g]-j_{k+1,l}[g]\rk\lk\log\lk \frac{j_{l+1,k}[g]}{j_{k+1,l}[g]}\rk\rk,\quad N\in \N.
    \end{align*}
\end{definition}
%****************************************************************************
\begin{re}\label{Wre}
For $g\in\loz$ with $g(k)>0$ for all $k\in \Z$, the following statements hold.
\begin{itemize}
\item [(i)] $W(g)\in [0,\infty]$ and $W^N(g)\in [0,\infty)$ for all $N\in\N$, since $(x-y)\log(x/y)\ge 0$ for all $x,y>0$. 
\item [(ii)] $W(g)\leq \liminf_{N\rightarrow\infty}W^N(g)$, by Fatou's lemma.
\item [(iii)] $g$ satisfies \eqref{db} if and only if $W(g)=0$, since for all $x,y>0$,
    \begin{align*}
        (x-y)\log(x/y)=0 \Leftrightarrow x=y.
    \end{align*}
    \end{itemize}
\end{re}
%********************************************************************************

We recall that, by Remark \ref{DBRemark} (ii), 
for every nonnegative equilibrium $f_*\neq 0$ of \eqref{P} satisfying \eqref{db}, one has $f_*(k)>0$ for all $k\in \Z$.
%******************************************************
\begin{proposition}\label{ddtHN}
Let $N\in \N$, $f_0\in\pn$, and let $f_N$ be the solution to \eqref{IVPN} with initial value $f_0\,(=f_0^N)$. Furthermore, let $f_*$ be a nonnegative equilibrium of \eqref{ODE} with unit mass satisfying \eqref{db}. Then
\[
\frac{d}{dt}H(f_N(t)|f_*)=-W^N(f_N(t)),\quad t>0.
\]
\end{proposition}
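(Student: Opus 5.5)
The plan is to treat \eqref{IVPN} as a finite system of $2N+1$ ODEs for $f_N(t,k)$, $k\in S_N$. Since $H$ is a finite sum, differentiate it termwise and symmetrise the resulting double sum using detailed balance. Two preliminary facts are needed.

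\emph{Positivity.} Since $K^N(k,l)>0$ for $k\in\{-N+1,\dots,N\}$, $l\in\{-N,\dots,N-1\}$, the argument of Proposition~\ref{propfg0}(i) applied to the truncated kernel shows that $f_N(t,k)>0$ for all $t>0$ and $k\in S_N$. This requires $m(f_0)=1>0$, and it makes $\log f_N(t,k)$ well defined. If one prefers not to redo the positivity argument, one could instead work on intervals where $f_N>0$. Here I simply invoke the same lower-bound mechanism: in the proof of Proposition~\ref{propfg0}, only pairs with $k,k\pm1\in S_N$ are used.

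\emph{Termwise differentiation.} By mass conservation, $\sum_{k\in S_N}\partial_t f_N(t,k)=0$, so the term coming from differentiating $x\log x$ drops out. Hence
\[
\frac{d}{dt}H(f_N(t)|f_*)=\sum_{k\in S_N}\log\frac{f_N(t,k)}{f_*(k)}\,Q^N(f_N(t))(k).
\]

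Next I apply Proposition~\ref{partInt} with kernel $K^N$ and $c(k)=\log\bigl(f_N(t,k)/f_*(k)\bigr)$ for $k\in S_N$. The values of $c$ outside $S_N$ are irrelevant: because $K^N(l,k)=0$ unless $k\le N-1$ and $l\ge -N+1$, every surviving term only involves $c$ on $S_N$. Writing $g=f_N(t)$, this gives
\[
\sum_{k,l}g(k)g(l)\Bigl(K^N(l,k)[c(k+1)-c(k)]+K^N(k,l)[c(k-1)-c(k)]\Bigr).
\]
In the second term I swap $k\leftrightarrow l$ and then shift $l\to l+1$ in the first term. Both pieces then run over $k,l\in\{-N,\dots,N-1\}$, producing
\[
\sum_{k,l=-N}^{N-1}\bigl(j_{l+1,k}[g]-j_{k+1,l}[g]\bigr)\bigl(c(k+1)-c(k)\bigr)
\]
with the original $K$, since $K^N=K$ on the relevant index set. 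The index bookkeeping should be checked carefully here.

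Symmetrising in $(k,l)$, which flips the sign of the first factor, turns this into
\[
-\tfrac12\sum_{k,l=-N}^{N-1}\bigl(j_{l+1,k}[g]-j_{k+1,l}[g]\bigr)\bigl(c(l+1)+c(k)-c(k+1)-c(l)\bigr).
\]
Finally, detailed balance \eqref{db} for $f_*$ (with $k\to k+1$, $l\to l+1$) gives
\[
K(l+1,k)f_*(l+1)f_*(k)=K(k+1,l)f_*(k+1)f_*(l).
\]
Therefore
\[
c(l+1)+c(k)-c(k+1)-c(l)=\log\frac{j_{l+1,k}[g]}{j_{k+1,l}[g]},
\]
and the expression equals $-W^N(g)$.

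The main obstacle is getting the index ranges exactly right, so that the truncated sum matches the range $-N,\dots,N-1$ in $W^N$ and no boundary terms survive.
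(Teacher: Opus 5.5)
Your proposal is correct and is essentially the paper's proof. The paper writes $Q^N(g)(k)=J_{k-1}[g]-J_k[g]$ and sums by parts, which is exactly what Proposition~\ref{partInt} applied with $K^N$ does for you; from the double sum $\sum_{k,l=-N}^{N-1}\bigl(j_{l+1,k}[g]-j_{k+1,l}[g]\bigr)\bigl(c(k+1)-c(k)\bigr)$ onward, the symmetrisation and the use of \eqref{db} are identical, and your index ranges check out (the second term just needs the shift $k\to k+1$).

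One caveat, which you share with the paper, concerns the positivity step. Since $K^N(-N,\cdot)=K^N(\cdot,N)=0$, the unit point masses $f_0=e_{-N}$ and $f_0=e_N$ are stationary for \eqref{IVPN}, so positivity on $S_N$ fails for them. There the identity holds only under the convention $0\log(0/0)=0$, with both sides equal to $0$. For every other $f_0\in\pn$ the lower-bound argument does go through, because some mass stays off $\{-N\}$ and off $\{N\}$.
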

%*******************************************************
\begin{proof}
Note first that $f_N(t,k)>0$ for all $t>0$ and $k\in S_N$. This follows from the fact that $K^N(k,l)>0$ for all $k\in\{-N+1,\dots,N\}$ and $l\in\{-N,\dots,N-1\}$ and by arguing as in the proof of Proposition \ref{propfg0}.

The following computation is analogous to that in the proof of Proposition 3.4 in \cite{EDG}.
    \begin{align*} 
        \frac{d}{dt}H(&f_N(t)|f_*)=\sum_{k=-N}^N\frac{d}{dt}\lk f_N(t,k)\log\lk\frac{f_N(t,k)}{f_*(k)}\rk\rk \\ &= \sum_{k=-N}^NQ^N(f_N(t,\cdot))(k)\log\lk\frac{f_N(t,k)}{f_*(k)}\rk \\ 
        &= \sum_{k=-N}^{N-1}\lk J_{k-1}[f_N(t,\cdot)]-J_k[f_N(t,\cdot)]\rk\log\lk\frac{f_N(t,k)}{f_*(k)}\rk \\
        &= \sum_{k=-N}^{N-1}J_k[f_N(t,\cdot)]\lk\log\lk\frac{f_N(t,k+1)}{f_*(k+1)}\rk-\log\lk\frac{f_N(t,k)}{f_*(k)}\rk\rk  \\
        &= \sum_{k,l=-N}^{N-1} \lk j_{l+1,k}[f_N(t,\cdot)]-j_{k+1,l}[f_N(t,\cdot)]\rk\\
        &\quad\quad\quad\quad\quad\quad\cdot\lk\log\lk\frac{f_N(t,k+1)}{f_*(k+1)}\rk-\log\lk\frac{f_N(t,k)}{f_*(k)}\rk\rk\\  
        &= \frac{1}{2}\sum_{k,l=-N}^{N-1} \lk j_{l+1,k}[f_N(t,\cdot)]-j_{k+1,l}[f_N(t,\cdot)]\rk\\
        &\quad\quad\quad\quad\quad\quad\cdot\lk\log\lk\frac{f_N(t,k+1)}{f_*(k+1)}\rk-\log\lk\frac{f_N(t,k)}{f_*(k)}\rk\rk
        \end{align*}
        \begin{align*}
        \hphantom{\frac{d}{dt}H(f|f_*)=}&\quad+\frac{1}{2}\sum_{k,l=-N}^{N-1} \lk j_{k+1,l}[f_N(t,\cdot)]-j_{l+1,k}[f_N(t,\cdot)]\rk\\
        &\quad\quad\quad\quad\quad\quad\cdot\lk\log\lk\frac{f_N(t,l+1)}{f_*(l+1)}\rk-\log\lk\frac{f_N(t,l)}{f_*(l)}\rk\rk  \\
        &=\frac{1}{2} \sum_{k,l=-N}^{N-1} \lk j_{l+1,k}[f_N(t,\cdot)]-j_{k+1,l}[f_N(t,\cdot)]\rk\left(\log\lk\frac{f_N(t,k+1)}{f_*(k+1)}\rk\right.\\
        &\quad\quad\quad\quad\quad\quad\left.-\log\lk\frac{f_N(t,k)}{f_*(k)}\rk-\log\lk\frac{f_N(t,l+1)}{f_*(l+1)}\rk+\log\lk\frac{f_N(t,l)}{f_*(l)}\rk\right)  \\
        &=\frac{1}{2} \sum_{k,l=-N}^{N-1} \lk j_{l+1,k}[f_N(t,\cdot)]-j_{k+1,l}[f_N(t,\cdot)]\rk\\
        &\quad\quad\quad\quad\quad\quad\cdot\lk\log\lk \frac{f_N(t,k+1)f_N(t,l)}{f_*(k+1)f_*(l)}\cdot \frac{f_*(k)f_*(l+1)}{f_N(t,k)f_N(t,l+1)} \rk\rk\\
        \hphantom{\frac{d}{dt}H(f|f_*)}&=\frac{1}{2} \sum_{k,l=-N}^{N-1}\lk j_{l+1,k}[f_N(t,\cdot)]-j_{k+1,l}[f_N(t,\cdot)]\rk\\
        &\quad\quad\quad\quad\quad\quad\cdot\left(\log\left( \frac{K(k+1,l)f_N(t,k+1)f_N(t,l)}{K(k+1,l)f_*(k+1)f_*(l)}\right.\right.\\
        &\quad\quad\quad\quad\quad\quad\quad\quad\quad\quad\quad\quad\left.\left.\cdot \frac{K(l+1,k)f_*(k)f_*(l+1)}{K(l+1,k)f_N(t,k)f_N(t,l+1)} \right)\right)\\
        &=\frac{1}{2} \sum_{k,l=-N}^{N-1} \lk j_{l+1,k}[f_N(t,\cdot)]-j_{k+1,l}[f_N(t,\cdot)]\rk\lk\log\lk \frac{j_{k+1,l}[f_N(t,\cdot)]}{j_{l+1,k}[f_N(t,\cdot)]}\rk\rk\\ 
        &=-W^N(f_N(t,\cdot)),
    \end{align*}
    where in the penultimate step we used that $f_*$ satisfies \eqref{db}. 
\end{proof}
%*********************************************************************************

\begin{cor}\label{HWNint} Let $N\in \N$, $f_0\in\pn$, and let $f_N$ be the solution to \eqref{IVPN} with initial value $f_0\,(=f_0^N)$. Moreover, let $f_*$ be a nonnegative equilibrium of \eqref{ODE} with unit mass satisfying \eqref{db}. Then, for all $0\leq s\leq t$,
    \begin{align*}
        H(f_N(t)|f_*)+\int_s^t W^N(f_N(\tau))\,d\tau=H(f_N(s)|f_*).
    \end{align*}
\end{cor}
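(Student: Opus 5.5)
The plan is to integrate the identity of Proposition~\ref{ddtHN} over $[s,t]$. The only care needed is at the endpoint $s=0$, where the derivative formula is only asserted for $t>0$.

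First, fix $0<s\le t$. By Proposition~\ref{ddtHN}, the map $\tau\mapsto H(f_N(\tau)|f_*)$ is differentiable on $(0,\infty)$ with derivative $-W^N(f_N(\tau))$. We check that this derivative is continuous on $[s,t]$. For $\tau>0$ we have $f_N(\tau,k)>0$ for all $k\in S_N$ (see the first step of the proof of Proposition~\ref{ddtHN}), and also $f_N(\tau,k)$ for $k\in S_N$ is continuous in $\tau$. By Proposition~\ref{propfdec}, applied to the kernel $K^N$, these components are bounded below by a positive constant on $[s,t]$. Since $W^N$ is a finite sum of continuous functions of finitely many positive variables, $\tau\mapsto W^N(f_N(\tau))$ is continuous on $[s,t]$. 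The fundamental theorem of calculus then gives the identity for all $0<s\le t$.

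Second, we handle $s=0$ by letting $s\downarrow 0$. The function $\tau\mapsto H(f_N(\tau)|f_*)$ is continuous on $[0,\infty)$. Indeed, it is a finite sum over $k\in S_N$ of $x\log x - x\log f_*(k)$ evaluated at $x=f_N(\tau,k)$; the map $x\mapsto x\log x$ is continuous on $[0,\infty)$ with the convention $0\log 0=0$, $f_*(k)>0$, and $f_N\in C^1$. Hence $H(f_N(s)|f_*)\to H(f_N(0)|f_*)$ as $s\downarrow 0$. Since $W^N\ge 0$ by Remark~\ref{Wre}(i), monotone convergence gives $\int_s^t W^N\,d\tau\to\int_0^t W^N\,d\tau$. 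This limit is finite because it equals $H(f_N(0)|f_*)-H(f_N(t)|f_*)$, and both entropies are finite sums. The case $s=t$ is trivial.

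The main obstacle is the behaviour of $W^N$ near $\tau=0$, where some components of $f_0$ may vanish and the logarithms in $W^N$ blow up. The plan avoids any pointwise control there: we only use the nonnegativity of $W^N$ and the continuity of the entropy up to $\tau=0$, and then pass to the limit as above.
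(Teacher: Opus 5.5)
Your proof is correct and follows the paper's argument: apply Proposition~\ref{ddtHN} and the fundamental theorem of calculus on $[s,t]$ for $s>0$, then let $s\downarrow 0$, using monotone convergence (since $W^N\ge 0$) and the continuity of $\tau\mapsto H(f_N(\tau)|f_*)$ on $[0,\infty)$. You also spell out why $\tau\mapsto W^N(f_N(\tau))$ is continuous on $[s,t]$, which the paper simply calls trivial.
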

%************************************************************************************
\begin{proof}
    For $0<s\leq t$, this is trivial. For $s=0$ this can be obtained by  monotone convergence (since $W^N(f^N(\tau))$ is nonnegative and well-defined for all $\tau>0$; the value for $\tau=0$ is irrelevant) and the fact that $H(f_N(\,\cdot\,)|f_*)$ is continuous.
\end{proof}
%**************************************************************************
\begin{cor}\label{corHWleqH}
    Let $f_0\in\poo$, $f$ the corresponding solution to \eqref{P}, and let $f_*\in \po$ be a detailed balance equilibrium of \eqref{ODE}. Then, for all $0\leq s\leq t$,
    \begin{align}\label{hwlh}
        H(f(t)|f_*)+\int_s^t W(f(\tau))\,d\tau\leq H(f(s)|f_*).
    \end{align}
    Here, the right-hand side or both sides can take the value $\infty$. In particular, if $H(f_0|f_*)<\infty$, then $t\mapsto H(f(t)|f_*)\in [0,\infty)$ is nonincreasing.
\end{cor}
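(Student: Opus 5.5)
We pass to the limit $N\to\infty$ in the identity of Corollary~\ref{HWNint}, using the truncation results of Section~4. Since $m(f_0)=1$, Corollary~\ref{truncapproxmass1} applies. For $N\ge N_0$ we let $\tilde f_N$ be the solution of \eqref{IVPN} with initial value $\tilde f_0^N=f_0^N/m(f_0^N)\in\pn$. Then $\tilde f_N\to f$ in $C^1([0,T];\wloz)$ for every $T>0$. In particular $\tilde f_N(\tau)\to f(\tau)$ in $\wloz$, hence pointwise on $\Z$, for every $\tau\ge 0$. Moreover $f_*$ is a detailed balance equilibrium in $\po$, so $f_*(k)>0$ for all $k$ by Remark~\ref{DBRemark}(ii). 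Corollary~\ref{HWNint} therefore gives
\begin{equation*}
H(\tilde f_N(t)|f_*)+\int_s^t W^N(\tilde f_N(\tau))\,d\tau=H(\tilde f_N(s)|f_*),\qquad 0\le s\le t.
\end{equation*}

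First we treat the case $s>0$, and we may assume $H(f(s)|f_*)<\infty$, since otherwise there is nothing to prove. We take $\liminf_{N\to\infty}$ on the left-hand side. Lower semicontinuity (Lemma~\ref{Hsemicontcont}(i)) gives $H(f(t)|f_*)\le\liminf_N H(\tilde f_N(t)|f_*)$.

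For the dissipation term, fix $\tau>0$. By Proposition~\ref{propfg0}(i), $f(\tau,k)>0$ for all $k$, and $\tilde f_N(\tau,k)>0$ on $S_N$ by the argument in the proof of Proposition~\ref{ddtHN}. For fixed $k,l$, eventually $-N\le k,l\le N-1$, and the summand of $W^N$ converges to the corresponding summand of $W(f(\tau))$, because $\tilde f_N(\tau,\cdot)\to f(\tau,\cdot)$ pointwise and the limits are strictly positive. Extending each summand by zero outside $S_N$ and using nonnegativity, Fatou's lemma in $(k,l)$ gives $W(f(\tau))\le\liminf_N W^N(\tilde f_N(\tau))$. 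A second application of Fatou, now in $\tau$, gives $\int_s^t W(f)\,d\tau\le\liminf_N\int_s^t W^N(\tilde f_N)\,d\tau$.

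The main obstacle is the right-hand side: we need $H(\tilde f_N(s)|f_*)\to H(f(s)|f_*)$, or at least $\limsup_N H(\tilde f_N(s)|f_*)\le H(f(s)|f_*)$. Lemma~\ref{Hsemicontcont}(ii) gives continuity only for $f_*=f_\phi$ with $\Lambda_\pm<\infty$. Our first approach is to use the splitting \eqref{Hsplit} with $-\log f_*$ in place of $-\log(\psi\phi^k/Z)$. The entropy part $\sum f\log f$ is continuous on $\poo$ by \cite[Lemma 4.2]{Ball}. For the part $\sum_k\tilde f_N(s,k)(-\log f_*(k))$ we need a uniform tail bound on $-\log f_*(k)$ against $\tilde f_N(s,k)$.

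If that route fails, a more robust alternative is to take limits in a different order, first $N\to\infty$ at time $s'$ close to $0$ rather than at $s$, exploiting that the inequality only needs to be stated at $s$. Concretely, one could restart the truncation at time $s$ with initial datum $f(s)^N/m(f(s)^N)$, using uniqueness and the semigroup property. Then
\begin{equation*}
H\big(f(s)^N/m(f(s)^N)\,\big|\,f_*\big)=\frac{1}{m_N}\sum_{|k|\le N}f(s,k)\log\frac{f(s,k)}{f_*(k)}-\log m_N,
\end{equation*}
with $m_N=m(f(s)^N)\to1$. When $H(f(s)|f_*)<\infty$, this tends to $H(f(s)|f_*)$ by dominated convergence, since $\sum|f\log(f/f_*)|<\infty$ whenever $H<\infty$ (the negative parts are summable because $\sum_k f_*\le 1$). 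This yields \eqref{hwlh} for all $0\le s\le t$, including $s=0$ directly.

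Finally, if $H(f_0|f_*)<\infty$, then \eqref{hwlh} with $s=0$ gives $H(f(t)|f_*)<\infty$ for all $t$. Applying \eqref{hwlh} again with $W\ge0$ (Remark~\ref{Wre}(i)) shows that $t\mapsto H(f(t)|f_*)$ is nonincreasing.
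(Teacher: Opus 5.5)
Your final argument is correct and is essentially the paper's proof. You restart the truncation at time $s$ with datum $f(s)^N/m(f(s)^N)$, use lower semicontinuity of $H(\cdot|f_*)$ at time $t$ and Fatou (in $(k,l)$, then in $\tau$) for the dissipation, and use $H(f(s)^N/m(f(s)^N)\,|\,f_*)\to H(f(s)|f_*)$ on the right; your dominated-convergence justification of that last limit fills in a step the paper only asserts. The first route (truncating from time $0$, which needs continuity of $H(\cdot|f_*)$ at time $s$) can simply be dropped, since the restart reduces every $s$ to the case $s=0$ applied to the initial datum $f(s)\in\poo$.
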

%**********************************************************************
\begin{proof}
    The argument is inspired by the proof of \cite[Theorem 4.8]{Ball}. 
    
    Let $s\geq 0$ and let $f_N$ be the solution to \eqref{IVPN} with initial value $\frac{f(s)^N}{m(f(s)^N)}$ for all $N\in\N$ with $m(f(s)^N)>0$ (which is always satisfied if $s>0$, due to Proposition~\ref{propfg0}; and for $s=0$, this is true for all sufficiently large $N$, since $m(f_0)=1$). Note that $m(f(s)^N)\to 1$ and thus $H(f_N(0)|f_*)\rightarrow H(f(s)|f_*)$ as $N\to \infty$. Moreover, Lemma~\ref{Hsemicontcont} and Remark~\ref{Wre} imply
    $H(f(t)|f_*)\leq\liminf_{N\rightarrow\infty}H(f_N(t-s)|f_*)$ and $W(f(\tau))\leq\liminf_{N\rightarrow\infty}W^N(f_N(\tau-s))$ for all $s\leq\tau\leq t$ with $\tau\neq0$. Hence, by Corollary~\ref{HWNint},
    \begin{align*}
        H(f(t)|f_*)\,+&\int_s^tW(f(\tau))\,d\tau\\ &\leq\liminf_{N\rightarrow\infty}H(f_N(t-s)|f_*)+\int_s^t \liminf_{N\rightarrow\infty}W^N(f_N(\tau-s))\,d\tau\\ &\leq \liminf_{N\rightarrow\infty}\lk H(f_N(t-s)|f_*)+\int_s^t W^N(f_N(\tau-s))\,d\tau\rk\\&= \liminf_{N\rightarrow\infty} H(f_N(0)|f_*)=H(f(s)|f_*).\qedhere
    \end{align*}
\end{proof}
%*************************************************************************
\begin{cor} \label{l11bound}
    Let $f_0\in\poo$ and $f$ be the corresponding solution to \eqref{P}. Then the trajectory $\{f(t):t\geq0\}$ of $f$ is bounded in $\wloz$.
\end{cor}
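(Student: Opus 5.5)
We plan to combine the Lyapunov property of relative entropy (Corollary~\ref{corHWleqH}) with the weighted entropy estimate of Proposition~\ref{propweightedbound}. The weight will be $h(k)=\lambda|k|$ for a suitable $\lambda>0$, and the reference state will be an equilibrium $f_*=f_\phi$ from the family of Definition~\ref{deffphi} that decays exponentially in both directions. The argument uses two ingredients: (a) $H(f_0|f_\phi)<\infty$, and (b) a bound $f_\phi(k)\le e^{-\lambda|k|}$ for large $|k|$. Once both are available, Corollary~\ref{corHWleqH} gives $H(f(t)|f_\phi)\le H(f_0|f_\phi)$ for all $t\ge0$. Proposition~\ref{propweightedbound} then yields
\[
\lambda\sum_{k\in\Z}|k|f(t,k)\le \lambda N_0+2H(f_0|f_\phi)+\sum_{k\in\Z}\lambda|k|e^{-\lambda|k|/2}+2 .
\]
The right-hand side does not depend on $t$. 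Together with $m(f(t))=1$, this bounds $\lno f(t)\rnooo$ uniformly in $t$.

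We choose $\phi\in\mathbf{I}_K$ arbitrarily; the interval is nonempty by ($\mathcal{E}$). By Remark~\ref{reequiwell}(iii), there exist $\varepsilon>0$ and $C>0$ such that $f_\phi(k)\le C r_+^k$ for $k\ge0$, where $r_+=(1+\varepsilon)(\kappa\Lambda_+)^{-1}\phi<1$. The analogous bound $f_\phi(k)\le C r_-^{|k|}$ with $r_-<1$ holds for $k\le 0$. If $\Lambda_+=\infty$, then $\tilde\psi(k)^{1/k}\to0$ by \eqref{psitildeinfty}, so $f_\phi(k)$ decays faster than any exponential; the same applies to $\Lambda_-$. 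In all cases we take $\lambda>0$ with $e^{-2\lambda}>\max(r_+,r_-)$. For $|k|\ge N_0$ large we then have $Cr_\pm^{|k|}\le e^{-\lambda|k|}$, which is \eqref{fstarbound}. The summability $\sum\lambda|k|e^{-\lambda|k|/2}<\infty$ is obvious.

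The main obstacle is (a). Lemma~\ref{lement1}(iii) gives finiteness of $H(f_0|f_\phi)$ only when $\Lambda_\pm<\infty$. We would therefore first treat the finite case with that lemma: $f_0\in\poo$ suffices there, because $\log(\tilde\psi(\pm k)^{1/k}\kappa^{-1}\phi^{\pm1})$ is bounded. If $\Lambda_+=\infty$ or $\Lambda_-=\infty$, then $-\log\psi(k)$ may grow superlinearly, and $H(f_0|f_\phi)$ may be infinite for a general $f_0\in\poo$. To handle this we would start the entropy argument at a positive time. For $t\in[0,1]$ the trajectory is bounded by Lemma~\ref{ddtabsqineq}, so it suffices to obtain a bound for $t\ge 1$. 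Proposition~\ref{propfg0} gives strict positivity, but not finite entropy. An alternative is a modified reference state: we would try to replace $f_\phi$ by a detailed-balance equilibrium of a comparison kernel. More simply, we could use the expansion \eqref{Hsplit} only where it is finite, checking that the second sum is bounded below (the bad terms enter $H$ with a favorable sign) and applying a variant of Proposition~\ref{propweightedbound}.

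If the paper's standing assumptions implicitly have $\Lambda_\pm<\infty$ (as in Lemma~\ref{lement1}(iii)), then the argument closes directly. Otherwise, the infinite case is the step we would need to resolve. For that case we would first try to show that $H(f(t)|f_\phi)<\infty$ for some $t>0$, using the $\ell_{1,1}$ bound together with lower bounds of the type given in Proposition~\ref{propfg0}(ii).
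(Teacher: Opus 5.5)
Your argument for the case $\Lambda_\pm<\infty$ is exactly the paper's proof. You fix $\phi\in\mathbf{I}_K$ and use the exponential tail of $f_\phi$ (Remark~\ref{reequiwell}(iii)) to get an admissible weight growing linearly in $|k|$. You then apply Proposition~\ref{propweightedbound} to $f(t)$ with $f_*=f_\phi$, bound $H(f(t)|f_\phi)\le H(f_0|f_\phi)$ by Corollary~\ref{corHWleqH}, and finish with mass conservation. Your choice $h(k)=\lambda|k|$ works.

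The obstacle you flag is genuine, and the paper does not resolve it either. Its proof simply uses $2H(f_0|f_\phi)<\infty$. Lemma~\ref{lement1}(iii) justifies this only when $\Lambda_\pm<\infty$, whereas ($\mathcal{E}$) allows $\Lambda_\pm=\infty$. The problem really occurs. Take $K(k,l)=z_l$ with $z_l=e^{-l^2}$ for $l\ge0$ and $z_l=1$ for $l<0$; this is Example~\ref{Kexag} with $S\equiv1$, $x_k\equiv1$, $c=1$. It satisfies ($\mathcal{B}$), ($\mathcal{P}$), ($\mathcal{DB}$) and ($\mathcal{E}$), with $\kappa=\Lambda_-=1$, $\Lambda_+=\infty$ and $\mathbf{I}_K=(1,\infty)$. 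For $k\ge1$ one has $-\log\psi(k)=\sum_{i=0}^{k-1}i^2\sim k^3/3$. Hence $f_0(k)\propto k^{-4}$ for $k\ge1$ (zero otherwise) lies in $\poo$, but $H(f_0|f_\phi)=\infty$ for every $\phi\in\mathbf{I}_K$.

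Your suggested repairs would not fix this:
\begin{itemize}
\item Changing the reference state does not help. By \eqref{gk2}, every detailed balance state has the form $\psi(k)\eta^k g(0)$, so finiteness of the entropy does not depend on the choice (cf.\ Lemma~\ref{lemfinent}).
\item Starting at a positive time does not help. Proposition~\ref{propfdec} gives $f(t,k)\ge e^{-2C_Kt}f_0(k)$, so the divergent tail persists and $H(f(t)|f_\phi)=\infty$ for every $t\ge0$. Lower bounds such as Proposition~\ref{propfg0}(ii) act in the wrong direction, since what you would need is an upper bound on the tail.
\item The ``favorable sign'' remark is backwards. The divergent part $-\sum_k f(k)\log\psi(k)$ enters $H$ as $+\infty$, which makes the upper bound in Proposition~\ref{propweightedbound} vacuous.
\end{itemize}

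So your proposal, like the paper's argument, proves the corollary under the extra hypothesis $\Lambda_\pm<\infty$. More generally, it works whenever $\sum_k f_0(k)\log\psi(k)$ converges. For general $f_0\in\poo$ with $\Lambda_+=\infty$ or $\Lambda_-=\infty$, some idea beyond the entropy bound is needed.
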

%********************************************************************
\begin{proof}
    By {\bf($\mathcal{E}$)}, for every $\phi\in\mathbf{I}_K$ there exist $h:\Z\rightarrow[0,\infty)$, $N_0\in\N$  and $\delta>0$ such that $f_\phi(k)\leq \exp(-h(k))$ and $h(k)\geq \delta|k|$ for all $k\in\Z$ with $|k|\geq N_0$. Thus, the assumptions of Proposition~\ref{propweightedbound} are met for all such $\phi$. Now, fix $\phi\in  \mathbf{I}_K$. Since the relative entropy $H(f(t)|f_\phi)$ is nonincreasing, we obtain that $\sup_{t\geq0}\lno f(t)\rnooo$ is bounded from above by
    \begin{align*}
         \frac{1}{\delta}\lk\max_{|k|<N_0}h(k)+2H(f_0|f_\phi)+\sum_{|k|\geq N_0}h(k)\exp\Big(-\frac{h(k)}{2}\Big)+2\rk+1<\infty,
    \end{align*}
    where we also use mass conservation.
\end{proof}
%********************************************************************
\begin{re}
    If $f_*\in\poo$ is an equilibrium of \eqref{ODE}, then $f_*$ satisfies the detailed balance condition \eqref{db}. In fact, from Remark \ref{reequipos} we already know that $f_*(k)>0$ for all $k\in \Z$. Next,
    note that for all $0\leq s\leq t$,
    \begin{align*}
        0\leq H(f_*|f_*)+\int_s^tW(f_*)\,d\tau\leq H(f_*|f_*)=0,
    \end{align*}
    and thus $W(f_*)=0$, which, by Remark~\ref{Wre}, implies that $f_*$ satisfies \eqref{db}. 
    
    In particular, this means that in case of Example~\ref{ExaFinMaxCharge}, we cannot expect convergence in $\wloz$ for all initial values in $\poo$. The reason for this is that convergence in $\wloz$ preserves total charge and for sufficiently large total charge, there is no equilibrium with this charge.
\end{re}
%*************************************************************************

%****************************************
\section{Stability}
%**************************************
The following theorem provides a useful weighted Csiszár-Kullback-Pinsker (CKP) inequality in a general setting. It constitutes a key ingredient in the proof of our stability result, Theorem \ref{StabTheorem}.
%*********************************************************************
\begin{theorem} \label{bvt}(\cite[Theorem 2.1]{BV}) 
    Let $\Omega$ be a measurable space, let $\mu,\nu$ be two probability measures on $\Omega$ and let $\varphi$  be a nonnegative measurable function on $\Omega$. Then
    \begin{align}
        \lVert \varphi\mu-\varphi\nu\rVert_{TV}\leq \lk \frac{3}{2}+\int_\Omega e^{2\varphi(x)}\,d\nu(x)\rk\lk\sqrt{H(\mu|\nu)}+\frac{1}{2}H(\mu|\nu)\rk.
    \end{align}
\end{theorem}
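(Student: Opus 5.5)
This is a cited result, \cite[Theorem 2.1]{BV}, so the plan is to reconstruct the Bolley--Villani argument by duality. Write $h=d\mu/d\nu$; if $\mu$ is not absolutely continuous with respect to $\nu$, then $H(\mu|\nu)=\infty$ and there is nothing to prove. Then
\[
\lVert \varphi\mu-\varphi\nu\rVert_{TV}=\int_\Omega \varphi\,|h-1|\,d\nu .
\]
Set $u=h-1\ge -1$, so that $\int u\,d\nu=0$. The goal is to control $\int\varphi|u|\,d\nu$ by $H(\mu|\nu)=\int (1+u)\log(1+u)\,d\nu$.

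\textbf{Step 1: a pointwise inequality.} Use the elementary bound
\[
(1+u)\log(1+u)-u\ \ge\ \frac{u^2}{2(1+u/3)},\qquad u\ge -1 .
\]
Note that $(1+u)\log(1+u)-u\ge0$ and that its integral equals $H(\mu|\nu)$, because $\int u\,d\nu=0$.

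\textbf{Step 2: Cauchy--Schwarz.} Split $|u|=\frac{|u|}{\sqrt{1+u/3}}\cdot\sqrt{1+u/3}$ and apply Cauchy--Schwarz:
\[
\int\varphi|u|\,d\nu\le\Big(\int \frac{u^2}{1+u/3}\,d\nu\Big)^{1/2}\Big(\int\varphi^2(1+u/3)\,d\nu\Big)^{1/2}
\le \sqrt{2H}\,\Big(\int\varphi^2\,d\nu+\tfrac13\int\varphi^2\,d\mu-\tfrac13\int\varphi^2\,d\nu\Big)^{1/2}.
\]
This leaves $\int\varphi^2\,d\mu$ on the right-hand side, which still has to be bounded.

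\textbf{Step 3: Young/Legendre duality.} Control $\int\varphi^2\,d\mu$ (or directly $\int\varphi\,d\mu$) through the entropy duality
\[
\int g\,d\mu\le H(\mu|\nu)+\log\int e^{g}\,d\nu .
\]
Taking $g=2\varphi$ brings in $\int e^{2\varphi}\,d\nu$. Since $\varphi^2\le e^{2\varphi}/2$ pointwise (or a similar crude bound), one obtains
\[
\int \varphi^2\,d\mu\lesssim\Big(\int e^{2\varphi}\,d\nu\Big)\,(1+H).
\]
Combining this with Step 2 gives a bound of the form $C\big(1+\int e^{2\varphi}\,d\nu\big)\big(\sqrt H+H\big)$.

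\textbf{Main obstacle: matching the constants.} Getting exactly $\tfrac32$, $1$ and $\tfrac12$ requires care. Here I would follow Bolley--Villani more closely. Instead of Cauchy--Schwarz, use the inequality $xy\le e^{x}-x-1+(1+y)\log(1+y)-y$ with scaling parameters, optimise over a scaling $\lambda$, and use $\sqrt{a+b}\le\sqrt a+\sqrt b$ to separate the $\sqrt H$ and $H$ terms. The precise bookkeeping is where I expect the difficulty. Since the paper only needs the qualitative form (a constant times $(1+\int e^{2\varphi}\,d\nu)(\sqrt H+H)$), citing \cite{BV} for the sharp constants is acceptable.
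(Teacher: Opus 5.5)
\textbf{The gap is in Step 3.} The paper gives no argument for this statement; it only quotes \cite[Theorem 2.1]{BV}, so falling back on the citation matches the paper. But your Steps 1--3, which you present as establishing at least the qualitative form $C\bigl(1+\int e^{2\varphi}\,d\nu\bigr)(\sqrt H+H)$, do not work, because Step 3 is false.

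The bound $\varphi^2\le e^{2\varphi}/2$ turns $\int\varphi^2\,d\mu$ into $\int e^{2\varphi}\,d\mu$, which is an integral against $\mu$, not $\nu$. The duality formula with $g=2\varphi$ only controls $\int\varphi\,d\mu$, linearly. Controlling $\int\varphi^2\,d\mu$ by duality would require a square-exponential moment $\int e^{c\varphi^2}\,d\nu$. With only $\int e^{2\varphi}\,d\nu<\infty$, the quantity can be infinite. Take $\Omega=\N_0$, $\nu(n)\propto e^{-3n}$, $\mu(n)\propto (n+1)^{-5/2}$ and $\varphi(n)=n$. Then:
\begin{itemize}
\item $\int e^{2\varphi}\,d\nu<\infty$;
\item $H(\mu|\nu)<\infty$, because $\log(\mu(n)/\nu(n))=3n+O(\log n)$ and $\sum_n n\mu(n)<\infty$;
\item $\int\varphi^2\,d\mu=\infty$.
\end{itemize}
So the right-hand side of your Step 2 is infinite, while the left-hand side of the theorem is finite.

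The root cause is Step 1. The lower bound $u^2/(2(1+u/3))$ grows only linearly for large $u$, so it throws away the logarithm in $(1+u)\log(1+u)-u$. That logarithm is exactly what has to be paired with an exponential moment of $\varphi$. Steps 1--2 are the natural route to a square-exponential-weight variant, not to this statement.

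\textbf{The constants are not the real obstacle.} The Young-inequality route you mention at the end proves the statement as written here (with $\int_\Omega e^{2\varphi}\,d\nu$) in a few lines, with room to spare. The cases $H\in\{0,\infty\}$ are trivial, so assume $0<H<\infty$.
\begin{itemize}
\item Set $\Phi(x)=e^x-1-x$, $\Phi^*(y)=(1+y)\log(1+y)-y$ and $u=d\mu/d\nu-1$. Fenchel--Young gives $\lambda\varphi|u|\le\Phi(\lambda\varphi)+\Phi^*(|u|)$.
\item The ingredient you are missing for the absolute value is $\Phi^*(|u|)\le\Phi^*(u)$ for $u\ge-1$. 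This holds because $\frac{d}{da}\bigl(\Phi^*(-a)-\Phi^*(a)\bigr)=-\log(1-a^2)\ge 0$ on $[0,1)$.
\item Since $\int u\,d\nu=0$, we have $\int\Phi^*(u)\,d\nu=H$.
\item For $\lambda\in(0,1]$, $\Phi(\lambda\varphi)\le\frac{\lambda^2}{2}\varphi^2e^{\varphi}\le 2e^{-2}\lambda^2e^{2\varphi}$, using $x^2e^{-x}\le 4e^{-2}$.
\end{itemize}
Write $T$ for the left-hand side, $M=\int e^{2\varphi}\,d\nu\ge 1$ and $a=2e^{-2}M$. Integrating gives $T\le H/\lambda+a\lambda$ for all $\lambda\in(0,1]$. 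Now take $\lambda=\min\{1,\sqrt{H/a}\}$:
\begin{itemize}
\item if $H\le a$, then $T\le 2\sqrt{aH}$;
\item if $H>a$, then $T\le H+a\le H+\sqrt{aH}$.
\end{itemize}
Since $2\sqrt a=\frac{2\sqrt2}{e}\sqrt M\le\sqrt{6M}\le\frac32+M$ by AM--GM, and $\frac32+M\ge 2$, both cases are bounded by $\bigl(\frac32+M\bigr)\bigl(\sqrt H+\frac12 H\bigr)$.
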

%**************************************************************************
\begin{re}
    Theorem~\ref{bvt} can also be used to obtain boundedness in $\wloz$ of solutions to \eqref{P} with $f_0\in \poo$, cf.\ Corollary \ref{l11bound}. In this context, we mention that a priori bounds via suitable weighted CKP inequalities were established in \cite[Section 2]{Nh02} for the Becker-Döring equations. 
\end{re}
%***************************************************************************
\begin{theorem} \label{StabTheorem}
Assume {\bf($\mathcal{B}$), ($\mathcal{P}$), ($\mathcal{DB}$), ($\mathcal{E}$)} and $\Lambda_\pm<\infty$. Then for all $\phi\in  \mathbf{I}_K$, the equilibrium $f_\phi$ is stable in $\poo$, that is, for every $\varepsilon>0$ there exists $\delta>0$ such that for all $f_0\in \poo$ satisfying $\lno f_0-f_*\rnooo<\delta$, we have $\lno f(t)-f_*\rnooo<\varepsilon$ for all $t\geq0$, where $f$ is the solution to \eqref{P} with initial value $f_0$.
\end{theorem}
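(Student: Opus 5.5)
The argument combines three ingredients: monotonicity of the relative entropy (Corollary \ref{corHWleqH}), continuity of $H(\,\cdot\,|f_\phi)$ on $\poo$ in the $\wloz$-norm (Lemma \ref{Hsemicontcont} (ii), available since $\Lambda_\pm<\infty$), and the weighted CKP inequality of Theorem \ref{bvt}. Throughout, $f_*=f_\phi$ with $\phi\in\mathbf{I}_K$ fixed.

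\emph{Step 1: choice of weight.} We apply Theorem \ref{bvt} on $\Omega=\Z$ with $\mu=f(t)$, $\nu=f_\phi$ and the weight $\varphi(k)=\alpha\log(1+|k|)+\beta|k|$. The simplest admissible choice is $\varphi(k)=\beta|k|$ with $\beta>0$ small. The requirement is $\sum_k e^{2\beta|k|}f_\phi(k)<\infty$. By Remark \ref{reequiwell} (iii), $f_\phi$ decays exponentially on both sides: on $\N_0$ with rate $(1+\varepsilon)\phi/(\kappa\Lambda_+)<1$, and analogously on the negative integers. Hence this sum is finite once $2\beta$ is below the smaller of the two decay exponents. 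Then
\[
\sum_{k}\beta|k|\,|f(t,k)-f_\phi(k)|\le C_\beta\Big(\sqrt{H(f(t)|f_\phi)}+\tfrac12 H(f(t)|f_\phi)\Big).
\]
Adding the unweighted CKP bound (the case $\varphi\equiv 1$, or Pinsker's inequality) controls the $\loz$-part as well. Together this gives
\[
\lno f(t)-f_\phi\rnooo\le C\big(\sqrt{H}+H\big),\qquad H=H(f(t)|f_\phi),
\]
with $C$ depending only on $\phi$ and $K$.

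\emph{Step 2: entropy is nonincreasing.} For $f_0\in\poo$, Lemma \ref{lement1} (iii) gives $H(f_0|f_\phi)<\infty$. Since $f_\phi$ is a detailed balance equilibrium in $\po$, Corollary \ref{corHWleqH} applies and yields $H(f(t)|f_\phi)\le H(f_0|f_\phi)$ for all $t\ge0$.

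\emph{Step 3: continuity at $f_\phi$.} Given $\varepsilon>0$, choose $\eta>0$ with $C(\sqrt\eta+\eta)<\varepsilon$. By Lemma \ref{Hsemicontcont} (ii) and $H(f_\phi|f_\phi)=0$, there is $\delta>0$ such that every $f_0\in\poo$ with $\lno f_0-f_\phi\rnooo<\delta$ satisfies $H(f_0|f_\phi)<\eta$. Combining Steps 1 and 2 then gives $\lno f(t)-f_\phi\rnooo<\varepsilon$ for all $t\ge0$, which is the claimed stability.

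\emph{Main obstacle.} The only delicate point is Step 1: the exponential weight must be integrable against $f_\phi$, which requires strict exponential decay of $f_\phi$ on both sides. This holds precisely because $\phi$ lies in the open interval $\mathbf{I}_K$ and $\Lambda_\pm<\infty$. One also has to pass from the weight $\beta|k|$ to the weight $1+|k|$ in the $\wloz$-norm; this is handled by adding the Pinsker bound for the constant part and dividing by $\beta$.
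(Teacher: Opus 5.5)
Your proposal is correct and follows essentially the same route as the paper: the weighted CKP inequality (Theorem~\ref{bvt}) with a linear weight that is exponentially integrable against $f_\phi$, monotonicity of $H(\,\cdot\,|f_\phi)$ from Corollary~\ref{corHWleqH}, and continuity of the entropy at $f_\phi$ from Lemma~\ref{Hsemicontcont}(ii). The only difference is cosmetic: the paper uses the single weight $\varphi(k)=\alpha(1+|k|)$, which controls the full $\wloz$-norm in one application, whereas you use $\beta|k|$ and add a Pinsker bound for the constant part.
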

%************************************************************************
\begin{proof}
    Let $\phi\in  \mathbf{I}_K$. Choose $\alpha>0$ such that $\skz e^{2\alpha(1+|k|)}f_\phi(k)<\infty$, which is possible according to Remark~\ref{reequiwell}. Define $\varphi:\Z\rightarrow (0,\infty)$ by $\varphi(k)= \alpha(1+|k|)$.

    Let $\varepsilon>0$. By the continuity of $H(\,\cdot\,|f_\phi)$ on $\poo$ (Lemma~\ref{Hsemicontcont}) and $H(f_\phi|f_\phi)=0$, it follows that for every $\delta_2>0$ there exists $\delta_1>0$ such that for all $f_0\in\poo$ with $\lno f_0-f_\phi\rnooo<\delta_1$ we also have $H(f_0|f_\phi)<\delta_2$. 
    Applying Theorem~\ref{bvt} yields
    \begin{align*}
        \alpha &\lno f(t)-f_\phi\rnooo=\lno\varphi f(t) -\varphi f_\phi\rno_{TV}\\ &\leq \lk \frac{3}{2}+\skz e^{2\alpha(1+|k|)}f_\phi(k)\rk\lk\sqrt{H(f(t)|f_\phi)}+\frac{1}{2}H(f(t)|f_\phi)\rk,\quad t\ge 0.
    \end{align*}
    As the relative entropy is nonincreasing along solutions to \eqref{P} (Corollary~\ref{corHWleqH}), we infer that 
    \begin{align}\label{stabeq}
        \alpha\lno f(t)-f_\phi\rnoo<\lk \frac{3}{2}+\skz e^{2\alpha(1+|k|)}f_\phi(k)\rk\frac{\sqrt{\delta_2}+\frac{1}{2}\delta_2}{\alpha},\quad t\ge 0.
    \end{align}
    Hence, choosing $\delta_1$ so small that the right-hand side of \eqref{stabeq} is smaller than $\varepsilon$, yields the assertion.
\end{proof}
%****************************************************************
\begin{re}
If $f_{\phi_+}$ ($f_{\phi_-}$) is well-defined and lies in $\poo$, then $f_{\phi_+}$ ($f_{\phi_-}$) is stable in $\poo$ by the same argument, since the corresponding relative entropy is also continuous on $\poo$, thanks to Lemma~\ref{Hsemicontcont}.
\end{re}

\appendix
\section{Appendix}
%**********************************************************
\subsection{On the absolute total charge for the discrete heat equation}
%**********************************************************
The solution $f$ of the discrete heat equation on $\iZ$, equation \eqref{DiscreteHeat}, with initial value $f(0,\cdot)=f_0\in \wloz$ can be represented by
\[
f(t,k)=\sum_{l\in\iZ}G(t,k-l)f_0(l),\quad t\ge 0,\,k\in \iZ,
\]
where
\begin{equation} \label{Gformula}
G(t,k)=\frac{e^{-2t}}{2\pi}\int_{-\pi}^\pi e^{2t\cos \theta}e^{-ik\theta}\,d\theta,\quad t\ge 0,\,k\in \iZ, 
\end{equation}
see \cite{AGMP} and the proof of \cite[Prop.\ 1]{CGRTV}. Assuming that $f_0$ is nonnegative, we have $f(t,0)\ge f_0(0)G(t,0)$, and thus, in view of \eqref{DerivAbsCharge},
\begin{align*}
    |q|(f(t,\cdot))\ge |q|(f_0)+2f_0(0)\int_0^t G(\tau,0)\,d\tau,\quad t\ge 0.
\end{align*}
Using \eqref{Gformula} and Tonelli's theorem, we further have
\begin{align*}
\int_0^t G(\tau,0)\,d\tau= \frac{1}{2\pi}\int_{-\pi}^{\pi}
\int_0^t e^{2\tau(\cos \theta-1)}\,d\tau\,d\theta
=\frac{1}{4\pi}\int_{-\pi}^{\pi}\frac{1-e^{-2t(1-\cos \theta)}}{1-\cos \theta} \,d\theta\to \infty
\end{align*}
as $t\to \infty$. This shows that $|q|(f(t,\cdot))\to \infty$
as $t\to \infty$ whenever $f_0(0)>0$. Since $f$ becomes
instantaneously positive on $\iZ$ for any nonnegative $f_0\in \wloz\setminus\{0\}$, we see that for any such initial value, the corresponding absolute total charge tends to infinity as $t\to \infty$.   
%******************************************************
\subsection{Preservation of positivity due to quasi-positivity}
\begin{proposition}\label{quasipositivitypositivitypreserving}
    Let $X\in\{\loz,\wloz\}$, 
    $T>0$, and let $F:X\rightarrow X$ be locally Lipschitz continuous, i.e.
    \begin{gather} \label{LocLip}
    \begin{aligned}
        &\forall y_0\in X\; \exists r,L>0:\quad\lno F(x)-F(y)\rno_X\leq L\lno x-y\rno_X\;\; \forall x,y\in B_r(y_0).
    \end{aligned}
    \end{gather}
    Moreover, assume that $F$ is quasi-positive, that is,
    \begin{gather*}
        \begin{aligned}
            \forall k\in \Z\;\forall y\in X^+:\quad \lke y(k)=0\Rightarrow F(y)(k)\geq 0\;\; \forall t\in [0,T)\rke.
        \end{aligned} 
    \end{gather*}
    If $f_0\in X^+$ and $f\in C^1([0,T);X)$ satisfies 
    $\dot{f}(t)=F(f(t))$ on $[0,T)$ and $f(0)=f_0$, then $f(t)\in X^+$ for all $t\in[0,T)$.
\end{proposition}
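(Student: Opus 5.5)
The plan is to deduce positivity from a perturbation argument: first handle a strictly inward-pointing modification of $F$, then pass to the limit using local Lipschitz continuity. Fix $T'\in(0,T)$. Since $f([0,T'])$ is compact in $X$, the local Lipschitz condition \eqref{LocLip} together with a covering argument gives $r>0$ and $L>0$ such that $F$ is $L$-Lipschitz on the $r$-neighbourhood $U$ of $f([0,T'])$. For $\varepsilon>0$ small, define the perturbed problem
\[
\dot f_\varepsilon(t)=F(f_\varepsilon(t))+\varepsilon e,\qquad f_\varepsilon(0)=f_0+\varepsilon e,
\]
where $e(k)=w(k)^{-1}2^{-|k|}$, with $w\equiv1$ if $X=\loz$ and $w(k)=1+|k|$ if $X=\wloz$. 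Then $e\in X$ and $e(k)>0$ for all $k$. By standard ODE stability in Banach spaces (Gronwall on $U$), for $\varepsilon$ small the solution $f_\varepsilon$ exists on $[0,T']$, stays in $U$, and satisfies $\sup_{[0,T']}\lno f_\varepsilon-f\rno_X\le C\varepsilon\to0$. Since convergence in $X$ implies pointwise convergence, it suffices to show $f_\varepsilon(t)\in X^+$ on $[0,T']$.

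The main step is this positivity claim. Suppose it fails, and let $t_*=\inf\{t: f_\varepsilon(t,k)<0\text{ for some }k\}$. The obstacle is that in infinite dimensions there are infinitely many coordinates, so a first coordinate to hit zero need not exist, and the infimum could be approached by coordinates $k\to\pm\infty$. I handle this quantitatively rather than by a touching argument. Write $f_\varepsilon(t)=y+z$, where $y=f_\varepsilon(t)^+$ is the positive part, which lies in $X^+$. The map $g\mapsto g^+$ is $1$-Lipschitz in $X$, so $\lno f_\varepsilon(t)-y\rno_X=\lno f_\varepsilon(t)^-\rno_X$.

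For a coordinate $k$ with $f_\varepsilon(t,k)\le0$ we have $y(k)=0$, and quasi-positivity gives $F(y)(k)\ge0$. Hence
\[
\partial_t f_\varepsilon(t,k)\ \ge\ \varepsilon e(k)-|F(f_\varepsilon(t))(k)-F(y)(k)|.
\]
Multiplying by $w(k)$ and summing over $k\in N(t):=\{k: f_\varepsilon(t,k)<0\}$, I look at $\phi(t)=\lno f_\varepsilon(t)^-\rno_X=\sum_k w(k)f_\varepsilon(t,k)^-$. This function is absolutely continuous with $\phi'(t)=-\sum_{k\in N(t)}w(k)\partial_tf_\varepsilon(t,k)$ a.e.; this can be justified via Lebesgue differentiation of the convex functional $g\mapsto\sum w(g)^-$ along a $C^1$ curve. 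Using the Lipschitz bound this yields
\[
\phi'(t)\le L\lno f_\varepsilon(t)-y\rno_X=L\phi(t),
\]
with $\phi(0)=0$ because $f_0+\varepsilon e\ge0$. Gronwall then gives $\phi\equiv0$ on $[0,T']$.

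In fact this shows the perturbation $\varepsilon e$ is not even needed: one can run the same $\phi$-Gronwall argument directly on $f$, using only the Lipschitz property on a neighbourhood of $f([0,T'])$ together with the fact that $y=f(t)^+$ lies in that neighbourhood. That holds because $\lno f(t)-y\rno_X=\phi(t)$ stays small for $t$ near any time where $\phi=0$, so a continuation/connectedness argument on the set $\{t:\phi(t)=0\}$ closes it. I would present this direct version, keeping the delicate point the a.e. derivative formula for $\phi$. To prove that formula, I would apply the one-dimensional inequality $\frac{d}{dt}a(t)^-\le -\mathbb 1_{\{a<0\}}\dot a$ coordinatewise, integrate in time, and sum using dominated convergence. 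This is legitimate since $\sum_k w(k)\sup_t|\partial_tf(t,k)|$ is controlled on compacts by $\dot f\in C([0,T'];X)$. Working in integrated form also avoids differentiability issues. Letting $T'\uparrow T$ finishes the proof.
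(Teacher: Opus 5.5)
Your argument is correct, but it takes a genuinely different route from the paper.

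The paper quotes Volkmann's flow-invariance theorem for closed convex sets and only checks its tangency condition. If $0\neq\varphi\in X^*$ attains its infimum over $X^+$ at $x$, then $\varphi(x)=0$ and $\varphi\ge 0$ on $X^+$. Moreover, $\varphi(e_k)>0$ forces $x(k)=0$. Quasi-positivity together with $\varphi(F(x))=\sum_k F(x)(k)\varphi(e_k)$ then gives $\varphi(F(x))\ge 0$. The infinite-dimensional difficulty you point out (no first coordinate that hits zero) is entirely delegated to that theorem.

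You instead prove invariance by hand, with a Gronwall estimate for the distance to the cone, which here equals $\phi(t)=\|f(t)^-\|_X=\sum_k w(k)f(t,k)^-$. This uses three facts: $f(t)^+\in X^+$ is admissible in the quasi-positivity condition, $\|f(t)-f(t)^+\|_X=\phi(t)$, and $\phi$ is a sum of coordinatewise terms, so the scalar chain rule for $a\mapsto a^-$ can be summed. Your route is elementary, self-contained and quantitative, but it relies on the additive structure of the weighted $\ell_1$-norm. The paper's route is shorter given the citation and applies to any closed convex set for which the supporting-functional condition can be verified. The $\varepsilon e$-perturbation in your first part is, as you note yourself, superfluous.

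Two details should be fixed when you write it up.

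First, the domination you invoke is false in general: $\sum_k w(k)\sup_t|\partial_t f(t,k)|$ need not be finite for a continuous curve in $X$. A counterexample is the piecewise linear curve in $\ell_1(\mathbb{Z})$ that equals $0$ at $t=0$ and $e_n/n$ at $t=1/n$. You do not need this bound. The quantity $\phi(t)-\phi(s)$ is a difference of two convergent series, and each term is at most $w(k)\int_s^t|F(f(\tau))(k)-F(f(\tau)^+)(k)|\,d\tau$. Tonelli for nonnegative integrands then gives $\phi(t)-\phi(s)\le\int_s^t\|F(f(\tau))-F(f(\tau)^+)\|_X\,d\tau$.

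Second, this one-sided Gronwall inequality only propagates vanishing forward in time. So the continuation argument should be run on $t^*=\sup\{t\le T' : \phi\equiv 0 \text{ on } [0,t]\}$, using $\phi<r$ on a short interval after $t^*$. It cannot be run via openness of the set $\{t:\phi(t)=0\}$, which you cannot establish backward in time.
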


For the proof of Proposition~\ref{quasipositivitypositivitypreserving} we use Theorem 1 in \cite{Volkmann}. The relevant part is the following.
\begin{theorem}\label{ThmV}
    Let $C$ be a nonempty, closed and convex subset of a Banach space $E$, and let $F:\,E\rightarrow E$ be locally Lipschitz continuous (cf.\ \eqref{LocLip}).
    Furthermore, assume that $F$ satisfies the following condition.
        \begin{equation*}
            \varphi\in E^*,\varphi\neq0,\,x\in C\text{ and }\varphi(x)=\inf_{y\in C}\varphi(y)\;\Rightarrow\;\varphi(F(x))\geq0.
        \end{equation*}
    Let $0<T\leq\infty$ and $f\in C^1([0,T);E)$ be such that $f(0)\in C$ and $\dot{f}(t)=F(f(t))$ for all $0<t<T$. Then 
    \begin{align*}
        f(t)\in C \,\text{ for all }\; t\in[0,T).
    \end{align*}
\end{theorem}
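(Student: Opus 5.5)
The plan has two parts. First, convert the dual condition on $F$ into a tangency condition. Second, run a Nagumo--Brezis--Martin type invariance argument, in the version for locally Lipschitz vector fields on Banach spaces.

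\emph{Step 1 (tangency).} For $x\in C$ let $T_C(x)$ be the closed convex cone $\overline{\bigcup_{\lambda>0}\lambda(C-x)}$. I claim $F(x)\in T_C(x)$. Suppose not. By Hahn--Banach there is $\varphi\in E^*$ with $\varphi(F(x))<0\le\varphi(v)$ for all $v\in T_C(x)$; in particular $\varphi\neq 0$. Taking $v=y-x$ with $y\in C$ gives $\varphi(y)\ge\varphi(x)$, so $\varphi(x)=\inf_{y\in C}\varphi(y)$. The hypothesis then forces $\varphi(F(x))\ge 0$, a contradiction. Since $C$ is convex, $T_C(x)$ coincides with the Bouligand cone, and this yields
\[
\lim_{h\downarrow 0}\frac{1}{h}\,\mathrm{dist}\bigl(x+hF(x),C\bigr)=0\quad\text{for every }x\in C.
\]
To see this, write $F(x)=\lim_n\lambda_n(y_n-x)$ with $y_n\in C$. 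For $h\lambda_n\le 1$ the point $x+h\lambda_n(y_n-x)$ lies in $C$ by convexity. Hence $\mathrm{dist}(x+hF(x),C)\le h\,\|F(x)-\lambda_n(y_n-x)\|$.

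\emph{Step 2 (continuation argument).} Let $t_0:=\sup\{t\in[0,T): f([0,t])\subset C\}$ and suppose $t_0<T$. Since $C$ is closed, $y_0:=f(t_0)\in C$. Fix $r,L$ from \eqref{LocLip} at $y_0$, and fix $\tau>0$ so small that $f([t_0,t_0+\tau])\subset B_{r/2}(y_0)$ and $M:=\sup_{B_r(y_0)}\|F\|<\infty$, shrinking $r$ if necessary. For a given $\varepsilon>0$ I will construct an $\varepsilon$-approximate solution $u_\varepsilon:[t_0,t_0+\tau]\to C\cap B_r(y_0)$. It will be a piecewise linear (Euler) polygon with nodes $s_0=t_0<s_1<\dots$, node values $u_\varepsilon(s_i)\in C$, and the property that on $[s_i,s_{i+1})$ the slope $v_i$ satisfies $\|v_i-F(u_\varepsilon(s_i))\|\le\varepsilon$ and $\|F(u_\varepsilon(s))-F(u_\varepsilon(s_i))\|\le\varepsilon$. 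The step $s_{i+1}-s_i$ is chosen via Step 1. If $\mathrm{dist}(x+hF(x),C)\le\varepsilon h/2$, pick $z\in C$ with $\|x+hF(x)-z\|\le\varepsilon h$ and set $v_i=(z-x)/h$. The step must also be short enough that continuity of $F$ keeps $F$ within $\varepsilon$ along the segment; segments stay in $C$ by convexity. To reach $t_0+\tau$ in finitely many or countably many steps, use the usual device of taking the maximal admissible step, or at least half of the supremum of admissible steps. If the nodes accumulated at some $s^*<t_0+\tau$, then the bound $\|v_i\|\le M+\varepsilon$ makes $u_\varepsilon$ Cauchy. Its limit lies in $C$, and restarting from there contradicts the half-maximal choice of steps. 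A transfinite/Zorn version of this argument also works.

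\emph{Step 3 (comparison).} On $[t_0,t_0+\tau]$ we have, almost everywhere, $\|\dot u_\varepsilon-F(u_\varepsilon)\|\le 2\varepsilon$, $\dot f=F(f)$, and $u_\varepsilon(t_0)=f(t_0)$. Both curves stay in $B_r(y_0)$, so Gronwall gives $\|f(t)-u_\varepsilon(t)\|\le 2\varepsilon\tau e^{L\tau}$. Hence $\mathrm{dist}(f(t),C)\le 2\varepsilon\tau e^{L\tau}$ for every $\varepsilon>0$, so $f(t)\in C$ on $[t_0,t_0+\tau]$. This contradicts the definition of $t_0$, and therefore $f(t)\in C$ for all $t\in[0,T)$.

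The main obstacle is Step 2: the tangency limit in Step 1 is not uniform in $x$. That is why I do not try to differentiate $t\mapsto\mathrm{dist}(f(t),C)$ directly, since the near-best points $c\in C$ vary with $h$ and there is no projection in a general Banach space. Instead I build approximate solutions inside $C$, where tangency is only ever used at a single fixed node. The delicate point is ruling out accumulation of the step sizes, which I handle with the half-maximal step rule and the uniform bound $M$ on $F$ near $y_0$.
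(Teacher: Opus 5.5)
Your proof is correct, and it takes a genuinely different route from the paper. The paper does not prove this statement at all: it quotes it as Theorem~1 of Volkmann's paper. It only verifies, in the proof of Proposition~\ref{quasipositivitypositivitypreserving}, that quasi-positivity implies the support-functional condition for $C=X^+$.

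You instead give a self-contained Nagumo--Brezis--Martin argument:
\begin{itemize}
\item \emph{Step~1.} Hahn--Banach converts the dual hypothesis into the tangency condition $F(x)\in T_C(x)$. The nice point is that a functional separating $F(x)$ from the cone $T_C(x)$ automatically attains $\inf_C\varphi$ exactly at $x$. So the hypothesis applies verbatim, with no approximation of support functionals.
\item \emph{Step~2.} Euler polygons built inside $C$ give $\varepsilon$-approximate solutions. Completeness of $E$ and closedness of $C$ rule out accumulation of nodes.
\item \emph{Step~3.} The local Lipschitz bound and Gronwall place the given solution within $O(\varepsilon)$ of $C$.
\end{itemize}
The citation keeps the appendix short. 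Your route shows where each hypothesis enters: convexity in Step~1 and in keeping segments inside $C$, completeness in Step~2, and the Lipschitz condition in the step-size control and in Step~3.

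Two details should be tightened.
\begin{itemize}
\item Also require $(M+1)\tau<r$ with $\varepsilon\le 1$. Otherwise nothing keeps the polygon inside $B_r(y_0)$, where you use $\|v_i\|\le M+\varepsilon$ and the Lipschitz estimate.
\item Use the Lipschitz constant rather than mere continuity, and cap every step by $\varepsilon/(L(M+1))$. Then the admissible-step set $A_i$ at a node $u_i$ is determined by $\mathrm{dist}(u_i+hF(u_i),C)\le\varepsilon h/2$, this cap, and the remaining time. It no longer depends on the choice of $z$.
\end{itemize}
The accumulation step then becomes precise. Suppose $s_i\uparrow s^*<t_0+\tau$; then $u_i\to u^*\in C$. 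Pick $h^*$ below the cap and below $(t_0+\tau-s^*)/2$ with $\mathrm{dist}(u^*+h^*F(u^*),C)\le\varepsilon h^*/4$. Since $x\mapsto\mathrm{dist}(x+h^*F(x),C)$ is continuous, $h^*\in A_i$ for large $i$. This contradicts $\sup A_i\le 2h_i\to 0$.
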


\begin{proof}[Proof of Proposition~\ref{quasipositivitypositivitypreserving}] Let $C=X^+$.
    According  to Theorem~\ref{ThmV} it suffices to show that whenever 
    \begin{align}\label{Volkmannbedingung}
        \varphi\in X^*,\,\varphi\not\equiv0,\,x\in C,\,\varphi(x)=\inf_{y\in C}\varphi(y),
    \end{align}
    then we also have
    \begin{align}\label{volkmannvarfgeq0}
        \varphi(F(x))\geq0.
    \end{align}
    Assume \eqref{Volkmannbedingung} holds. We claim that
    \begin{enumerate}[(a)]
        \item\label{apositivityconverved} $\varphi(x)=0$.
        \item\label{bpositivityconverved} $\varphi(y)\geq0$ for all $y\in C$.
    \end{enumerate}
    Indeed, since $0\in C$ and $\varphi\in X^*$, we clearly have $\varphi(x)\leq\varphi(0)=0$. Now suppose $\varphi(x)<0$. Then it follows that $\varphi(2x)=\varphi(x)+\varphi(x)<\varphi(x)$, which is a contradiction as $2x\in C$. This shows \eqref{apositivityconverved}.
    \eqref{bpositivityconverved} follows directly from \eqref{apositivityconverved}.
    
    Next, we prove that
    \begin{align*}
        (F(x))(k) \varphi(e_k)\geq0,\quad k\in \Z,
    \end{align*}
    where $e_k$ is the $k$-th unit vector, i.e. $e_k(l)=\delta_{kl}$ (Kronecker symbol).
    To this end, let $k\in \Z$. We distinguish two cases.
    If $\varphi(e_k)=0$, then it clearly follows that $F(x)(k) \varphi(e_k)=0$. Suppose now that
    $\varphi(e_k)\neq0$. $x\in C$ implies $x(k)\geq0$. Suppose $x(k)>0$. 
    Then, together with $x-x(k)e_k\in C$, this yields 
    \begin{align*}
        \varphi\big(x-x(k)e_k\big)=\varphi(x)-x(k)\varphi(e_k)<\varphi(x),
    \end{align*}
    a contradiction. Therefore, we have $x(k) = 0$ and thus, by the quasi-positivity of $F$, $F(x)(k)\geq0$. From $e_k\in C$ and \eqref{bpositivityconverved} we infer $F(x)(k)\varphi(e_k)\geq0$.
    
    By continuity of $\varphi$, we conclude
    \begin{align*}
        \varphi( F(x))=\sum_{k\in \Z}F(x)(k)\varphi(e_k)\geq0,
    \end{align*}
    which is exactly \eqref{volkmannvarfgeq0}.
\end{proof}

\subsection{Precompactness in sequence spaces}

The following proposition provides a compactness criterion for subsets of $\wloz$, which is known as Lemma of de La Vallée-Poussin. A corresponding result including proof for $\ell_{1,1}(\N_0)$ can be found in
\cite[Lemma A.1]{EDG}. The proof given there can be easily adapted to $\wloz$. 
%****************************************************************
\begin{proposition}\label{valleepoussin}
    Let $S\subset\wloz$. Then the following statements are equivalent.
    \begin{enumerate}[(i)]
        \item\label{vp1} $S$ is a precompact subset of $(\wloz,\lno\cdot\rno_{1,1})$.
        \item\label{vp2} $S$ is bounded in $(\wloz,\lno\cdot\rno_{1,1})$ and  \begin{align*}
            \lim_{N\rightarrow\infty}\sup_{f\in S}\sum_{|k|>N}(1+|k|)|f(k)|=0.
        \end{align*}
        \item\label{vp3} There exists a positive sequence $x:\N_0\rightarrow(0,\infty)$ such that
        \begin{itemize}
            \item $\frac{x(k)}{(1+k)}$ is nondecreasing,
            \item $\frac{x(k)}{(1+k)}\rightarrow\infty$ as $k\rightarrow\infty$, and
            \item $\displaystyle\sup_{f\in S}\sum_{k\in\Z}x(|k|)|f(k)|<\infty.$
        \end{itemize}
    \end{enumerate}
\end{proposition}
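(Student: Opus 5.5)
We prove the de La Vallée-Poussin criterion via the cycle (i)$\Rightarrow$(ii)$\Rightarrow$(iii)$\Rightarrow$(ii)$\Rightarrow$(i). The argument follows the one-sided case \cite[Lemma A.1]{EDG}; the only change is that tails are taken over $|k|>N$ instead of $k>N$.

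\emph{(i)$\Rightarrow$(ii).} Boundedness is immediate from precompactness. For the tail condition, fix $\varepsilon>0$ and cover $S$ by finitely many $\lno\cdot\rnooo$-balls of radius $\varepsilon/2$ centred at $g_1,\dots,g_m\in\wloz$. Each single $g_i$ has vanishing weighted tail, so we can choose $N$ with $\sum_{|k|>N}(1+|k|)|g_i(k)|<\varepsilon/2$ for every $i$. For $f\in S$ pick $g_i$ with $\lno f-g_i\rnooo<\varepsilon/2$. The triangle inequality, applied to the restricted sum, then gives a tail of $f$ smaller than $\varepsilon$, uniformly in $f\in S$.

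\emph{(ii)$\Rightarrow$(iii).} This is the main step, and we construct $x$ by hand. Set
\[
\tau(N):=\sup_{f\in S}\sum_{|k|>N}(1+|k|)|f(k)|,
\]
which decreases to $0$. Choose integers $N_1<N_2<\cdots$ with $\tau(N_j)\le 2^{-j}$, and define $w(k):=1+\#\{j:N_j<k\}$. Then $w$ is nondecreasing and $w(k)\to\infty$. Put $x(k):=(1+k)w(k)$, so that $x(k)/(1+k)=w(k)$ has both required monotonicity and growth properties. For the uniform bound we write
\[
\sum_{k}x(|k|)|f(k)|=\lno f\rnooo+\sum_j\sum_{|k|>N_j}(1+|k|)|f(k)|\le \sup_{g\in S}\lno g\rnooo+\sum_j 2^{-j}.
\]
The right-hand side is finite and independent of $f$. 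The interchange of sums is justified because all terms are nonnegative.

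\emph{(iii)$\Rightarrow$(ii).} Since $x(k)/(1+k)$ is nondecreasing and diverges, it is bounded below by some $c>0$, which gives boundedness of $S$. For the tail,
\[
\sum_{|k|>N}(1+|k|)|f(k)|\le \frac{1+N}{x(N)}\sup_{g\in S}\sum_k x(|k|)|g(k)|,
\]
using monotonicity of $x(k)/(1+k)$. The prefactor $(1+N)/x(N)$ tends to $0$, so the tails vanish uniformly.

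\emph{(ii)$\Rightarrow$(i).} We show $S$ is totally bounded. Given $\varepsilon>0$, choose $N$ with $\tau(N)<\varepsilon/3$. The truncations $\{f^N:f\in S\}$ form a bounded subset of the finite-dimensional space $\R^{S_N}$, so they admit a finite $\varepsilon/3$-net in $\lno\cdot\rnooo$. For $f\in S$ we have $\lno f-f^N\rnooo<\varepsilon/3$. Combining the two estimates, the net points form a finite $\varepsilon$-net for $S$. Since $\wloz$ is complete, total boundedness gives precompactness.

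The only genuinely constructive step is (ii)$\Rightarrow$(iii). The delicate point there is making $x(k)/(1+k)$ monotone while keeping the weighted sums bounded; using the counting function $w$ built from the sequence $N_j$ handles both at once.
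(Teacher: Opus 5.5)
Your proof is correct. It is the standard de La Vallée-Poussin argument that the paper invokes by adapting \cite[Lemma A.1]{EDG} from $\ell_{1,1}(\N_0)$ to $\wloz$, with one-sided tails replaced by tails over $|k|>N$, and all four implications go through as you wrote them: the counting-function construction of $x$ in (ii)$\Rightarrow$(iii) and the truncation/total-boundedness argument for (ii)$\Rightarrow$(i) included.
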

%**********************************************************
\noindent{\bf Acknowledgement:} The authors are grateful to Andr\'{e} Schlichting for several fruitful and stimulating discussions that contributed to the development of this work.
%**********************************************************
%*******************************************************************

\end{document}